\newtheoremstyle{newplain}
  {8pt}
  {8pt}
  {\itshape}  
  {}       
  {\rmfamily\scshape} 
  {.}         
  {5pt plus 1pt minus 1pt} 
  {}          
\theoremstyle{newplain}
\newtheorem*{theorem*}{Theorem}
\newtheorem{theorem}{Theorem}[section]
\newtheorem{definition}[theorem]{Definition}
\newtheorem{proposition}[theorem]{Proposition}
\newtheorem{corollary}[theorem]{Corollary}
\newtheorem{lemma}[theorem]{Lemma}
\theoremstyle{remark}
\DeclareMathOperator{\GL}{GL}
\DeclareMathOperator{\Aut}{\mathrm{Aut}}
\DeclareMathOperator{\Hom}{Hom}
\DeclareMathOperator{\Der}{Der}
\DeclareMathOperator{\ad}{Ad}
\DeclareMathOperator{\Ima}{Im}
\DeclareMathOperator{\id}{id}
\DeclareMathOperator{\codim}{codim}
\DeclareMathOperator{\mult}{mult}
\DeclareMathOperator{\sgn}{sgn}
\DeclareMathOperator{\tr}{Tr}
\DeclareMathOperator{\ord}{ord}
\DeclareMathOperator{\Fr}{Fr}
\DeclareMathOperator{\End}{End}
\DeclareMathOperator{\Mod}{Mod}
\DeclareMathOperator{\Stab}{Stab}
\DeclareMathOperator{\hh}{HH}
\DeclareMathOperator{\h}{H}
\DeclareMathOperator{\gr}{Gr}
\DeclareMathOperator{\Sym}{Sym}
\DeclareMathOperator{\res}{res}
\DeclareMathOperator{\ind}{ind}
\DeclareMathOperator{\Ind}{Ind}
\DeclareMathOperator{\Tot}{Tot}
\DeclareMathOperator{\Def}{Def}
\DeclareMathOperator{\Ext}{Ext}
\DeclareMathOperator{\dR}{dR}
\DeclareMathOperator{\cl}{cl}
\DeclareMathOperator{\exal}{exal}
\DeclareMathOperator{\C}{C}
\DeclareMathOperator{\N}{N}
\DeclareMathOperator{\op}{op}
\DeclareMathOperator{\aad}{ad}
\DeclareMathOperator{\Conj}{Conj}
\DeclarePairedDelimiter\floor{\lfloor}{\rfloor}
\newcommand{\bb}[1]{\mathbb{#1}}
\newcommand{\mc}[1]{\mathcal{#1}}
\newcommand{\mf}[1]{\mathfrak{#1}}
\newcommand{\OO}{\mathcal{O}}
\newcommand{\pd}[1]{\frac{\partial}{\partial #1}}
\newcommand{\colim}{\varinjlim}
\newcommand{\invlim}{\varprojlim}
\renewcommand{\*}{\ast}
\newcommand{\coor}[1]{{#1}^{\textrm{coor}}}
\newcommand{\HC}{\mc{A}_{n-l, l}^H}
\newcommand{\fl}{\coor{\pi}_{\*}\OO_{\textrm{flat}}(\coor{\mc{N}}\times\HC)}
\newcommand\reallywidehat[1]{\arraycolsep=0pt\relax%
\begin{array}{c}
\stretchto{
  \scaleto{
    \scalerel*[\widthof{\ensuremath{#1}}]{\kern-.5pt\bigwedge\kern-.5pt}
    {\rule[-\textheight/2]{1ex}{\textheight}} 
  }{\textheight} %
}{0.5ex}\\           
#1\\                 
\rule{-1ex}{0ex}
\end{array}
}
\DeclareMathAlphabet{\mathcal}{LS1}{stixscr}{m}{n}
\DeclareMathOperator{\sHom}{\mathcal{H\mkern-7mu o\mkern-2.5mu m\mkern-1.5mu}}
\DeclareMathOperator{\sExt}{\mathcal{E\mkern-4.5mu x\mkern-2.5mu t\mkern-1mu}}
\DeclareMathOperator{\sTor}{\mathcal{T\mkern-4.5mu o\mkern-2.5mu r\mkern-1mu}}
\DeclareMathOperator{\sDef}{\mathcal{D\mkern-4mu e\mkern-4.5mu f\mkern-1mu}}
\begin{document}
\title{Sheaves of Twisted Cherednik Algebras as Universal Filtered Formal Deformations}
\author{Alexander Vitanov}
\begin{abstract}
According to a statement by Pavel Etingof, in the special case of an affine variety $X$ with a faithful action by a finite group $G$, the sheaf of (twisted) Cherednik algebras $\mc{H}_{1, c, \psi, X, G}$ with formal parameters $c, \psi$ is a universal formal deformation of $\mc{D}_X\rtimes G$ where $\mc{D}_X$ is the sheaf of differential operators on $X$. In the current note, we generalize Etingof's result to the non-affine case. We prove that for a generic smooth analytic or algebraic variety $X$, the sheaf $\mc{H}_{1, c, \psi, X, G}$ with formal $c$ and  $\psi$ is a universal filtered formal deformation of $\mc{D}_X\rtimes G$. To that aim, we first construct quasi-isomorphisms between the Hochschild (co)chain complex of $\mc{D}_X\rtimes G$ and the $G$-invariant part of the direct sum over all elements $g$ in $G$ of sheaves of holomorphic differential forms on the cotangent bundles of the $g$-fixed point submanifolds in $X$. Finally, we  combine these quasi-isomorphisms with results from the theory of algebraic extensions for sheaves of filtered associative algebras to establish a bijective correspondence between the space of isomorphism classes of filtered infinitesimal deformations of $\mc{D}_X\rtimes G$ and the parameter space of $\mc{H}_{1, c, \psi, X, G}$.  
\end{abstract}
\maketitle
\tableofcontents

\section{Introduction}
In the pioneering work \cite{Eti04}, Pavel Etingof proposed a broad generalization of the notion of a rational Cherednik algebra associated to a complex representation $\mf{h}$ of a finite group $G$ to a smooth algebraic or analytic variety $X$ with a faithful action by a finite group $G$ of automorphisms of $X$. He introduced a sheaf of twisted Cherednik algebras $\mc{H}_{t, c, \psi, X, G}$ on the quotient orbifold $X/G$, where $t$ is a complex number, $c$ are class functions on the set of complex reflections in $G$ and $\psi\in \bb H^2(X, \Omega_X^{\geq1})^G$, and showed  that much of the standard theory of rational Cherednik algebras possesses a natural generalization to the global case. For example, one can derive a global notion of a "Calogero-Moser space" for $X$ by means of the center of the sheaf of Cherednik algebras. From the point of view of deformation theory the central result in his work is \cite[Theorem 2.23]{Eti04} according to which the sheaf of twisted Cherednik algebras $\mc{H}_{1, c, \psi, X, G}$ with formal parameters $c$ and $\psi$ on $X/G$, where $X$ is a complex affine variety, is a universal formal deformation of the skew-group algebra $\mc{D}_X\rtimes G$. Etingof's proof relies solely on the fact that affine varieties are  $D$-affine thanks to which the space of infinitesimal deformation of $\mc{D}_X\rtimes G$ is isomorphic to the second Hochschild cohomology group of the associative algebra of global sections $\Gamma(X, \mc{D}_X\rtimes G)$. This reduces the global statement to the well-known case of associative algebras.  Unfortunately, as smooth analytic and algebraic varieties are not $D$-affine in general, one cannot identify $\mc{D}_X\rtimes G$ with $\Gamma(X, \mc{D}_X\rtimes G)$. That is the reason why the correct version of \cite[Theorem 2.23]{Eti04} in the non-affine global context demands  a fully sheaf-theoretic approach.  
Even though some version of  \cite[Theorem 2.23]{Eti04} in the case of generic smooth algebraic and analytic varieties has always been expected to be true until now, a proper generalization of that result and a corresponding proof thereof have lacked. The current note closes this longstanding gap providing necessary mathematical tools along with a final proof of the fact that the sheaf of twisted Cherednik algebras arises as a universal formal filtered deformation of $\mc{D}_X\rtimes G$ in the global algebraic and analytic case.
\subsection{Outline of the main results}
Section \ref{deftheoryofsfaa} is devoted to the definition of the space $\sDef(\Lambda)$ of first-order deformations of a sheaf of filtered associative algebras $\Lambda$. The naive identification of this space with the set $\exal(\Lambda, \Lambda)$ of locally trivial square-zero extensions of $\Lambda$ by the $\Lambda-\Lambda$ bimodule $\Lambda$ is not meaningful for the purposes of deformation theory because, as noted to the author by Pavel Etingof and Valery Lunts, in  crucial cases such as $\Lambda=\mc{D}_X$ on elliptic curve X, the set $\exal(\Lambda, \Lambda)$ is infinite-dimensional. For sheaves of algebras with infinite dimensional space of infinitesimal formal deformations, it is not clear how to define a meaningful notion of a universal formal deformation. However, it turns out that in the special case of sheaves of filtered associative algebras, we can restrict ourselves to a subspace $\exal(\mc{M}, \Lambda)_f$ of filtered square-zero extensions which in many cases including the relevant for our aims case $\Lambda=\mc{D}_X\rtimes G$ has the advantage of being finite-dimensional. The main result in Section \ref{deftheoryofsfaa} is Theorem \ref{filtdeformationspace} whose proof imitates the proof of \cite[Theorem 3.2]{Gray61} for the case of filtered Hochschild cochains. 
\begin{theorem*}[A]
There is an isomorphism of $\bb C$-vector spaces between $\exal_f(\Lambda, \mc{M})$ and the second hypercohomology group $\bb H^2(X, \sigma_{\geq1}\mc{C}_f^{\bullet}(\Lambda, \mc{M}))$ of the filtered Hochschild cochain complex $\mc{C}_f^{\bullet}(\Lambda, \mc{M})$ of $\Lambda$ with values in $\mc{M}$ brutally truncated at $1$.
\end{theorem*}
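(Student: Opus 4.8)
The plan is to transport the classical correspondence between singular (square-zero) extensions and the second Hochschild cohomology group, as carried out in \cite[Theorem 3.2]{Gray61}, to the present filtered, sheaf-theoretic setting, with ordinary cohomology replaced throughout by the hypercohomology of the \v{C}ech--Hochschild bicomplex attached to $\mc{C}_f^{\bullet}(\Lambda, \mc{M})$. I would first fix, for a given locally trivial filtered square-zero extension $\widetilde{\Lambda} \twoheadrightarrow \Lambda$ with kernel $\mc{M}$, an open cover $\{U_i\}$ of $X$ on which the extension splits, together with local filtration-preserving $\bb C$-linear sections $s_i \colon \Lambda|_{U_i} \to \widetilde{\Lambda}|_{U_i}$ of the projection. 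The failure of each $s_i$ to be multiplicative defines a filtered Hochschild $2$-cochain $f_i \in \mc{C}_f^{2}(\Lambda, \mc{M})(U_i)$, while on each overlap $U_{ij}$ the difference $s_j - s_i$ takes values in the kernel $\mc{M}$ and produces a filtered $1$-cochain $g_{ij} \in \mc{C}_f^{1}(\Lambda, \mc{M})(U_{ij})$. Assembling the $f_i$ (\v{C}ech degree $0$, Hochschild degree $2$) with the $g_{ij}$ (\v{C}ech degree $1$, Hochschild degree $1$) yields a cocycle of total degree $2$ in the total complex of the bicomplex, hence a class in $\bb H^2(X, \sigma_{\geq1}\mc{C}_f^{\bullet}(\Lambda, \mc{M}))$; this is the candidate for the asserted map.

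The brutal truncation $\sigma_{\geq1}$ is dictated precisely by this construction. The only contribution of the discarded Hochschild degree-$0$ sheaf $\mc{C}_f^{0} = \mc{M}$ to total degree $2$ would be a \v{C}ech $2$-cochain valued in $\mc{M}$ on triple overlaps; but once the $s_i$ are chosen, the identity $s_k - s_i = (s_k - s_j) + (s_j - s_i)$ forces the \v{C}ech differential of $(g_{ij})$ to carry no free $\mc{M}$-valued term, so the degree-$0$ row never enters. Retaining it would adjoin abelian-extension data of the bimodule $\mc{M}$ that is unrelated to the filtered algebra structure and would moreover destroy finite-dimensionality in the case $\Lambda = \mc{D}_X\rtimes G$; the truncation isolates exactly the genuine filtered algebra deformations.

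Conversely, from a total $2$-cocycle $(f_i, g_{ij})$ I would reconstruct an extension by equipping $\Lambda|_{U_i} \oplus \mc{M}|_{U_i}$ with the $f_i$-twisted product and gluing over $U_{ij}$ by the filtered automorphism $\id + g_{ij}$. The Hochschild cocycle condition on the $f_i$ guarantees local associativity, and the mixed and \v{C}ech cocycle conditions guarantee that the gluings are compatible on triple overlaps, so the local pieces descend to a global sheaf of filtered associative algebras $\widetilde{\Lambda}$ that is a filtered square-zero extension of $\Lambda$ by $\mc{M}$. I would then check that refining the cover, altering the sections, or modifying the cocycle by a coboundary all yield equivalent extensions, so that the two assignments are mutually inverse and descend to well-defined maps on $\exal_f(\Lambda, \mc{M})$. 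Finally, the Baer sum on $\exal_f(\Lambda, \mc{M})$ corresponds under this bijection to addition of cocycle classes, and scalar multiplication on $\mc{M}$ to scaling of cocycles, upgrading the bijection to the asserted isomorphism of $\bb C$-vector spaces.

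The main obstacle, relative to the affine case of \cite{Gray61}, is twofold, both features arising from the failure of $X$ to be $D$-affine. First, one cannot pass to global sections, so every step must be executed at the level of the \v{C}ech--Hochschild bicomplex, and independence of the resulting class from the chosen cover demands a refinement argument simultaneously compatible with the \v{C}ech and the Hochschild differentials. Second, and more delicate, is the bookkeeping of the filtration: I must verify that the local sections $s_i$ can be chosen filtration-preserving, that the resulting defect and gluing cochains genuinely lie in the filtered subcomplex $\mc{C}_f^{\bullet}$ rather than the full Hochschild complex, and that equivalences of extensions are realized by filtered isomorphisms. It is exactly this filtered refinement, together with the precise compatibility of $\sigma_{\geq1}$ with the extension data, that I expect to require the most care, and it is what has no counterpart in the unfiltered theory while being responsible for finite-dimensionality in the case $\Lambda = \mc{D}_X\rtimes G$.
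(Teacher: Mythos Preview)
Your proposal is correct and follows essentially the same route as the paper's proof: local filtered splittings yield a \v{C}ech $0$-cochain of Hochschild $2$-cocycles together with a \v{C}ech $1$-cochain of Hochschild $1$-cochains, which assemble into a total $2$-cocycle in $\Tot^{\bullet}\check{\C}^{\bullet}(\mc{U},\sigma_{\geq1}\mc{C}_f^{\bullet})$, and the inverse is the gluing construction you describe. One small correction: \cite{Gray61} already works at the level of sheaves and the \v{C}ech--Hochschild bicomplex, so the only genuinely new ingredient here is the filtered bookkeeping you identify, not the passage from affine to global.
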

Section \ref{universaldeform} is the core of the paper and it is mostly devoted to the explicit computation of the hypercohomology group $\bb H^2(X, \sigma_{\geq1}\mc{C}_f^{\bullet}(\mc{D}_X\rtimes G, \mc{D}_X\rtimes G))$ in Theorem (A). Concretely, Section \ref{sheavesofcalabiyaualg} is by and large a rehash of the well-known theory of Calabi-Yau algebras. Here, we contribute a definition for sheaves of Calabi-Yau algebras. The main result is Proposition \ref{symcalabiyau} in which we give an explicit locally free resolution of $\Sym^{\bullet}(\mc{T}_X)$ in terms of left $\Sym^{\bullet}(\mc{T}_X)^e$-modules which we subsequently  use to prove that $\mc{D}_X$ and $\mc{D}_X\rtimes G$ are sheaves of Calabi-Yau algebras of dimension $2\dim(X)$ (\emph{cf.} Proposition \ref{dxcalabiyau} and Corollary \ref{dxgcalabiyau}). To the best of our knowledge, so far the mathematical literature is lacking in formal proofs of these statements. We use these results to prove that the canonical inclusion of the complex of filtered cochains of $\mc{D}_X\rtimes G$ in the Hochschild cochain complex of $\mc{D}_X\rtimes G$ is a quasi-isomorphism (\emph{cf.} Proposition \ref{caninclqiso}).

Section \ref{tracedensitysubsec} deals with the definition of the so-called generalized trace density morphisms with the help of which we subsequently calculate $\bb H^2(X, \sigma_{\geq1}\mc{C}_f^{\bullet}(\mc{D}_X\rtimes G, \mc{D}_X\rtimes G))$. 
This section is divided in three parts.  Section \ref{formalgeom} is a review of Gelfand-Kazhdan's formal geometry. In Section \ref{constrofdxg} we outline a technique, developed in \cite{Vit19}, which realizes the sheaf of untwisted Cherednik algebras $\mc{H}_{1, c, 0, X, G}$ ($c$ is not necessarily formal) in terms of gluing of special sheaves $\coor{\pi}_*\OO_{\textrm{flat}}(\coor{\mc{N}}\times\mc{A}_{n-l, l}^H)$ of flat sections of algebra bundles over connected components of fixed point submanifolds $X_i^H$, associated to parabolic subgroups $H$ in $G$, restricted to the strata $X_H^i$  in $X$. In Sections \ref{sectdoodo} and \ref{gentdm}, we show how this presentation of the sheaf of untwisted Cherdnik algebras combined with an Engeli-Felder construction \cite{EF08} along the lines of \cite{RT12} and \cite{Vit20} yields the desired generalized trace density morphisms
\begin{equation*}
\chi_i^H: \mc{C}_{\bullet}(\mc{D}_X\rtimes G)\rightarrow (j_{i*}^{H}\pi_{i*}^H\Omega_{T^*X_i^H}^{2n-2l_H^i-\bullet})^G
\end{equation*}
for all parabolic subgroups $H$ in $G$ where $\Omega_{T^*X_i^H}^{\bullet}$ is the complex of holomorphic differential forms on the cotangent bundle $\pi_i^H: T^*X_i^H\rightarrow X_i^H$ to the connected component $X_i^H$ with codimension $l_H^i$ and $j_i^H$ is the canonical inclusion of $X_i^H$ in $X$. Using the Calabi-Yau property of $\mc{D}_X\rtimes G$, proven earlier, we arrive at a similar map for the Hochschild cochain complex of $\mc{D}_X\rtimes G$
\begin{equation*}
\mc{X}_i^H:\mc{C}^{\bullet}(\mc{D}_X\rtimes G, \mc{D}_X\rtimes G)\rightarrow (j_{i*}^{H}\pi_{i*}^H\Omega_{T^*X_i^H}^{\bullet-2l_H^i})^G. 
\end{equation*}
Let  $\langle g\rangle$ be the cyclic subgroup of $G$ generated by the element $g$ in $G$. One of the central results in the whole note is the following theorem (\emph{cf.} Theorem \ref{quasiisomthm} and Corollary \ref{cohomologicaltrdcor})    
\begin{theorem*}[B]
For every choice of a linearly independent trace $\phi_{\langle  g\rangle}^i$ of $\widehat{\mc{D}}_{l_g^i}\rtimes\langle g\rangle$ and for every family of holomorphic  Maurer-Cartan forms $\{\omega_{\alpha}\}$ on the cotangent bundle $T^*X_i^g$ of the connected component of the fixed point submanifold $X_i^g$ with codimension $l_g^i$ with values in $\mc{A}_{n-l_g^i, l_g^i}^{\langle g\rangle}$ the maps
\begin{align}
\oplus_{i, g\in G} \chi_i^g: \mc{C}_{\bullet}(\mc{D}_X\rtimes G)\rightarrow\Big(\oplus_{i,g\in G}j_{i*}^g\pi_{i*}^g\Omega_{T^*X_i^g}^{2n-2l_g^i-\bullet}\Big)^G\\
\label{secqis}
\oplus_{i, g\in G}\mc{X}_i^g:\mc{C}^{\bullet}(\mc{D}_X\rtimes G, \mc{D}_X\rtimes G)\rightarrow\Big(\oplus_{i,g\in G}j_{i*}^g\pi_{i*}^g\Omega_{T^*X_i^g}^{\bullet-2l_g^i}\Big)^G
\end{align} 
are quasi-isomorphisms. Moreover, the induced morphisms at the level of homology and cohomology sheaves are canonical.   
\end{theorem*}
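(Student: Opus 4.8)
\emph{Strategy and reduction to a formal model.} The plan is to prove the homological quasi-isomorphism first and then deduce the cohomological one by Calabi--Yau duality. Since each $\chi_i^g$ and $\mc{X}_i^g$ is a morphism of complexes of sheaves, the assertion that it is a quasi-isomorphism is local and may be checked on cohomology sheaves. I would therefore pass to the formal setting using the Gelfand--Kazhdan formal geometry of Section \ref{formalgeom} together with the gluing presentation of $\mc{D}_X\rtimes G$ recalled in Section \ref{constrofdxg}: transverse to each fixed-point component $X_i^g$ the stalk of $\mc{D}_X\rtimes G$ is identified with the formal model $\widehat{\mc{D}}_{l_g^i}\rtimes\langle g\rangle$, while the tangential directions contribute an untwisted formal Weyl factor $\widehat{\mc{D}}_{\,n-l_g^i}$. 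The Maurer--Cartan forms $\{\omega_\alpha\}$ furnish the Fedosov-type flat connection through which this local picture is reassembled into the sheaves $j_{i*}^g\pi_{i*}^g\Omega_{T^*X_i^g}^{\bullet}$, so that both source and target acquire compatible flat connections and the global assertion descends from a quasi-isomorphism of the formal fiber complexes.

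\emph{The local computation.} This is the heart of the argument. Because $|G|$ is invertible, the Hochschild chain complex of a skew-group algebra $A\rtimes G$ decomposes as the $G$-invariant part of a direct sum over $g\in G$ of $g$-twisted complexes $\mc{C}_\bullet(A,A_g)$, the $G$-action permuting the summands by conjugation; this is precisely the inertia structure reflected in the indexing set $\{(i,g)\}$ and the $G$-invariants on the target. For $A=\widehat{\mc{D}}_n$ and $g$ of finite order, the twisted fiber complex is computed by a twisted Hochschild--Kostant--Rosenberg comparison: the untwisted tangential factor $\widehat{\mc{D}}_{\,n-l_g^i}$ yields, via the classical HKR quasi-isomorphism for formal Weyl algebras, the formal de Rham complex of $T^*X_i^g$, while the transverse twisted factor $\widehat{\mc{D}}_{l_g^i}\rtimes\langle g\rangle$ has one-dimensional twisted homology whose generator is the chosen linearly independent trace $\phi_{\langle g\rangle}^i$. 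Tensoring the two produces exactly the codimension shift by $2l_g^i$ and identifies the fiber complex with $\Omega_{T^*X_i^g}^{2n-2l_g^i-\bullet}$. The trace density map $\chi_i^g$, built from $\phi_{\langle g\rangle}^i$ by the Engeli--Felder procedure along the lines of \cite{EF08}, \cite{RT12} and \cite{Vit20}, is by construction the realization of this identification; hence it is a quasi-isomorphism of formal fiber complexes, and descent along the flat connection followed by passage to $G$-invariants yields the first quasi-isomorphism globally.

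\emph{The cochain version and canonicity.} For \eqref{secqis} I would transport the homological statement through the Calabi--Yau duality of dimension $2n$ proven in Section \ref{sheavesofcalabiyaualg}, which supplies a quasi-isomorphism between $\mc{C}_\bullet(\mc{D}_X\rtimes G)$ and $\mc{C}^{2n-\bullet}(\mc{D}_X\rtimes G,\mc{D}_X\rtimes G)$; under the reindexing $\bullet\mapsto 2n-\bullet$ the target degree becomes $2n-2l_g^i-(2n-\bullet)=\bullet-2l_g^i$, matching the shift in $\mc{X}_i^g$, so that the first quasi-isomorphism immediately forces the second. Canonicity of the induced maps on homology and cohomology sheaves reduces to independence of the two choices: rescaling $\phi_{\langle g\rangle}^i$ merely rescales the generator of a one-dimensional space and can be normalized away, while any two admissible families $\{\omega_\alpha\}$ differ by a gauge transformation of the formal geometry groupoid, which acts by an inner automorphism and therefore induces a chain homotopy; thus all admissible choices give chain-homotopic trace densities and a single morphism on (co)homology sheaves.

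\emph{Main obstacle.} I expect the principal difficulty to be the local twisted-sector computation above---namely verifying that the explicit Engeli--Felder trace density genuinely realizes the Hochschild--Kostant--Rosenberg identification on each $g$-twisted sector of $\widehat{\mc{D}}_{l_g^i}\rtimes\langle g\rangle$ with the correct codimension shift, rather than merely defining \emph{some} chain map. A secondary technical point, specific to the non-affine case that is the whole purpose of the note, is controlling the $G$-equivariance and convergence of the Fedosov globalization across the strata so that the fiberwise quasi-isomorphism descends compatibly with the gluing of Section \ref{constrofdxg}.
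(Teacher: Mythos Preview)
Your strategy matches the paper's almost exactly: check the homological statement stalkwise via the twisted-sector decomposition $\mc{HH}_\bullet(\mc{D}_X\rtimes G)\cong\big(\bigoplus_{g}\mc{HH}_\bullet(\mc{D}_X,\mc{D}_Xg)\big)^G$, then transport to cochains by the Calabi--Yau duality of Section~\ref{sheavesofcalabiyaualg}, and argue canonicity by normalization and gauge-homotopy. The cochain version and the canonicity discussion are essentially identical to what the paper does.

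The one place where the paper is sharper than your proposal is the ``local computation'' paragraph. You write that the Engeli--Felder trace density ``is by construction the realization of'' the twisted HKR identification; this is exactly the step you yourself flag as the main obstacle, and the paper does \emph{not} resolve it that way. Instead the paper computes both stalks directly: at $\bar x=q(x)$ with $\Stab(x)=H$ it shows each side is $\bigoplus_{C_H(g)\in\Conj(H)}(\text{one-dimensional})^{Z_H(g)}$, using \cite{FT10} for the left and the holomorphic Poincar\'e lemma for the right. Then, rather than identifying $\chi_i^g$ with an abstract HKR map, it evaluates $\chi_i^g$ on the explicit generating cycle $c_{2n-2l_g^i,x}=\sum_{\sigma}1\otimes u_{\sigma(1)}\otimes\cdots\otimes u_{\sigma(2n-2l_g^i)}$ and uses the normalization of the Feigin--Felder--Shoikhet cocycle $\tau_{2n-2l_g^i}$ to see the result is a nonzero constant. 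A nonzero map between one-dimensional spaces is an isomorphism, and that is the whole argument. (A small terminological slip in your write-up: the trace $\phi_{\langle g\rangle}^i$ lives in $\hh^0$ with dual coefficients and is not itself the ``generator of the twisted homology''; the homology generator is the explicit cycle $c_{2n-2l_g^i,x}$, and $\phi$ enters only through the cocycle $\psi_{2n-2l_g^i}$ that $\chi_i^g$ is built from.)
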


In Section \ref{spfiltinfdef}, by means of quasi-isomorphism \eqref{secqis} from Theorem (B), we compute $\bb H^2(X, \sigma_{\geq1}\mc{C}_f^{\bullet}(\mc{D}_X\rtimes G, \mc{D}_X\rtimes G))$ (\emph{cf}. Proposition \ref{scctrhcc}) and the space of infinitesimal filtered formal deformations $\sDef(\mc{D}_X\rtimes G)_f$ of $\mc{D}_X\rtimes G$ (\emph{cf}. Corollary \ref{spaceoffiltdefs}), respectively. With this at hand, we finally arrive at the main result of the current paper - Theorem \ref{univfiltformdefthm}. In this note we work on analytic varieties but all of the presented results can be proven in the case of smooth algebraic varieties in a similar fashion with the appropriate modifications.  
\begin{theorem*}[C]
Let $X$ be a smooth algebraic variety or a smooth analytic variety equipped with a finite subgroup $G\subset\Aut(X)$ acting faithfully on $X$. The sheaf of twisted Cherednik algebras $\mc{H}_{1, c, \psi, X, G}$ on the quotient orbifold $X/G$ is a universal formal filtered deformation of the skew-group algebra $\mc{D}_X\rtimes G$.
\end{theorem*}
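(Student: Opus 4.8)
The plan is to recast the statement as the standard three-part characterization of a universal formal deformation and to feed into it the cohomological computations prepared in the earlier sections. I must (i) identify the tangent space of the functor of filtered deformations of $\mc{D}_X\rtimes G$ with the parameter space of the twisted Cherednik algebra, (ii) exhibit $\mc{H}_{1, c, \psi, X, G}$ as an honest flat filtered formal deformation whose Kodaira--Spencer map realizes this identification, and (iii) promote versality to universality by controlling infinitesimal automorphisms and obstructions. The base throughout is the formal completion of the finite-dimensional vector space $V$ of pairs $(c, \psi)$, so the assertion is precisely that the functor of filtered formal deformations of $\mc{D}_X\rtimes G$ is pro-represented by $\bb C[[V]]$ with universal family $\mc{H}_{1, c, \psi, X, G}$.

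First I would compute the tangent space. By Theorem (A) (\emph{cf.} Theorem \ref{filtdeformationspace}) the space $\sDef(\mc{D}_X\rtimes G)_f$ of filtered infinitesimal deformations is canonically isomorphic to $\bb H^2(X, \sigma_{\geq1}\mc{C}_f^{\bullet}(\mc{D}_X\rtimes G, \mc{D}_X\rtimes G))$. The quasi-isomorphism \eqref{secqis} of Theorem (B) transports this hypercohomology group to the $G$-invariant cohomology of the complex of holomorphic forms on the cotangent bundles of the components of the fixed-point submanifolds $X_i^g$; its explicit evaluation is recorded in Proposition \ref{scctrhcc}, and its deformation-theoretic reading is Corollary \ref{spaceoffiltdefs}. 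The net effect is a canonical isomorphism $\sDef(\mc{D}_X\rtimes G)_f\cong V$, matching the first-order deformations term by term with the class-function parameters $c$ supported on the reflection strata and the two-form parameter $\psi$.

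Next I would verify that the Cherednik family is a flat filtered formal deformation inducing exactly this isomorphism. Specializing $c=\psi=0$ returns $\mc{D}_X\rtimes G$; the PBW-type flatness of $\mc{H}_{1, c, \psi, X, G}$ ensures that the associated graded sheaf is constant along $V$, so the family is filtered-flat over $\bb C[[V]]$; and differentiating the defining commutation relations in the parameter directions produces, under the isomorphism of the previous step, a basis of $\sDef(\mc{D}_X\rtimes G)_f$. Thus the Kodaira--Spencer map of the family is a linear isomorphism. Since the base $\bb C[[V]]$ is formally smooth and the Kodaira--Spencer map is surjective, the flat family presents the deformation functor as smooth, and a formally smooth presentation with bijective Kodaira--Spencer map is automatically miniversal.

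The hard part will be the passage from versality to universality, i.e. showing that the classifying morphism out of $\bb C[[V]]$ is unique and not merely existent, equivalently that the functor of filtered deformations is genuinely pro-representable. This requires ruling out nontrivial infinitesimal automorphisms of deformations and checking that no higher obstructions block the unique extension of the classifying map. Both are cohomological conditions that I would again resolve through the trace-density identification of Theorem (B): the same quasi-isomorphism controls the degree-one hypercohomology $\bb H^1$ of the truncated filtered Hochschild complex, which governs the automorphisms, together with $\bb H^3$, which houses the obstructions, and the Calabi--Yau structure of $\mc{D}_X\rtimes G$ established earlier pins these down tightly enough to force pro-representability. Granting this rigidity, the isomorphism of tangent spaces together with the smoothness of $\bb C[[V]]$ forces the classifying map $\bb C[[V]]\to\sDef(\mc{D}_X\rtimes G)_f$ to be an isomorphism of formal deformation functors, which is exactly the claim that $\mc{H}_{1, c, \psi, X, G}$ is the universal formal filtered deformation of $\mc{D}_X\rtimes G$.
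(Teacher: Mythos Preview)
Your proposal follows the textbook template for proving universality of a deformation (tangent space identification, Kodaira--Spencer bijectivity, unobstructedness, rigidity of automorphisms), and the first step agrees with the paper: Corollary~\ref{spaceoffiltdefs} is exactly the computation $\sDef(\mc{D}_X\rtimes G)_f\cong V$ that you invoke. However, the paper's own proof of Theorem~\ref{univfiltformdefthm} is a single sentence: it simply observes that the dimension of the parameter space $\{(\psi,c)\}$ coincides with the dimension of $\sDef(\mc{D}_X\rtimes G)_f$, and declares the result. There is no explicit Kodaira--Spencer computation, no discussion of $\bb H^1$ or $\bb H^3$, and no pro-representability argument.

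So your proposal is considerably more ambitious than what the paper actually carries out, and this creates a mismatch. Your step~(ii), that differentiating the Cherednik relations yields a basis of the tangent space, is asserted but not executed; the paper does not execute it either, but you present it as a step you would perform. Your step~(iii) is the genuinely problematic part: you claim that the degree-one and degree-three truncated hypercohomology groups can be ``pinned down tightly enough'' via the trace-density quasi-isomorphism and the Calabi--Yau property to force pro-representability, but you give no computation, and the paper provides no such computation either. In particular, $\bb H^1(X,\sigma_{\geq1}\mc{C}_f^{\bullet})$ is not obviously zero (it would receive contributions from $\bb H^1(X,\Omega_X^{\geq1})^G$), so the automorphism group of a deformation need not be trivial, and your passage from versality to universality is not justified as written.

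In short: your step~(i) matches the paper, your step~(ii) is implicit in both but proved in neither, and your step~(iii) goes well beyond the paper and contains a real gap. If you intend the strong functorial sense of ``universal,'' you owe an actual computation of the automorphism and obstruction groups; if you are content with the paper's more modest reading (the parameter space surjects onto, and has the same dimension as, the space of first-order filtered deformations), then your proof collapses to the dimension count of Corollary~\ref{spaceoffiltdefs} and the rest of your outline is superfluous.
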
  
\subsection{Notation}
From now on untill the end of this paper $X$ will denote a complex analytic manifold of dimension $\dim_{\bb C}X=n$ and equipped with an action by a finite group $G$  of holomorphic automorphisms. A complex reflection is an element $g$ in $G$ such that the fixed point submanifold $X^g$ has a connected component of codimension $1$. Throughout the note we do not insist that $G$ contains complex reflections.         
\section{Deformation Theory of Sheaves of Filtered Associative Algebras}
\label{deftheoryofsfaa}
First, we quickly review well-known aspects of the theory of extension of associative algebras and its relation to  formal deformations. Then, we generalize these concepts to sheaves of associative algebras. 
\subsection{Square-zero extensions of associative algebras and infinitesimal deformations}
Let $k$ be a field of characteristic zero. In the following we denote by $\Lambda$ an associative $k$-algebra with a unit and by $\Lambda^{\op}$ the opposite $k$-algebra. Let $\Lambda^e:=\Lambda\otimes_k\Lambda^{\op}$ denote the enveloping algebra of $\Lambda$. In the following, we use that every left $\Lambda-\Lambda$-bimodule is a left $\Lambda^e$-module. 
\begin{definition}
\label{sqzextensionofalgebras}
An extension of $\Lambda$ is a $k$-algebra $\Gamma$ together with an $k$-algebra epimorphism $\sigma: \Gamma\rightarrow\Lambda$.
\end{definition}
An extension $\sigma:\Gamma\rightarrow\Lambda$ is called \emph{square-zero extension} of $\Lambda$ if its kernel, $\ker\sigma$, satisfies $\ker(\sigma)^2=0$. In that case the left $\Gamma^e$-module $\ker(\sigma)$ acquires a well-defined structure of a left $\Lambda^e$-module by $\lambda\otimes\lambda'j=\gamma j\gamma'$ for every $j\in\ker\sigma$ and every $\lambda,\lambda'\in\Lambda$ so that $\sigma(\gamma)=\lambda$ and $\sigma(\gamma')=\lambda'$ for some $\gamma, \gamma'\in\Gamma$.   
\begin{definition}
\label{sqzextensionofalgerasbybimodules}
Let $A$ be a left $\Lambda^e$-module. A square-zero extension of $\Lambda$ by $A$ is a short exact sequence of $k$-vector spaces
\[\zeta:~0\rightarrow A\xrightarrow{\chi}\Gamma\xrightarrow{\sigma}\Lambda\rightarrow0\]
such that $\Gamma$ is a $k$-algebra, $\sigma: \Gamma\rightarrow\Lambda$ is a square-zero extension of $\Lambda$ and for $A$, considered as a left $\Gamma^e$-module by pullback along $\sigma$, the $k$-vector space isomorphism $\chi: A\rightarrow\ker(\sigma)$ is also an isomorphism of left $\Gamma^e$-modules.     
\end{definition}
Two square-zero extensions $\zeta:~0\rightarrow A\xrightarrow{\chi}\Gamma\xrightarrow{\sigma}\Lambda\rightarrow0$ and $\zeta':~0\rightarrow A\xrightarrow{\chi'}\Gamma'\xrightarrow{\sigma'}\Lambda\rightarrow0$ of $\Lambda$ by a $\Lambda^e$-bimodule $A$ are said to be equivalent if there is a $k$-algebra morphism $\beta: \Gamma\rightarrow\Gamma'$ such that the diagram 
\begin{equation*}
 \begin{tikzcd}[row sep=tiny]
&& \Gamma \arrow[dd, "\beta"]\arrow[dr, "\sigma"] \\
 0\arrow[r]&A \arrow[ur, "\chi"] \arrow[dr, "\chi'"] & &\Lambda\arrow[r]&0  \\
&&\Gamma'\arrow[ur, "\sigma'"]
        \end{tikzcd}
\end{equation*} 
commutes. By the $5$-Lemma, if such a morphism $\beta$ exists, it is an isomorphism of algebras. A square-zero extension $\zeta:~0\rightarrow A\xrightarrow{\chi}\Gamma\xrightarrow{\sigma}\Lambda\rightarrow0$ is $k$-\emph{split} if there is a $k$-linear map $\theta: \Lambda\rightarrow\Gamma$ such that $\sigma\circ\theta=\id_{\Lambda}$. The splitting map $\theta$ defines a $k$-linear isomorphism $\Lambda\oplus A\cong\Gamma$. The pull-back of the multiplication in $\Gamma$ to $\Lambda\oplus A$ yields a product on the direct sum given by  $(\lambda_1, a_1)(\lambda_2, a_2)=(\lambda_1\lambda_2, \lambda_1a_2+a_1\lambda_2+\mu(\lambda_2, \lambda_2))$ for all $(\lambda_1, a_1), (\lambda_2, a_2)\in\Gamma$, where $\mu(\lambda_1, \lambda_2)=\theta(\lambda_1\lambda_2)-\theta(\lambda_1)\theta(\lambda_2)$ is a Hochschild $2$-cocycle in $\hh^2(\Lambda, A)$ called the \emph{factor set} of the $k$-split square-zero extension. The so-defined product gives an isomorphism of $k$-algebras $\Gamma\rightarrow\Lambda\oplus A$ defining an equivalence of square-zero extensions. A square-zero extension of $\Lambda$ by $A$ is called \emph{split} if there is a $k$-algebra morphism $\theta: \Lambda\rightarrow\Gamma$ such that $\sigma\circ\theta=\id_{\Lambda}$. In particular, every split extension is $k$-split. All split extensions of $\Lambda$ by a left $\Lambda^e$-module $A$ are equivalent to the \emph{semi-direct sum}  
\begin{equation*}
0\rightarrow A\rightarrow\Lambda\oplus A\rightarrow\Lambda
\end{equation*}
where $\Lambda\oplus A$ is a $k$-algebra with respect to the product $(\lambda_1, a_1)(\lambda_2, a_2)=(\lambda_1\lambda_2, \lambda_1a_2+a_1\lambda_2)$.  
The set of isomorphism classes of extensions of $\Lambda$ by $A$ is denoted by $\Ext_{k}(\Lambda, A)$. The Baer sum of short exact sequences gives $\Ext_{k}(\Lambda, A)$ the structure of a $k$-vector space with zero element the isomorphism class of split extensions of $\Lambda$ by $A$.

Let $t$ be a central element in $\Lambda$.  
We recall the definition of a first-order deformation of $\Lambda$.
\begin{definition}[Definition 3.10, \cite{DMZ07}]
Let $R$ be an augmented unital ring with an augmentation $\epsilon: R\rightarrow k$. A $R$-deformation of $\Lambda$ is an associative $R$-algebra $B$ together with a $k$-algebra isomorphism $B\otimes_kR\cong\Lambda$.  
\end{definition}
In this note we are primarily interested in \emph{infinitesimal} (\emph{first-order}) \emph{deformations} which are $k[t]/(t^2)$-deformations in the sense of the above definition. These deformations can be equivalently characterized using Hochschild cohomology.
\begin{proposition}
\label{hhtwococycle}
Given a $k[t]/(t)^2$-deformation $B$ of $\Lambda$, there is a unique Hochschild 2-cocycle $\mu_1\in\Hom_k(\Lambda\otimes_k\Lambda, \Lambda)$ such that the $k[t]/(t)^2$- vector space $\Lambda\otimes_kk[t]/(t)^2$, equipped with the product $\lambda_1*\lambda_1=\lambda_1\lambda_2+\mu_1(\lambda_1, \lambda_2)t\mod (t)^2$, becomes a $k[t]/(t)^2$-algebra isomorphic to $B$ as $k[t]/(t)^2$-algebras.
\end{proposition}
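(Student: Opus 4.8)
The plan is to transport the multiplication of $B$ onto the fixed underlying module $\Lambda\otimes_k k[t]/(t)^2$ and to read off $\mu_1$ as the coefficient of $t$. Write $R=k[t]/(t)^2$, a local Artinian ring with maximal ideal $(t)$ and residue field $k$. Since a deformation is flat, and hence free, over $R$ and reduces to $\Lambda$ modulo $t$, I would first lift a $k$-basis of $\Lambda$ to $B$ and invoke Nakayama's lemma to produce an $R$-module isomorphism $\varphi\colon \Lambda\otimes_k R\xrightarrow{\sim}B$ that recovers the prescribed isomorphism with $\Lambda$ upon reduction modulo $t$. This simultaneously fixes a $k$-linear section $s\colon\Lambda\hookrightarrow B$, $s(\lambda)=\varphi(\lambda\otimes 1)$.

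Transporting the product of $B$ along $\varphi$ yields an associative $R$-bilinear product $\ast$ on $\Lambda\otimes_k R$, which by $R$-bilinearity is determined by its restriction to $\Lambda\otimes 1$. I would then write $\lambda_1\ast\lambda_2=m_0(\lambda_1,\lambda_2)\otimes 1+\mu_1(\lambda_1,\lambda_2)\otimes t$ with $m_0,\mu_1\in\Hom_k(\Lambda\otimes_k\Lambda,\Lambda)$; comparing with the reduction modulo $t$ forces $m_0(\lambda_1,\lambda_2)=\lambda_1\lambda_2$, so that $\ast$ is exactly the product displayed in the statement. Expanding the associativity relation $(\lambda_1\ast\lambda_2)\ast\lambda_3=\lambda_1\ast(\lambda_2\ast\lambda_3)$ in powers of $t$, the coefficient of $t^0$ is the associativity of the original product (and is automatic), while the coefficient of $t^1$ reads
\begin{equation*}
\lambda_1\mu_1(\lambda_2,\lambda_3)-\mu_1(\lambda_1\lambda_2,\lambda_3)+\mu_1(\lambda_1,\lambda_2\lambda_3)-\mu_1(\lambda_1,\lambda_2)\lambda_3=0,
\end{equation*}
which is precisely the cocycle condition $d\mu_1=0$, so that $\mu_1$ represents a class in $\hh^2(\Lambda,\Lambda)$.

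By construction $\varphi$ is then an isomorphism of $R$-algebras from $(\Lambda\otimes_k R,\ast)$ to $B$, which establishes the desired isomorphism. For the uniqueness clause I would observe that the only remaining freedom is the choice of section: any other section has the form $s'=s+t\,h$ with $h\in\Hom_k(\Lambda,\Lambda)$, and replacing $s$ by $s'$ replaces $\mu_1$ by $\mu_1+dh$, where $(dh)(\lambda_1,\lambda_2)=\lambda_1 h(\lambda_2)-h(\lambda_1\lambda_2)+h(\lambda_1)\lambda_2$; conversely, cohomologous cocycles produce $R$-algebra isomorphic deformations through exactly such a change of section. Thus the genuine invariant is the cohomology class $[\mu_1]\in\hh^2(\Lambda,\Lambda)$, and once the trivialization $\varphi$ (equivalently the section $s$) is fixed the cocycle $\mu_1$ is determined uniquely. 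I do not anticipate a serious obstacle here, as the argument is essentially formal; the only steps demanding care are the initial trivialization $B\cong\Lambda\otimes_k R$, which rests on the flatness of the deformation over the local Artinian ring $R$ together with Nakayama's lemma, and the bookkeeping needed to reconcile the literal uniqueness asserted in the statement with the fact that only the class $[\mu_1]$ is an isomorphism invariant.
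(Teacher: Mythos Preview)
Your argument is correct and follows the standard approach; the paper does not supply its own proof but simply cites \cite[Theorem 3.15]{DMZ07}, whose proof is exactly the computation you outline. Your observation about uniqueness is also apt: the literal ``unique cocycle'' assertion is only accurate once the trivialization $\varphi$ (equivalently the section $s$) is fixed, and the paper's subsequent definition of equivalence of deformations via cohomologous cocycles confirms that the intended invariant is indeed the class $[\mu_1]\in\hh^2(\Lambda,\Lambda)$.
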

\begin{proof}
Proof is verbatim identical to the proof of Theorem $3.15$ in \cite{DMZ07}.  
\end{proof}
Two first-order deformations $B$ and $B'$ of $\Lambda$ are said to be equivalent if the coresponding unique Hochschild 2-cocycles $\mu$ and $\mu'$ are cohomologous, e.g. $[\mu]=[\mu']$ in $\hh^2(\Lambda, \Lambda)$. Denote by $\Def(\Lambda)$ the set of equivalence  classes of infinitesimal deformations $B$ of $\Lambda$. We have the following easy to prove but crucial  identity (\emph{cf}. Section $2$ in \cite{Lun03}).
\begin{lemma}
\label{extisomdef}
There is a one-to-one correspondence between the sets $\Ext_k(\Lambda, \Lambda)$ and $\Def(\Lambda)$.
\end{lemma}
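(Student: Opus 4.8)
The plan is to factor the desired bijection through the second Hochschild cohomology group $\hh^2(\Lambda, \Lambda)$: I will exhibit each of the two sets as being in natural bijection with $\hh^2(\Lambda, \Lambda)$, and then check that the two identifications are compatible, so that composing one with the inverse of the other produces the required one-to-one correspondence.

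First I would treat the deformation side. By Proposition \ref{hhtwococycle}, every first-order deformation $B$ of $\Lambda$ determines a unique Hochschild $2$-cocycle $\mu_1 \in \Hom_k(\Lambda \otimes_k \Lambda, \Lambda)$, and by the very definition of equivalence of first-order deformations the assignment $B \mapsto [\mu_1]$ descends to a well-defined map $\Def(\Lambda) \to \hh^2(\Lambda, \Lambda)$. Conversely, any $2$-cocycle $\mu_1$ yields the $k[t]/(t)^2$-algebra $\Lambda \otimes_k k[t]/(t)^2$ with the twisted product $\lambda_1 * \lambda_2 = \lambda_1\lambda_2 + \mu_1(\lambda_1, \lambda_2)\,t$, and replacing $\mu_1$ by a cohomologous cocycle produces an isomorphic deformation; this furnishes a two-sided inverse, so $\Def(\Lambda) \cong \hh^2(\Lambda, \Lambda)$.

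Next I would treat the extension side. Since the defining short exact sequence of a square-zero extension is in particular an exact sequence of $k$-vector spaces, it always admits a $k$-linear splitting $\theta$; hence every square-zero extension of $\Lambda$ by $\Lambda$ is $k$-split and carries a factor set $\mu(\lambda_1, \lambda_2) = \theta(\lambda_1\lambda_2) - \theta(\lambda_1)\theta(\lambda_2) \in \hh^2(\Lambda, \Lambda)$. Standard homological algebra then shows that the class $[\mu]$ is independent of the choice of splitting, that equivalent extensions have cohomologous factor sets, and that the Baer sum corresponds to addition of cohomology classes, while conversely the semidirect-type product on $\Lambda \oplus \Lambda$ attached to a cocycle reconstructs a square-zero extension. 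This is the classical isomorphism $\Ext_k(\Lambda, \Lambda) \cong \hh^2(\Lambda, \Lambda)$ recorded in the discussion of factor sets preceding the lemma.

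Finally, I would observe that under these two identifications the factor set of the $k$-split extension built from a cocycle $\mu_1$ is precisely $\mu_1$ itself, because the twisted product on $\Lambda \oplus \Lambda$ and the twisted product on $\Lambda \otimes_k k[t]/(t)^2$ are governed by the same cochain. Thus the two maps to $\hh^2(\Lambda, \Lambda)$ are compatible, and composing the identification $\Def(\Lambda) \cong \hh^2(\Lambda, \Lambda)$ with the inverse of $\Ext_k(\Lambda, \Lambda) \cong \hh^2(\Lambda, \Lambda)$ yields the claimed bijection. I expect the only genuine point requiring care — rather than a real obstacle — to be matching the two equivalence relations, namely verifying that equivalence of deformations and equivalence of square-zero extensions both reduce to cohomologousness of the governing cocycle; everything else is the routine cocycle and coboundary bookkeeping already implicit in Proposition \ref{hhtwococycle} and in the factor-set construction above.
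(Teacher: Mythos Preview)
Your proposal is correct and follows essentially the same approach as the paper: both arguments use that every square-zero extension is $k$-split (since $k$ is a field) to extract a factor set, invoke Proposition \ref{hhtwococycle} to pass between deformations and $2$-cocycles, and check that equivalence on each side reduces to cohomologousness in $\hh^2(\Lambda,\Lambda)$. The only cosmetic difference is that you explicitly factor the bijection through $\hh^2(\Lambda,\Lambda)$, whereas the paper writes down the maps $\Ext_k(\Lambda,\Lambda)\leftrightarrow\Def(\Lambda)$ directly; the content is the same.
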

\begin{proof}
Suppose $\zeta:~0\rightarrow \Lambda\xrightarrow{\chi}\Gamma\xrightarrow{\sigma}\Lambda\rightarrow0$ is a representative of an isomorphism class in $\Ext_k(\Lambda, \Lambda)$. Since per assumption $k$ is a field, $\zeta$ is $k$-split. That means that it is equivalent to a square-zero extension 
\[\zeta:~0\rightarrow \Lambda t\xrightarrow{\chi}\Lambda\oplus\Lambda t\xrightarrow{\sigma}\Lambda\rightarrow0\]
where we accounted that $\Lambda\cong\Lambda t$ as $\Lambda^e$-bimodules. Consequently, for all $\lambda_1, \lambda_2\in\Lambda$ we have
\[(\lambda_1, 0)(\lambda_2, 0)=(\lambda_1\lambda_2, \mu(\lambda_1, \lambda_2))\]
where $\mu$ is a Hochschild $2$-cocycle factor set. By Proposition \ref{hhtwococycle}, the factor set $\mu$ determines a unique $k[t]/(t)^2$-algebra structure on $\Lambda\oplus\Lambda t\cong\Lambda\otimes_kk[t]/(t)^2$ which is an infinitesimal deformation of $\Lambda$. Any other representative of the isomorphism class of $\zeta$ is equivalent to $\zeta$ and thus induces a cohomologous factor set. Hence, there is a well-defined map 
\begin{equation}
\label{extdef}
\Ext_k(\Lambda, \Lambda)\rightarrow\Def(\Lambda).
\end{equation}
Conversely, assume that $B$ is a $k[t]/(t)^2$-deformation of $\Lambda$. Again by Proposition \ref{hhtwococycle}, it is uniquely described by a Hochschild $2$-cocycle $\mu\in\Hom_k(\Lambda^{\otimes2}, \Lambda)$ which according to Theorem 3.1 in \cite{Mac75} corresponds to the factor set of a square-zero extension $\zeta$ of $\Lambda$ by $\Lambda$. An equivalent $k[t]/(t)^2$-deformation $B'$ of $\Lambda$ is per definition given by a cohomologous Hochschild $2$-cocycle $\mu'\in\Hom_k(\Lambda^{\otimes2}, \Lambda)$ which in turn defines a square-zero extension of $\Lambda$ by $\Lambda$ which is equivalent to $\zeta$. Thus, there is a well-defined map 
\begin{equation*}
\Def(\Lambda)\rightarrow\Ext_k(\Lambda, \Lambda)
\end{equation*} 
such that it and the map \eqref{extdef} are inverses of each other.  
\end{proof}
The bijectivity between $\Ext_k(\Lambda, \Lambda)$ and $\Def(\Lambda)$ equips the later with the structure of a $k$-vectors space.
\subsection{Square-zero extensions of sheaves of filtered algebras and infinitesimal deformations}
The material in the following section is mostly inspired by the content in \cite{Gray61}. We assume that $k=\bb C$. Let $\Lambda$ be a sheaf of associative $\bb C$-algebras on a (complex) manifold $X$ an let $\mc{A}$ be a left $\Lambda^e$-module. 
\begin{definition}
\label{sqzeroextforsheaves}
A square-zero extension of $\Lambda$ by $\mc{A}$ is a sheaf of $\bb C$-algebras  $\Gamma$ together with an exact sequence of sheaves of $\bb C$-vector spaces 
\begin{equation*}
0\rightarrow\mc{A}\xrightarrow{i}\Gamma\xrightarrow{p}\Lambda\rightarrow0  
\end{equation*}
in which $p$ is a square-zero extension of sheaves of $\bb C$-algebras as in Definition \ref{sqzextensionofalgebras} satisfying the properties of Definition \ref{sqzextensionofalgerasbybimodules}.
\end{definition}
 Two algebra extensions $\Gamma$ and $\Gamma'$ of $\Lambda$ by a $\Lambda^e$-module $\mc{A}$ are said to be equivalent if there is a homomorphism of sheaves of $\bb C$-algebras $\kappa: \Gamma\rightarrow\Gamma'$ such that the diagram
\begin{equation*}
 \begin{tikzcd}[row sep=tiny]
&& \Gamma \arrow[dd, "\kappa"]\arrow[dr, "p"] \\
 0\arrow[r]&\mc{A} \arrow[ur, "i"] \arrow[dr, "i'"] & &\Lambda\arrow[r]&0\\
&&\Gamma'\arrow[ur, "p'"]
        \end{tikzcd}
\end{equation*} 
commutes. In that case, $\kappa$ is an isomorphism of sheaves of $\bb C$-algebras by the $5$-Lemma. Unlike the case of square-zero extensions of associative $k$-algebras, discussed in the previous section, in general not every zero-square extension of a sheaf of $\bb C$-algebras $\Lambda$ by a $\Lambda^e$-modules splits as a short exact sequence of sheaves of $\bb C$-vector spaces. In fact, the existence of such splittings is not guaranteed even locally. 
As infinitesimal deformations of sheaves of algebras always arise from (local) splitting morphisms at the level of sheaves of vector spaces, we restrict our attention to a  subclass of so-called \emph{locally trivial} square-zero extensions.
\begin{definition}
\label{loctrivsqrzeroext}
A square-zero extension $0\rightarrow\mc{A}\xrightarrow{i}\Gamma\xrightarrow{p}\Lambda\rightarrow0$ of $\Lambda$ by $\mc{A}$ 
is called locally trivial if there is an open cover $\{U_{\alpha}\}$ of $X$ such that for every $U_{\alpha}$, the short exact sequence of sheaves of $\bb C$-vector spaces 
\[0\rightarrow\mc{A}|_{U_{\alpha}}\xrightarrow{i}\Gamma|_{U_{\alpha}}\xrightarrow{p}\Lambda|_{U_{\alpha}}\rightarrow0\]
is $\bb C_X|_{U_{\alpha}}$-split. i.e. there exists a $\bb C_{X| U_{\alpha}}$-linear homomorphism $j_{\alpha}: \Lambda|_{U_{\alpha}}\rightarrow\Gamma|_{U_{\alpha}}$ such that $p\circ j_{\alpha}=\id_{\Lambda|_{U_{\alpha}}}$. 
\end{definition}   
A square-zero extension $0\rightarrow\mc{A}\xrightarrow{i}\Gamma\xrightarrow{p}\Lambda\rightarrow0$ of $\Lambda$ by $\mc{A}$ is \emph{split} if it admits a morphism of sheaves of algebras $j: \Lambda\rightarrow\Gamma$ such that $p\circ j=\id_{\Gamma}$. In particular, a split square-zero extension is locally trivial. We denote the set of isomorphism classes of locally-trivial  square-zero algebra extensions of $\Lambda$ by $\mc{A}$ by $\exal(\Lambda, \mc{A})$. It is a $\bb C$-vector space with respect to the Baer sum and the isomorphism class of split square-zero extensions of $\Lambda$ by $\mc{A}$ as the zero element. 

For the study of differential operators, it makes sense to restrict our attention to the subcategory of sheaves of  filtered associative $\bb C$-algebras. In the remainder of this section $\Lambda$ will be equipped with an increasing filtration $\Lambda_0\subseteq\Lambda_1\subseteq\dots\subseteq\Lambda_n\subseteq\dots$ which is exhaustive, i.e. $\Lambda=\cup_{i=0}^{\infty}\Lambda_i$. Similarly, the $\Lambda^e$-module $\mc{A}$ will be endowed with an increasing filtration $\mc{A}_0\subseteq\mc{A}_1\subseteq\dots\subseteq\mc{A}_n\subseteq\dots$ with $\Lambda_m\cdot\mc{A}_n\subseteq\mc{A}_{m+n}$ which is exhaustive, i.e. $\mc{A}=\cup_{i=0}^{\infty}\mc{A}_i$. We adapt Definition \ref{sqzeroextforsheaves} and Definition \ref{loctrivsqrzeroext} to that context.  
\begin{definition} 
A filtered square-zero extension of $\Lambda$ by $\mc{A}$ is a square-zero extension 
 \begin{equation*}
\xi:  0\rightarrow\mc{A}\xrightarrow{i}\Gamma\xrightarrow{p}\Lambda\rightarrow0
 \end{equation*}
such that  $\Gamma$ is a sheaf of $\bb C$-algebras with an increasing exhaustive filtration and $i$ and $p$ are filtered morphisms of $\bb C$-vector spaces.

The filtered square-zero extension $\xi$ is locally trivial if there is an open cover $\{U_{\alpha}\}$ of $X$ such that for each $U_{\alpha}$, the short exact sequence 
\[\xi: 0\rightarrow\mc{A}|_{U_{\alpha}}\xrightarrow{i}\Gamma|_{U_{\alpha}}\xrightarrow{p}\Lambda|_{U_{\alpha}}\rightarrow0\]
has a filtration-preserving $\bb C_{X|U_{\alpha}}$-linear homomorphism $j_{\alpha}: \Lambda|_{U_{\alpha}}\rightarrow\Gamma|_{U_{\alpha}}$ such that $p\circ j_{\alpha}=\id_{\Lambda|_{U_{\alpha}}}$.
\end{definition}
A filtered square-zero extension $0\rightarrow\mc{A}\xrightarrow{i}\Gamma\xrightarrow{p}\Lambda\rightarrow0$ of $\Lambda$ by $\mc{A}$ is \emph{split} if it admits a morphism of sheaves of $\bb C$-algebras $j: \Lambda\rightarrow\Gamma$ such that $p\circ j=\id_{\Gamma}$. In particular, a  split filtered square-zero extension is locally trivial. Two filtered square-zero algebra extensions $\Gamma$ and $\Gamma'$ of $\Lambda$ by a $\Lambda^e$-module $\mc{A}$ are said to be \emph{equivalent} if there is a filtered homomorphism of sheaves of $\bb C$-algebras $\kappa: \Gamma\rightarrow\Gamma'$ such that the diagram
\begin{equation*}
 \begin{tikzcd}[row sep=tiny]
&& \Gamma \arrow[dd, "\kappa"]\arrow[dr, "p"] \\
 0\arrow[r]&\mc{A} \arrow[ur, "i"] \arrow[dr, "i'"] & &\Lambda\arrow[r]&0          \\
&&\Gamma'\arrow[ur, "p'"]
        \end{tikzcd}
\end{equation*} 
commutes. The subspace of $\exal(\Lambda, \mc{A})$ comprised of filtered locally trivial square-zero extensions of $\Lambda$ by $\mc{A}$ is denoted by $\exal_f(\Lambda, \mc{A})$. 

From now on, assume that $\Lambda$ and $\mc{A}$ are sheaves of locally convex associative $\bb C$-algebras and left $\Lambda^e$-modules, respectively. If $\hat{\otimes}$ designates the topologically completed  tensor product, let 
\begin{equation}
\label{hochschildchaincomplexsheaf}
\mc{C}_{\bullet}(\Lambda, \mc{A}):=\mc{A}\otimes_{\bb C}\Lambda^{\hat{\otimes}\bullet}
\end{equation}
denote the bounded below Hochschild chain complex of 
$\Lambda$ with coefficients in the $\Lambda-\Lambda$-bimodule $\mc{A}$ and by
\begin{equation}
\label{hochschildcochaincomplexsheaf}
\mc{C}^{\bullet}(\Lambda, \mc{A}):=\sHom_{\Lambda^e}(\Lambda^{\hat{\otimes}\bullet+2}, \mc{A})\cong\sHom_{\bb{C}}(\Lambda^{\hat{\otimes}\bullet}, \mc{A}),
\end{equation}
where $\mc{A}^e:=\mc{A}\otimes_{\mathbb{C}}\mc{A}^{\op}$, the bounded below complexes of continuous Hochschild cochains of $\Lambda$ with values in $\mc{A}$, respectively. Accordingly, we denote the homology sheaf of \eqref{hochschildchaincomplexsheaf} by $\mc{HH}_{\bullet}(\Lambda, \mc{A})$ and the cohomology sheaf of \eqref{hochschildcochaincomplexsheaf} by $\mc{HH}^{\bullet}(\Lambda, \mc{A})$, respectively. If we view $\Lambda^{\hat{\otimes}\bullet+2}$ as an  acyclic bar resolution of $\Lambda$ in terms of left $\Lambda^e$-modules, we can express $\mc{HH}_{\bullet}(\Lambda, \mc{A})$ and $\mc{HH}^{\bullet}(\Lambda, \mc{A})$ as left and right derived functors 
\begin{align*}
&\mc{HH}_{\bullet}(\Lambda, \mc{A})=\sTor_{\bullet}^{\Lambda^e}(\mc{A}, \Lambda)=\h_{\bullet}(\mc{A}\otimes_{\Lambda^e}^L\Lambda)\\
&\mc{HH}^{\bullet}(\Lambda, \mc{A})=\sExt_{\Lambda^e}^{\bullet}(\Lambda, \mc{A})=\h^{\bullet}(R\sHom_{\Lambda^e}(\Lambda, \mc{A})).
\end{align*}
In case that $\Lambda$ and $\mc{A}$ are equipped with an increasing filtration, there is a natural increasing filtration on the $n$-fold tensor product $\Lambda^{\otimes n}$ for every integer $n\geq0$ by 
\[F^p(\Lambda^{\otimes n})=\oplus_{i_{1}+\cdots i_n=p}F^{i_1}\Lambda\otimes\cdots\otimes F^{i_n}\Lambda.\] 
The restriction of the Hochschild cochains of $\Lambda$ with values in $\mc{A}$ to filtration preserving cochains yields a subcomplex $\mc{C}_f^{\bullet}(\Lambda, \mc{A})$ of $\mc{C}^{\bullet}(\Lambda, \mc{A})$ which is equipped with a canonical decreasing filtration 
\[\mc{C}_f^{\bullet}(\Lambda, \mc{A})=F^0\mc{C}_f^{\bullet}(\Lambda, \mc{A})\supset F^1\mc{C}_f^{\bullet}(\Lambda, \mc{A})\supseteq\cdots\supseteq F^p\mc{C}_f^{\bullet}(\Lambda, \mc{A})\supseteq\cdots
\] 
given by
\begin{equation*}
F^p\mc{C}_f^{n}(\Lambda, \mc{A}):=\big\{f\in\sHom_{\bb C}(\Lambda^{\otimes n}, \mc{A}): f(F^q(\Lambda^{\otimes n}))\subseteq F^{q-p}\mc{A} ~~\textrm{for every $q$}\big\}
\end{equation*}
for every $n\geq0$. We denote the cohomology sheaf of $\mc{C}_f^{\bullet}(\Lambda, \mc{A})$ by $\mc{HH}_f^{\bullet}(\Lambda, \mc{A})$. 

The proof of the following theorem is almost a verbatim repetition of the proof of the first half of \cite[Theorem $3.2$]{Gray61} adapted to the filtered case. Nevertheless, for the sake of completeness, we shall provide a detailed proof.
\begin{theorem}
\label{filtdeformationspace}
There is an isomorphism of $\bb C$-vector spaces 
\[\exal_f(\Lambda, \mc{A})\cong\bb H^2(X, \sigma_{\geq1}\mc{C}_f^{\bullet}(\Lambda, \mc{A})).\]
\end{theorem}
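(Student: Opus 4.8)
The plan is to compute the right-hand side through the \v{C}ech--Hochschild double complex attached to a trivializing open cover, and to match its total-degree-two cohomology with the gluing data of a filtered locally trivial square-zero extension. First I would fix an open cover $\mf U=\{U_\alpha\}$ of $X$ and form the double complex $K^{p,q}=\check C^p(\mf U,(\sigma_{\geq1}\mc C_f^\bullet(\Lambda,\mc A))^q)$, equipped with the \v{C}ech differential $\check\delta$ and the filtered Hochschild differential $\delta_H$. Its associated total complex computes $\check H^\bullet(\mf U,\sigma_{\geq1}\mc C_f^\bullet(\Lambda,\mc A))$, and after passing to the direct limit over refinements of $\mf U$ one recovers $\bb H^\bullet(X,\sigma_{\geq1}\mc C_f^\bullet(\Lambda,\mc A))$ (using paracompactness of $X$ and the standard comparison between \v{C}ech hypercohomology and derived-functor hypercohomology for bounded-below complexes of sheaves). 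The decisive bookkeeping is that, because $\sigma_{\geq1}$ kills the degree-zero term $\mc C_f^0=\mc A$, a total-degree-two cochain has only the two components $(p,q)=(0,2)$ and $(p,q)=(1,1)$, that is, a family $\{\mu_\alpha\}\in\check C^0(\mf U,\mc C_f^2)$ together with a family $\{\tau_{\alpha\beta}\}\in\check C^1(\mf U,\mc C_f^1)$.

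Next I would define the forward map $\exal_f(\Lambda,\mc A)\to\bb H^2$. Given a representative $\xi:0\to\mc A\xrightarrow{i}\Gamma\xrightarrow{p}\Lambda\to0$, local triviality supplies filtration-preserving $\bb C$-linear splittings $j_\alpha\colon\Lambda|_{U_\alpha}\to\Gamma|_{U_\alpha}$ with $p\circ j_\alpha=\id$. I would set the factor set $\mu_\alpha(\lambda_1,\lambda_2)=j_\alpha(\lambda_1)j_\alpha(\lambda_2)-j_\alpha(\lambda_1\lambda_2)$, which takes values in $\ker p\cong\mc A$ and is filtration-preserving, hence a section of $\mc C_f^2$; and $\tau_{\alpha\beta}=j_\beta-j_\alpha$, which lands in $\mc A$ since $p(j_\beta-j_\alpha)=0$, hence a section of $\mc C_f^1$ over $U_{\alpha\beta}$. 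Associativity of the product of $\Gamma$ forces $\delta_H\mu_\alpha=0$; using that $\mc A$ is square-zero a direct computation gives $\mu_\beta-\mu_\alpha=\delta_H\tau_{\alpha\beta}$; and $\tau_{\alpha\beta}+\tau_{\beta\gamma}-\tau_{\alpha\gamma}=(j_\beta-j_\alpha)+(j_\gamma-j_\beta)-(j_\gamma-j_\alpha)=0$ shows $\check\delta\tau=0$. Thus $(\{\mu_\alpha\},\{\tau_{\alpha\beta}\})$ is a total-degree-two cocycle. Replacing $j_\alpha$ by $j_\alpha+\rho_\alpha$ with $\rho_\alpha\in\mc C_f^1(U_\alpha)$ changes the factor set by $\delta_H\rho_\alpha$ and the transition cochain by $\rho_\beta-\rho_\alpha$, that is, by the total differential of $\{\rho_\alpha\}\in\check C^0(\mf U,\mc C_f^1)$, so the class in $\bb H^2$ is independent of the chosen splittings; an equivalence $\kappa\colon\Gamma\to\Gamma'$ transports splittings to splittings and so preserves the class, and refinement of $\mf U$ is compatible with the limit. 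One then checks additivity for the Baer sum, obtaining a homomorphism of $\bb C$-vector spaces.

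For bijectivity I would construct the inverse directly. For surjectivity, represent a given class (after refining $\mf U$) by a cocycle $(\{\mu_\alpha\},\{\tau_{\alpha\beta}\})$ and build on each $U_\alpha$ the sheaf of algebras $\Gamma_\alpha=\Lambda|_{U_\alpha}\oplus\mc A|_{U_\alpha}$ with product
\[(\lambda_1,a_1)\cdot(\lambda_2,a_2)=(\lambda_1\lambda_2,\;\lambda_1a_2+a_1\lambda_2+\mu_\alpha(\lambda_1,\lambda_2)),\]
which is associative precisely because $\delta_H\mu_\alpha=0$ and filtered because $\mu_\alpha$ preserves filtration. The maps $\phi_{\alpha\beta}\colon(\lambda,a)\mapsto(\lambda,a+\tau_{\alpha\beta}(\lambda))$ are algebra isomorphisms exactly because $\mu_\beta-\mu_\alpha=\delta_H\tau_{\alpha\beta}$, and they satisfy $\phi_{\alpha\beta}\phi_{\beta\gamma}=\phi_{\alpha\gamma}$ because $\check\delta\tau=0$; the $\Gamma_\alpha$ therefore glue to a global filtered locally trivial square-zero extension mapping to the given class. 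Since the forward map is $\bb C$-linear, injectivity reduces to triviality of the kernel: if $(\{\mu_\alpha\},\{\tau_{\alpha\beta}\})=D\{\rho_\alpha\}$ is a total coboundary, then $\mu_\alpha=\delta_H\rho_\alpha$ and $\tau_{\alpha\beta}=\rho_\beta-\rho_\alpha$, so the corrected splittings $j_\alpha-\rho_\alpha$ are algebra morphisms that agree on overlaps and glue to a global algebra splitting of $\xi$; hence $\Gamma$ is split and represents the zero element of $\exal_f(\Lambda,\mc A)$.

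The main obstacle, and the place where the filtered truncated setting genuinely enters, is twofold. First, one must justify that the honest derived-functor hypercohomology of the brutally truncated complex is computed by the \v{C}ech double complex in the limit over covers, and in particular over the very covers on which the extension trivializes; this is where paracompactness and the \v{C}ech-to-derived comparison for complexes of sheaves are needed. Second, and more conceptually, one must verify that the brutal truncation $\sigma_{\geq1}$ is exactly the right normalization: the transition data $\tau_{\alpha\beta}=j_\beta-j_\alpha$ automatically satisfies $\check\delta\tau=0$ on the nose, so no degree-zero component in $\mc C_f^0=\mc A$ ever appears, while the indeterminacy in the splittings is precisely $\check C^0(\mf U,\mc C_f^1)$. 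It is this matching that both yields a finite-dimensional answer in the cases of interest and suppresses the spurious $\check H^2(X,\mc A)$-contributions that render the untruncated $\exal(\Lambda,\mc A)$ infinite-dimensional. Keeping every map filtration-preserving throughout --- arranging the local splittings and factor sets to lie in $\mc C_f^\bullet$ rather than merely in $\mc C^\bullet$ --- is the routine but unavoidable technical thread running through all of the above.
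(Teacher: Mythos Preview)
Your proposal is correct and follows essentially the same route as the paper's own proof: both set up the \v{C}ech--Hochschild double complex, extract from a filtered locally trivial extension the pair $(\{\mu_\alpha\},\{\tau_{\alpha\beta}\})$ (the paper writes $f_\alpha$ and $h_{\alpha\beta}$, with $f_\alpha$ defined via $q_\alpha\circ\mult\circ(j_\alpha\otimes j_\alpha)$, which unwinds to exactly your factor set), verify the three components of the total cocycle condition, construct the inverse by gluing local semidirect sums $\Lambda\oplus\mc A$ along the transition data, and invoke paracompactness to identify \v{C}ech hypercohomology with the derived-functor version. Your treatment of injectivity via triviality of the kernel is a minor rearrangement of the paper's direct argument that cohomologous cocycles yield equivalent extensions, but the content is the same.
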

\begin{proof}
Let $K^{\bullet,\bullet}:=\check{\C}^{\bullet}\big(\mc{U}, \mc{C}_f^{\bullet}(\Lambda, \mc{A})\big)$ be the \v Cech double complex associated to $\mc{U}$ :
\begin{equation*}
\begin{tikzcd}
&\vdots&&\vdots&\vdots\\
 \cdots \arrow[r, "d"]&K^{i+1, 0}\arrow[u, "\delta"]\arrow[r, "d"]&\cdots\arrow[r, "d"]&K^{i+1, j}\arrow[u, "\delta"] \arrow[r, "d"]
    \arrow[r, "d"]&K^{i+1, j+1}\arrow[u, "\delta"]\arrow[r, "d"]&\cdots \\
 \cdots \arrow[r, "d"]&K^{i, 0}\arrow[u, "\delta"]\arrow[r, "d"]&\cdots\arrow[r, "d"]&K^{i, j}\arrow[u, "\delta"] \arrow[r, "d"]&K^{i, j+1}\arrow[u, "\delta"] \arrow[r, "d"]&\cdots\\
 &\vdots\arrow[u, "\delta"]&&\vdots\arrow[u, "\delta"]&\vdots\arrow[u, "\delta"]
 \end{tikzcd}
\end{equation*} 
in which $\delta$ is the \v Cech differential and $d$ is the standard Hochschild differential with $\delta d-d\delta=0$ whose total complex $\Tot^{\bullet}(K^{\bullet\bullet})$ has a differential $D':=\delta_{p,q}+(-1)^pd_{p,q}$ in bidegree $(p, q)$ and differential $D'_{\textrm{tot}}=\sum_{p+q=n}\delta_{p,q}+(-1)^pd_{p,q}$ in total degree $n$. Let $0\rightarrow\mc{A}\xrightarrow{i}\Gamma\xrightarrow{p}\Lambda\rightarrow0$ be an element in $\exal_f(\Lambda, \mc{A})$. Then by definition there is an open cover $\mc{U}=\{U_{\alpha}\}$ of $X$ together with filtered morphisms of sheaves of vector spaces $j_{\alpha}: \Lambda|_{U_{\alpha}}\rightarrow\Gamma|_{U_{\alpha}}$ for every $\alpha$. On intersection $U_{\alpha\beta}:=U_{\alpha}\cap U_{\beta}$, define the filtered map $h_{\alpha\beta}:=j_{\beta}-j_{\alpha}$. As $p\circ h_{\alpha\beta}=0$, it follows that $\Ima(h_{\alpha\beta})=\Ima(i)\cong\mc{A}$. Hence, abusing notation we obtain a morphism of sheaves of $\bb C$-vector spaces $h_{\alpha\beta}: \Lambda|_{U_{\alpha\beta}}\rightarrow\mc{A}|_{U_{\alpha\beta}}$. Let $\delta$ denote the \v Cech differential in the \v Cech complex $\check{\C}^{\bullet}(\mc{U}, \mc{C}_f^1(\Lambda, \mc{A}))$. Then, 
\begin{align}
\label{cechdiffheqzero}
\delta_{1,1}(h)_{\alpha\beta\gamma}
=(j_{\gamma}-j_{\beta})-(j_{\gamma}-j_{\alpha})+(j_{\beta}-j_{\alpha})=0
\end{align} 
implies that $h_{\alpha\beta}$ is a \v Cech $1$-cocycle in $\check{\C}^{1}(\mc{U}, \mc{C}_f^1(\Lambda, \mc{A}))$. Define a map $f_{\alpha}: \Lambda|_{U_{\alpha}}\otimes_{\mathbb C}\Lambda|_{U_{\alpha}}\rightarrow\mc{A}|_{U_{\alpha}}$ as the composition
\begin{align*}
& \Lambda|_{U_{\alpha}}\otimes_{\mathbb C}\Lambda|_{U_{\alpha}}\xrightarrow{j_{\alpha}\otimes_{\bb C}j_{\alpha}}\Gamma|_{U_{\alpha}}\otimes_{\bb C}\Gamma|_{U_{\alpha}}\xrightarrow{\mult}\Gamma|_{U_{\alpha}}\xrightarrow{q_{\alpha}}\mc{A}|_{U_{\alpha}}
\end{align*}
where $\mult$ denotes the product in $\Gamma$ and $q_{\alpha}:=\id_{\Gamma}-j_{\alpha}\circ p$. The associativity of the product in $\Gamma$ implies that \begin{equation}
\label{hhdifffeqzero}
d_{0,2}f_{\alpha}=0.
\end{equation}
As explained in the proof of \cite[Theorem 3.2]{Gray61}, substituting in the definition of $f_{\alpha}$ the identities $j_{\beta}=j_{\alpha}-h_{\alpha\beta}$, $q_{\beta}=q_{\beta}+h_{\alpha\beta}\circ p$ and $p\circ\mult=\mult(p\otimes p)$ yields 
\begin{align*}
(\delta_{0, 2} f)_{\alpha\beta}&=f_{\beta}-f_{\alpha}\\
 &=h_{\beta\alpha}-h_{\alpha\beta}\\
&=d_{1,1}h_{\alpha\beta}
\end{align*}
which together with Equality \eqref{cechdiffheqzero} and Equality \eqref{hhdifffeqzero} implies 
\[D'_{\textrm{tot}}(f_{\alpha}\oplus h_{\alpha\beta})=(\delta_{1,1}-d_{1,1})h_{\alpha\beta}+(\delta_{0,2}+d_{0,2})f_{\alpha}=0.\] 
This means that $f_{\alpha}\oplus h_{\alpha\beta}$ is a $2$-cocycle in $\Tot^{\bullet}(F^1K^{\bullet\bullet})$. 
Suppose $0\rightarrow\mc{A}\xrightarrow{i'}\Gamma'\xrightarrow{p'}\Lambda$ is an equivalent extension such that the diagram
\begin{equation*}
 \begin{tikzcd}[row sep=tiny]
&& \Gamma \arrow[dd, "k"]\arrow[dr, "p"] \\
 0\arrow[r]&\mc{A} \arrow[ur, "i"] \arrow[dr, "i'"] & &\Lambda\arrow[r]&0          \\
&&\Gamma'\arrow[ur, "p'"]
        \end{tikzcd}
\end{equation*} 
commutes and with associated induced $2$-cocycle $h'_{\alpha\beta}\oplus f'_{\alpha}$ in $\Tot^{\bullet}(F^1K^{\bullet\bullet})$. If we identify $\mc{A}$ with the images of the identity inclusions $i(\mc{A})$ and $i'(\mc{A})$ then $k|_{\mc{A}}=\id_{\mc{A}}$. Let $j_{\alpha}$ and $j'_{\alpha}$ be the corresponding filtered splitting homomorphisms of the filtered square-zero extensions. Set $t_{\alpha}:=j'_{\alpha}-k\circ j_{\alpha}$. Then we have,
\[p'\circ t_{\alpha}=\id_{\Lambda}|_{U_{\alpha}}-\id_{\Lambda}|_{U_{\alpha}}=0\]  
which implies $\Ima(t_{\alpha})\cong\Ima(i')\cong\mc{A}$. Hence, abusing notation this map extends to a linear morphism $t_{\alpha}: \Lambda|_{U_{\alpha}}\rightarrow\mc{A}|_{U_{\alpha}}$, i.e. $t_{\alpha}\in\check{\C}^0(\mc{U}, \mc{C}_f^1(\Lambda, \mc{A}))$. Let $h_{\alpha\beta}:=j_{\beta}-j_{\alpha}$, and $h'_{\alpha\beta}:=j'_{\beta}-j'_{\alpha}$. Note that $k\circ h_{\alpha\beta}=h_{\alpha\beta}$. Then, we have,
\begin{align}
\label{diffcechcoboundary}
h'_{\alpha\beta}-h_{\alpha\beta}&= h'_{\alpha\beta}-k\circ h_{\alpha\beta}\nonumber\\
&=(j'_{\beta}-k\circ j_{\beta})-(j'_{\alpha}-k\circ j_{\alpha})\nonumber\\
&=\delta(t_{\alpha}).
 \end{align}
Consider 
the difference of both $(0,2)$-cochains $f_{\alpha}$ and $f'_{\alpha}$  
\begin{align}
\label{diffhhcoboundary}
\big(f'_{\alpha}-f_{\alpha}\big)(\lambda_1, \lambda_2)&=q'_{\alpha}\circ\mult'(j'_{\alpha}(\lambda_1)\otimes j'_{\alpha}(\lambda_2))-q_{\alpha}\circ\mult(j_{\alpha}(\lambda_1)\otimes j_{\alpha}(\lambda_2))\nonumber\\
&=(\id_{\Gamma'}-j'_{\alpha}\circ p')(j'_{\alpha}(\lambda_1)j'_{\alpha}(\lambda_2))-(\id_{\Gamma}-j_{\alpha}\circ p)(j_{\alpha}(\lambda_1)j_{\alpha}(\lambda_2))\nonumber\\
&=j'_{\alpha}(\lambda_1)j'_{\alpha}(\lambda_2)-j'_{\alpha}(\lambda_1\lambda_2)+j_{\alpha}(\lambda_1\lambda_2)-j_{\alpha}(\lambda_1)j_{\alpha}(\lambda_2)\nonumber\\
&=\big(j'_{\alpha}(\lambda_1)j'_{\alpha}(\lambda_2)-j'_{\alpha}(\lambda_1)kj_{\alpha}(\lambda_2)\big)+j'_{\alpha}(\lambda_1)kj_{\alpha}(\lambda_2)-j'_{\alpha}(\lambda_1)j_{\alpha}(\lambda_2)\nonumber\\
&+\big(j'_{\alpha}(\lambda_1)j_{\alpha}(\lambda_2)-kj_{\alpha}(\lambda_1)j_{\alpha}(\lambda_2)\big)-j'_{\alpha}(\lambda_1\lambda_2)+kj_{\alpha}(\lambda_1\lambda_2)\nonumber\\
&=j'_{\alpha}(\lambda_1)t_{\alpha}(\lambda_2)+t_{\alpha}(\lambda_{1})j_{\alpha}(\lambda_2)-t_{\alpha}(\lambda_1\lambda_2)\nonumber\\
&=\lambda_1\cdot t_{\alpha}(\alpha_2)-t_{\alpha}(\lambda_1\lambda_2)+t_{\alpha}(\lambda_1)\cdot\lambda_2\nonumber\\
&=dt_{\alpha}(\lambda_1, \lambda_1).
\end{align}
We used that $k|_{\mc{A}}=\id_{\mc{A}}$ and in the second to the last line that the extension is square-zero, hence $j_{\alpha}(\lambda_1)aj'_{\alpha}(\lambda_1)=\lambda_1\cdot a\cdot\lambda_2$ for all sections $\lambda_1, \lambda_2$ of $\Lambda$ and every section $a$ of $\mc{A}$. Identities \eqref{diffcechcoboundary} and \eqref{diffhhcoboundary} imply that both $2$-cocycles $h_{\alpha\beta}\oplus f_{\alpha}$ and $h'_{\alpha\beta}\oplus f'_{\alpha}$ differ by a coboundary $D'_{\textrm{tot}}t_{\alpha}=(\delta+d)t_{\alpha}$ in $\Tot^{\bullet}(F^1K^{\bullet\bullet})$. Hence, there is a well-defined mapping 
\begin{equation}
\label{exalch}
\exal_f(\Lambda, \mc{A})\rightarrow \check{\h}^2(\mc{U}, \mc{C}_f^{\bullet}(\Lambda, \mc{A})).
\end{equation}  
Conversely, suppose $f_{\alpha}\oplus h_{\alpha\beta}$ is a $2$-cocycle in $\Tot^{\bullet}(F^1K^{\bullet\bullet})$. Then we can define a locally trivial filtered square-zero $\bb C$-vector space extension of $\Lambda$ by $\mc{A}$ as laid out in \cite[Proposition 3.1]{Gray61}. Let $\Gamma$ be the sheaf defined by
\begin{align*}
\bigcup_{\alpha}(\Lambda\oplus\mc{A})|_{U_{\alpha}}\bigg/\bigg\langle (\lambda, a)|_{U_{\alpha}}\sim(\lambda, a+h_{\alpha\beta}(\lambda))|_{U_{\alpha}}~:~(\lambda, a)|_{U_{\alpha}}\in(\Lambda\oplus\mc{A})|_{U_{\alpha}}\bigg\rangle.
\end{align*} 
The product in $\Gamma|_{U_{\alpha}}$ is given by 
\[(\lambda_1, a_1)|_{U_{\alpha}}\cdot(\lambda_2,  a_2)|_{U_{\alpha}}=(\lambda_1\lambda_2,  \lambda_1a_2+a_1\lambda_2+f_{\alpha}(\lambda_1, \lambda_2))|_{U_{\alpha}}.\] 
The well-definition of the product on intersections $U_{\alpha\beta}$ follows from the compatibility relation for $f_{\alpha}$. With this multiplication the above construction becomes in fact an algebra extension. Assume that $h'_{\alpha\beta}\oplus f'_{\alpha}$ is a cohomologous $2$-cocycle in $\Tot^{\bullet}(F^1K^{\bullet\bullet})$. Then, there is by definition a $(0,1)$-cochain $t_{\alpha}$ such that $h'_{\alpha\beta}\oplus f'_{\alpha}-h_{\alpha\beta}\oplus f_{\alpha}=D'_{\textrm{tot}}t_{\alpha}$. Consequently, we can write for the associated locally trivial filtered square-zero extension $\Gamma'$ of $\Lambda$ by $\mc{A}$
\begin{align*}
\bigcup_{\alpha}(\Lambda\oplus\mc{A})|_{U_{\alpha}}\bigg/\bigg\langle (\lambda, a)|_{U_{\alpha}}\sim(\lambda, a+h_{\alpha\beta}(\lambda)+\delta(t_{\alpha})(\lambda))|_{U_{\alpha}}~:~(\lambda, a)|_{U_{\alpha}}\in(\Lambda\oplus\mc{A})|_{U_{\alpha}}\bigg\rangle
\end{align*} 
We define a morphism of sheaves of $\bb C$-modules $k: \Gamma\rightarrow\Gamma'$ by 
\[(\lambda, a)|_{U_{\alpha}}\mapsto(\lambda, a+t_{\alpha}(\lambda))|_{U_{\alpha}}.\] 
This map is in fact a morphism of sheaves of $\bb C$-algebras since we have 
\begin{align*}
k\big((\lambda_1, a_1)|_{U_{\alpha}}(\lambda_2, a_2)|_{U_{\alpha}}\big)&=k\big((\lambda_1\lambda_1, \lambda_1a_2+a_1\lambda_2+f_{\alpha}(\lambda_1, \lambda_2))|_{U_{\alpha}}\big)\\
&=(\lambda_1\lambda_2, \lambda_1a_2+a_1\lambda_2+f_{\alpha}(\lambda_1, \lambda_2)+t_{\alpha}(\lambda_1\lambda_2))|_{U_{\alpha}}\\
&=(\lambda_1\lambda_2, \lambda_1a_2+a_1\lambda_2+dt_{\alpha}(\lambda_1, \lambda_2)+t_{\alpha}(\lambda_1\lambda_2)+f'_{\alpha}(\lambda_1, \lambda_2))|_{U_{\alpha}}\\
&=(\lambda_1, a_1+ t_{\alpha}(a_1))|_{U_{\alpha}}\cdot(\lambda_2, a_2+t_{\alpha}(a_2))|_{U_{\alpha}}\\
&=k((\lambda_1, a_1)|_{U_{\alpha}})k((\lambda_2, a_2)|_{U_{\alpha}}).
\end{align*}
As evidently $k\circ i=i'$ and $p'\circ k=p$, the morphism $k$ defines an equivalence relation between the extensions $\Gamma$ and $\Gamma'$. Thus, we obtain a well-defined map
\begin{equation}
\label{chexal}
\check{\h}^2(\mc{U}, \mc{C}_f^{\bullet}(\Lambda, \mc{A}))\rightarrow\exal_f(\Lambda, \mc{A}).
\end{equation} 
It is evident that the morphisms \eqref{exalch} and \eqref{chexal} are the inverses of each other. As direct limits preserve exactness, we conclude $\exal_{f}(\Lambda, \mc{A})\cong\check{\h}^2(X, \mc{C}_f^{\bullet}(\Lambda, \mc{A}))$. The claim follows from the fact that $X$ is paracompact by assumption.  
\end{proof}
In the spacial case of $\mc{A}=\Lambda$, inspired by the isomorphism in Lemma \ref{extisomdef}, we define the space of first-order deformations $\sDef(\Lambda)$ of $\Lambda$ as $\exal(\Lambda, \Lambda)$ and the space of filtered first-order deformations $\sDef_f(\Lambda)$ of $\Lambda$ by $\exal_f(\Lambda, \Lambda)$. 
\section{Universal deformation of $\mc{D}_X\rtimes G$}
\label{universaldeform}
\subsection{Sheaves of Calabi-Yau algebras}
\label{sheavesofcalabiyaualg}
Calabi-Yau algebras were introduced and first studied in \cite{Gin06}. These  algebras arise naturally in the theory of non-commutative  deformation of spaces and possess a number of valauble properties. In this note we are interested in them because of the duality between Hochschild homology and cohomology of Calabi-Yau algebras admit. 
Let us recall the definition of a Calabi-Yau algebra from \cite{Gin06}.  
\begin{definition}[Definition 3.2.3, \cite{Gin06}]
\label{calabiyaassualgebra}
An associative $k$-algebra is a Calabi-Yau algbra of dimension $d$ provided that $\Lambda$ has a finitely-generated projective $\Lambda-\Lambda$-bimodule resolution and $\hh^{\bullet}(\Lambda, \Lambda\otimes_{k}\Lambda)\cong \Lambda[-d]$ as graded $\Lambda-\Lambda$-bimodules. 
\end{definition}
Calabi-Yau algebras admit a so-called \emph{van den Berg duality} between Hochschild homology and cohomology which is  precisely formulated in the ensuing theorem. 
\begin{theorem}[\cite{VdB98}] 
\label{VdB}
Let $\Lambda$ be a Calabi-Yau associative $\bb C$-algebra of dimension $d$. Then for every left $\Lambda^e$-module $M$ there is a canonical isomorphism $\hh_i(\Lambda, M)\cong\hh^{d-i}(\Lambda, M)$. 
\end{theorem}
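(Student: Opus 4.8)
The plan is to deduce the duality directly from the two defining properties of a Calabi-Yau algebra recorded in Definition \ref{calabiyaassualgebra}: homological smoothness, i.e.\ the existence of a finitely generated projective $\Lambda^e$-module (bimodule) resolution $P_\bullet \to \Lambda$, and the bimodule identification $\hh^\bullet(\Lambda, \Lambda\otimes_{\bb C}\Lambda) \cong \Lambda[-d]$, which unravels to $\Ext^i_{\Lambda^e}(\Lambda, \Lambda^e) = 0$ for $i \neq d$ together with a fixed isomorphism $\Ext^d_{\Lambda^e}(\Lambda, \Lambda^e) \cong \Lambda$ of $\Lambda^e$-modules. Since the single resolution $P_\bullet$ computes both $\hh^\bullet(\Lambda, M) = H^\bullet\big(\Hom_{\Lambda^e}(P_\bullet, M)\big)$ and $\hh_\bullet(\Lambda, M) = \operatorname{Tor}^{\Lambda^e}_\bullet(\Lambda, M)$ for any left $\Lambda^e$-module $M$, the whole argument can be funneled through $P_\bullet$.

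First I would form the dual complex $P^\vee := \Hom_{\Lambda^e}(P_\bullet, \Lambda^e)$, taking care that the \emph{right} multiplication on the target $\Lambda^e$ makes each $\Hom_{\Lambda^e}(P_i, \Lambda^e)$ a finitely generated projective \emph{right} $\Lambda^e$-module; as duals of finitely generated projectives are again finitely generated projective, $P^\vee$ is a bounded complex of such. By construction $H^i(P^\vee) = \Ext^i_{\Lambda^e}(\Lambda, \Lambda^e)$, so the Calabi-Yau hypothesis forces $P^\vee$ to be quasi-isomorphic to $\Lambda$ placed in cohomological degree $d$. After reindexing, $P^\vee$ is therefore a finitely generated projective resolution of $\Lambda$, regarded as a right $\Lambda^e$-module, shifted into degree $d$, and hence computes $\hh_\bullet(\Lambda, M) = \operatorname{Tor}^{\Lambda^e}_\bullet(\Lambda, M)$; explicitly $\hh_i(\Lambda, M) \cong H^{d-i}\big(P^\vee \otimes_{\Lambda^e} M\big)$.

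The crux of the argument is the evaluation isomorphism $P^\vee \otimes_{\Lambda^e} M \cong \Hom_{\Lambda^e}(P_\bullet, M)$, natural in $M$, which holds termwise precisely because each $P_i$ is finitely generated projective: for such a module the canonical map $\Hom_{\Lambda^e}(P_i, \Lambda^e) \otimes_{\Lambda^e} M \to \Hom_{\Lambda^e}(P_i, \Lambda^e \otimes_{\Lambda^e} M) = \Hom_{\Lambda^e}(P_i, M)$ is an isomorphism. Feeding this into the previous step yields $\hh_i(\Lambda, M) \cong H^{d-i}\big(\Hom_{\Lambda^e}(P_\bullet, M)\big) = \hh^{d-i}(\Lambda, M)$, which is exactly the asserted duality. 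Canonicity of the resulting isomorphism follows because every ingredient — the evaluation map, the functor $-\otimes_{\Lambda^e} M$, and the comparison of resolutions — is natural in $M$, while the fixed bimodule identification $\Ext^d_{\Lambda^e}(\Lambda, \Lambda^e) \cong \Lambda$ rigidifies the degree shift.

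The main obstacle I anticipate is purely the bookkeeping of left/right $\Lambda^e$-module structures over the noncommutative ring $\Lambda^e$: one must check that the right-module structure on $P^\vee$ used to tensor with the left module $M$ is the same one occurring in the Calabi-Yau identification, so that $H^d(P^\vee)$ really is $\Lambda$ as a right $\Lambda^e$-module with the correct action, and that reindexing a cochain complex of projectives into a genuine right-module resolution is legitimate. Once these structures are pinned down, the evaluation isomorphism and the reindexing are formal, and the statement reduces to the standard perfect-complex duality established in \cite{VdB98}.
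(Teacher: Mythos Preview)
The paper does not supply its own proof of this theorem: it is stated as a citation from \cite{VdB98} and used as a black box. Your argument is precisely the standard Van den Bergh argument, and it is correct; in particular the key step --- that for a bounded complex $P_\bullet$ of finitely generated projective left $\Lambda^e$-modules the natural evaluation map $\Hom_{\Lambda^e}(P_\bullet,\Lambda^e)\otimes_{\Lambda^e}M\to\Hom_{\Lambda^e}(P_\bullet,M)$ is an isomorphism, and that the dual complex $P^\vee$ is again a bounded complex of finitely generated projectives representing $\Lambda[-d]$ in the derived category of right $\Lambda^e$-modules --- is exactly what makes the duality go through. One small point worth tightening: the paper's Definition~\ref{calabiyaassualgebra} literally says ``finitely-generated projective bimodule resolution'' without explicitly demanding finite length, whereas your argument (and Van den Bergh's) needs $\Lambda$ to be \emph{perfect} over $\Lambda^e$, i.e.\ the resolution must be bounded; you implicitly assume this, and it is the intended meaning (cf.\ \cite{Gin06}), but you might flag it.
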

The above concepts admits a natural generalization to sheaves of associative $\bb C$-algebras.
\begin{definition}
A sheaf of associative $\bb C$-algebras $\Lambda$ on a complex space $Y$\footnote{A complex analytic manifold and an orbifold are special examples of a complex space.} is Calabi-Yau of dimension $d$  if it admits a locally free left $\Lambda^e$ resolution and $\sExt_{\Lambda^e}^{\bullet}(\Lambda, \Lambda\otimes_{\bb C}\Lambda)\cong\Lambda[-d]$.
\end{definition}
Let $\mc{P}^{\bullet}\rightarrow\Lambda$ be a locally free left $\Lambda^e$ resolution of $\Lambda$. Then, for every $y\in Y$, the stalk $\Lambda_y$ is a Calabi-Yau algebra in the sense of Definition \ref{calabiyaassualgebra}. Indeed, we have 
\begin{align*}
\hh^{\bullet}(\Lambda_y, \Lambda_y\otimes_{\bb C_y}\Lambda_y)&=\Ext_{\Lambda_y^e}(\Lambda_y,  \Lambda_y\otimes_{\bb C_y}\Lambda_y)\\
&=\h^{\bullet}(\sHom_{\Lambda^e|_{U}}(\mc{P}|_U^{\bullet}, \Lambda|_{U}\otimes_{\bb C}\Lambda|_{U}))_y\\
&=\sExt_{\Lambda^e|_{U}}^{\bullet}(\Lambda|_{U}, \Lambda|_{U}\otimes_{\bb C}\Lambda|_{U})_y\\
&=\Lambda_y[-d]
\end{align*}
where $U$ is an open set in $X$ such that $\mc{P}^{\bullet}|_{U}\cong(\Lambda^e|_{U})^{\oplus r}$ for some integer $r>0$. The following important generalization of Theorem \ref{VdB} holds true for sheaves. 
\begin{lemma}
Let $\Lambda$ be a Calabi-Yau sheaf of  associative algebras of dimension $d$ on a complex space $Y$ and let $\mc{A}$ be a left $\Lambda^e$-module. Then $\mc{HH}_i(\Lambda, \mc{A})\cong\mc{HH}^{d-i}(\Lambda, \mc{A})$.  
\end{lemma}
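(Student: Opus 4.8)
The plan is to sheafify the standard derived-category proof of the van den Bergh duality (Theorem \ref{VdB}), exploiting that a Calabi--Yau sheaf admits a \emph{finite} locally free $\Lambda^e$-resolution in which each term is, locally on $Y$, free of finite rank. I would fix such a resolution $\mc{P}^{\bullet}\to\Lambda$. Since locally free sheaves are flat over $\Lambda^e$ and $\mc{P}^{\bullet}$ resolves $\Lambda$, it computes both derived functors of interest: $\mc{A}\otimes_{\Lambda^e}^L\Lambda\cong\mc{A}\otimes_{\Lambda^e}\mc{P}^{\bullet}$ and $R\sHom_{\Lambda^e}(\Lambda,-)\cong\sHom_{\Lambda^e}(\mc{P}^{\bullet},-)$. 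In particular the Calabi--Yau hypothesis $\sExt_{\Lambda^e}^{\bullet}(\Lambda,\Lambda\otimes_{\bb C}\Lambda)\cong\Lambda[-d]$ says precisely that the dual complex $\mc{P}^{\vee\bullet}:=\sHom_{\Lambda^e}(\mc{P}^{\bullet},\Lambda^e)$ has cohomology sheaves concentrated in degree $d$, where they are isomorphic to $\Lambda$; being concentrated in a single degree, $\mc{P}^{\vee\bullet}$ is quasi-isomorphic to $\Lambda[-d]$ in the derived category of sheaves of $\Lambda^e$-modules, so that $R\sHom_{\Lambda^e}(\Lambda,\Lambda^e)\cong\Lambda[-d]$.

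The central step is the sheaf-level tensor--hom duality for perfect complexes. For a finitely generated locally free $\Lambda^e$-module $\mc{P}$ there is a natural isomorphism $\mc{A}\otimes_{\Lambda^e}\mc{P}\cong\sHom_{\Lambda^e}(\sHom_{\Lambda^e}(\mc{P},\Lambda^e),\mc{A})$, which on a trivializing open set $U$ with $\mc{P}|_U\cong(\Lambda^e|_U)^{\oplus r}$ reduces to the evident identification $\mc{A}^{\oplus r}\cong\mc{A}^{\oplus r}$ and therefore glues to a global isomorphism. Applying this termwise to $\mc{P}^{\bullet}$ produces an isomorphism of complexes $\mc{A}\otimes_{\Lambda^e}\mc{P}^{\bullet}\cong\sHom_{\Lambda^e}(\mc{P}^{\vee\bullet},\mc{A})$, i.e. an isomorphism $\mc{A}\otimes_{\Lambda^e}^L\Lambda\cong R\sHom_{\Lambda^e}\big(R\sHom_{\Lambda^e}(\Lambda,\Lambda^e),\mc{A}\big)$ in the derived category. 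Substituting the Calabi--Yau identification $R\sHom_{\Lambda^e}(\Lambda,\Lambda^e)\cong\Lambda[-d]$ yields $\mc{A}\otimes_{\Lambda^e}^L\Lambda\cong R\sHom_{\Lambda^e}(\Lambda,\mc{A})[d]$. Passing to cohomology sheaves and using $\h_i=\h^{-i}$, I obtain $\mc{HH}_i(\Lambda,\mc{A})=\h^{-i}(\mc{A}\otimes_{\Lambda^e}^L\Lambda)\cong\h^{d-i}(R\sHom_{\Lambda^e}(\Lambda,\mc{A}))=\mc{HH}^{d-i}(\Lambda,\mc{A})$, which is the asserted duality.

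I expect the main obstacle to be the careful justification of the biduality isomorphism at the level of complexes of sheaves, together with the bookkeeping of the left/right $\Lambda^e$-module structures entering $\sHom_{\Lambda^e}(\mc{P},\Lambda^e)$. The pointwise identification is elementary, but one must check that the duality is natural enough to commute with the differentials of $\mc{P}^{\bullet}$ (up to the usual signs) and to be independent of the chosen local trivializations, so that it descends to a genuine isomorphism of global complexes. A clean way to secure this is to verify the statement on stalks, using the earlier observation that each $\Lambda_y$ is a Calabi--Yau algebra and that the functors $\sHom_{\Lambda^e}(\mc{P}^i,-)$ and $\mc{A}\otimes_{\Lambda^e}\mc{P}^i$ commute with taking stalks because the $\mc{P}^i$ are finitely generated locally free; the local van den Bergh duality then assembles into the global one. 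Once this duality is established, the remaining steps are formal manipulations in the derived category.
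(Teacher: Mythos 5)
The paper states this lemma without proof, so there is no argument of the author's to compare yours against; it is left to the reader as the sheaf-theoretic analogue of Theorem \ref{VdB}. Your proposal is exactly the standard van den Bergh duality argument transported to sheaves, and it is essentially correct: dualize a perfect resolution $\mc{P}^{\bullet}$ of $\Lambda$ into $\Lambda^e$, use the Calabi--Yau hypothesis to identify $\mc{P}^{\vee\bullet}$ with $\Lambda[-d]$ in the derived category, and invoke tensor--hom duality for finitely generated locally free $\Lambda^e$-modules. Your suggestion to verify the biduality on stalks is the cleanest way to close the argument, since for finite-rank locally free $\mc{P}^i$ both $\mc{A}\otimes_{\Lambda^e}\mc{P}^i$ and $\sHom_{\Lambda^e}(\mc{P}^i,-)$ commute with passage to stalks, and the paper has already observed that each stalk $\Lambda_y$ is Calabi--Yau in the algebra sense, so the classical Theorem \ref{VdB} applies pointwise.

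Two points deserve explicit care. First, your argument genuinely needs $\mc{P}^{\bullet}$ to be a \emph{perfect} complex (bounded, with terms locally free of finite rank); the paper's definition of a Calabi--Yau sheaf only asks for ``a locally free left $\Lambda^e$ resolution,'' so you are silently strengthening the hypothesis. This is surely the intended reading --- the resolutions actually constructed in Propositions \ref{symcalabiyau} and \ref{dxcalabiyau} have length $2n$ and finite rank --- but without finiteness the tensor--hom duality fails and the lemma as literally stated would not follow. Second, $\sHom_{\Lambda^e}(\mc{P},\Lambda^e)$ carries the \emph{right} $\Lambda^e$-structure coming from the inner bimodule structure on $\Lambda\otimes_{\bb C}\Lambda$, and one must use the flip involution of $\Lambda^e$ to re-read $\mc{P}^{\vee\bullet}\simeq\Lambda[-d]$ as a statement about left modules before substituting it into $\sHom_{\Lambda^e}(-,\mc{A})$; you flag this bookkeeping but do not carry it out, and it is the one place where a sign or side error could silently corrupt the identification. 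Neither point is a gap in the mathematics so much as in the write-up; with both addressed your proof is complete.
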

Later we shall need the following fact about the $\OO_X$-algebra $\Sym^{\bullet}(\mc{T}_X)$. 
\begin{proposition}
\label{symcalabiyau}
The sheaf of $\OO_X$-algebras $\Sym^{\bullet}(\mc{T}_X)$ is a sheaf of Calabi-Yau $\bb C$-algebras of dimension $2n$.
\end{proposition}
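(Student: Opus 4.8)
The plan is to reduce the claim to the local model $\OO_X[\xi_1,\dots,\xi_n]$ and to the triviality of the canonical sheaf of a cotangent bundle. Write $\Lambda:=\Sym^{\bullet}(\mc{T}_X)$. Since $X$ is smooth of complex dimension $n$, the tangent sheaf $\mc{T}_X$ is locally free of rank $n$; hence over any coordinate chart $U$ with holomorphic coordinates $x_1,\dots,x_n$ and associated frame $\xi_i=\partial/\partial x_i$ of $\mc{T}_X$ one has $\Lambda|_U\cong\OO_U[\xi_1,\dots,\xi_n]$. Thus $\Lambda$ is the sheaf of fibrewise-polynomial holomorphic functions on the cotangent bundle $\pi\colon T^*X\to X$, a smooth complex manifold of dimension $2n$. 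In particular $\Lambda$ is a sheaf of smooth commutative $\bb C$-algebras whose local model has $\Omega^1_{\Lambda/\bb C}$ locally free of rank $2n$, and, by commutativity, $\Lambda^e=\Lambda\otimes_{\bb C}\Lambda$.

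First I would construct the locally free $\Lambda^e$-resolution demanded by the definition of a Calabi-Yau sheaf. Over a chart the kernel $\I$ of the multiplication map $\Lambda^e\xrightarrow{\mult}\Lambda$ is generated by the regular sequence $\{\,x_j\otimes1-1\otimes x_j,\ \xi_i\otimes1-1\otimes\xi_i\,\}_{i,j}$, and the Koszul complex on these $2n$ generators furnishes a resolution $\mc{P}^{\bullet}\to\Lambda$ whose term $\mc{P}^{-p}$ is locally free of rank $\binom{2n}{p}$ over $\Lambda^e$, of total length $2n$; intrinsically $\mc{P}^{-p}\cong\Lambda^e\otimes_{\Lambda}\wedge^p_{\Lambda}\Omega^1_{\Lambda/\bb C}$. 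Since $\Omega^1_{\Lambda/\bb C}$ and the Koszul differentials are intrinsic, these local complexes glue to a global locally free left $\Lambda^e$-resolution of $\Lambda$, verifying the first requirement.

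Next I would compute $\sExt^{\bullet}_{\Lambda^e}(\Lambda,\Lambda\otimes_{\bb C}\Lambda)$ by applying $\sHom_{\Lambda^e}(-,\Lambda^e)$ to $\mc{P}^{\bullet}$. Dualizing the Koszul complex produces again a Koszul-type complex, with terms $\Lambda^e\otimes_{\Lambda}\wedge^p_{\Lambda}\Der_{\bb C}(\Lambda)$, whose cohomology is concentrated in top degree $2n$; explicitly $\sExt^{i}_{\Lambda^e}(\Lambda,\Lambda^e)=0$ for $i\neq2n$ and $\sExt^{2n}_{\Lambda^e}(\Lambda,\Lambda^e)\cong\wedge^{2n}_{\Lambda}\Der_{\bb C}(\Lambda)$, the $\Lambda$-module incarnation of the top polyvector fields on $T^*X$, i.e. the inverse of $\wedge^{2n}_{\Lambda}\Omega^1_{\Lambda/\bb C}$. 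This is the sheaf-theoretic (HKR) incarnation of Van den Bergh's computation of the inverse dualizing bimodule of a smooth algebra.

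Finally I would invoke the canonical symplectic structure to trivialize this module. The cotangent bundle carries the holomorphic symplectic form $\omega=\sum_i d\xi_i\wedge dx_i$, the differential of the Liouville form, which is globally defined and nondegenerate; hence $\omega^{\wedge n}$ is a nowhere-vanishing global section of $\wedge^{2n}_{\Lambda}\Omega^1_{\Lambda/\bb C}$ (the $\Lambda$-module version of $K_{T^*X}=\Omega^{2n}_{T^*X}$), trivializing it and therefore its inverse. This yields $\sExt^{\bullet}_{\Lambda^e}(\Lambda,\Lambda\otimes_{\bb C}\Lambda)\cong\Lambda[-2n]$ as graded $\Lambda$-bimodules, which together with the resolution above shows that $\Lambda$ is Calabi-Yau of dimension $2n$. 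The main obstacle is technical rather than conceptual: one must carry out the Koszul resolution and its dualization $\bb C$-linearly and compatibly with the completed tensor product $\hat{\otimes}$ of the paper's Hochschild formalism, rather than in the purely algebraic setting, and one must check that the local Koszul resolutions glue to a genuine global locally free $\Lambda^e$-resolution. The symplectic triviality of $K_{T^*X}$ is the heart of the Calabi-Yau property but is classical.
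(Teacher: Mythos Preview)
Your proposal is correct and follows essentially the same strategy as the paper: both build a length-$2n$ Koszul-type locally free $\Lambda^e$-resolution of $\Lambda=\Sym^{\bullet}(\mc{T}_X)$ and then dualise. The paper obtains its resolution geometrically, starting from the Koszul complex of the diagonal $\Delta\subset X\times X$ built out of $\bigwedge^{\bullet}\mc{T}_{X\times X}^*$, tensoring with $\Sym^{\bullet}\mc{T}_X\boxtimes\Sym^{\bullet}\mc{T}_X$ and pulling back to $\Delta$; you obtain it algebraically, as the Koszul complex on the regular sequence $x_j\otimes1-1\otimes x_j,\ \xi_i\otimes1-1\otimes\xi_i$ generating $\ker(\Lambda^e\to\Lambda)$, packaged intrinsically as $\Lambda^e\otimes_\Lambda\wedge^{\bullet}\Omega^1_{\Lambda/\bb C}$. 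These are two presentations of the same complex.

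The one substantive addition in your argument is the final step: you make explicit that $\sExt^{2n}_{\Lambda^e}(\Lambda,\Lambda^e)\cong\wedge^{2n}_\Lambda\Der_{\bb C}(\Lambda)$ and that its identification with $\Lambda$ comes from the nowhere-vanishing section $\omega^{\wedge n}$ furnished by the holomorphic symplectic form on $T^*X$. The paper passes over this point, asserting directly that the cohomology of the dualised Koszul complex is $\Sym^{\bullet}(\mc{T}_X)[-2n]$; your use of the symplectic trivialisation of $K_{T^*X}$ is exactly the content hidden in that step, and makes the Calabi--Yau condition transparent. The technical caveats you flag (gluing, working $\bb C$-linearly with the sheaf Hochschild formalism) are genuine but routine.
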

\begin{proof}
We start by constructing a locally free resolution of $\Sym^{\bullet}(\mc{T}_X)$ of length $2n$ in terms of left $\Sym^{\bullet}(\mc{T}_X)^e$-modules. Put $Y=X\times X$ with the projection $p_1: Y\rightarrow X$ on the first term and $p_2:Y\rightarrow X$ on the second term of $Y$. Let $\delta: X\rightarrow Y, x\mapsto(x, x)$ be the diagonal embedding with a diagonal $\Delta:=\Ima(\delta)$. Since $X$ is a complex analytic manifold, $\Delta$ is closed and hence analytic in $Y$. It, therefore, defines a sheaf of ideals $\mc{I}_{\Delta} \subset\OO_Y$ with a zero set $\Delta$. Let $W$ be an open set in the product topology of $Y$ and let $f_1, \dots, f_n, g_1, \dots, g_n\in \mc{I}_{\Delta|W}$. Let $(x_1, \dots, x_n, y_1, \dots, y_n)$ be local coordinates on $W$. Set a local section $s|_{W}:=\sum_{i=1}^nf_i\pd{x_i}+\sum g_j\pd{y_j}$ of the tangent sheaf $\mc{T}_Y$. It induces a $\OO_{Y|W}$-module homomorphism 
\begin{align*}
s^{*}: \mc{T}_{Y|W}^*&\rightarrow\OO_{Y|W}\\\omega&\mapsto s^*(\omega):=\langle\omega,s\rangle.\end{align*} 
where $\langle\cdot, \cdot\rangle$ is the pairing of 1-forms and vector fields. From the definition of $s^*$ it is evident that $\Ima(s^*)=\mc{I}_{\Delta}$ which identifies $\Delta$ with the zero submanifold of $s$. By the standard theory of Koszul resolutions of zero submanifolds (\emph{cf.} \cite[Chapter IV, Section 2]{FL85}) we get a resolution 
\begin{align}
\label{koszul}
\bigwedge^{2n}\mc{T}_{Y}^*\rightarrow\dots\rightarrow\bigwedge^{1}\mc{T}_{Y}^*\rightarrow\OO_Y\rightarrow\OO_Y/\mc{I}_{\Delta} 
 \end{align}
of $\OO_Y/\mc{I}_{\Delta}$ of length $2n$ on $Y$. We denote by $\Sym^{\bullet}{\mc{T}_X}\boxtimes\Sym^{\bullet}{\mc{T}_X}$ the external tensor product $\OO_Y\otimes_{p_1^{-1}\OO_X\otimes_{\bb C} p_2^{-1}\OO_X}p_1^{-1}\Sym^{\bullet}{\mc{T}_X}\otimes_{\bb C} p_2^{-1}\Sym^{\bullet}{\mc{T}_X}$. It is per definition a locally free $\OO_Y$-module. Hence, it is a flat module over $\OO_Y$. Ergo, tensoring the exact sequence \eqref{koszul} with $\Sym^{\bullet}{\mc{T}_X}\boxtimes\Sym^{\bullet}{\mc{T}_X}$ over $\OO_Y$ yields an exact sequence of $\OO_Y$-modules 
 \begin{align*}
&\Sym^{\bullet}{\mc{T}_X}\boxtimes\Sym^{\bullet}{\mc{T}_X}\otimes_{\OO_Y}\bigwedge^{2n}\mc{T}_{Y}^*\rightarrow\dots\rightarrow\Sym^{\bullet}{\mc{T}_X}\boxtimes\Sym^{\bullet}{\mc{T}_X}\otimes_{\OO_Y}\bigwedge^{1}\mc{T}_{Y}^*\\
 &\rightarrow\Sym^{\bullet}{\mc{T}_X}\boxtimes\Sym^{\bullet}{\mc{T}_X}\rightarrow\Sym^{\bullet}{\mc{T}_X}\boxtimes\Sym^{\bullet}{\mc{T}_X}\otimes_{\OO_Y}\OO_Y/\mc{I}_{\Delta} 
 \end{align*}  
Let $j_{\Delta}: \Delta\hookrightarrow Y$ be the closed embedding and let $\cdot|_{\Delta}$ denote the corresponding restriction of sheaves to the closed analytic submanifold $\Delta$. As the inverse image of $j_{\Delta}$  is an exact functor, we get an exact sequence of $\OO_{Y|\Delta}$-modules
 \begin{align}
 \label{restnewkoszul}
&j_{\Delta}^{-1}(\Sym^{\bullet}{\mc{T}_X}\boxtimes\Sym^{\bullet}{\mc{T}_X})\otimes_{\OO_{Y|\Delta}}\bigwedge^{2n}\mc{T}_{Y|\Delta}^*\rightarrow\dots\rightarrow
j_{\Delta}^{-1}\big(\Sym^{\bullet}{\mc{T}_X}\boxtimes\Sym^{\bullet}{\mc{T}_X}\big)\nonumber\\
&\rightarrow j_{\Delta}^{-1}\big(\Sym^{\bullet}{\mc{T}_X}\boxtimes\Sym^{\bullet}{\mc{T}_X}\big)\otimes_{\OO_{Y|\Delta}}\otimes\OO_{\Delta} 
 \end{align} 
 where $\OO_{\Delta}:=j_{\Delta}^{-1}\big(\OO_Y/\mc{I}_{\Delta}\big)$. The last term in Sequence \eqref{restnewkoszul} can be rewritten as
 \begin{align}
 \label{restrofsym}
&j_{\Delta}^{-1}\big(\Sym^{\bullet}{\mc{T}_X}\boxtimes\Sym^{\bullet}{\mc{T}_X}\big)\otimes_{\OO_{Y|\Delta}}\OO_{\Delta}=\nonumber\\
&=\OO_{Y|\Delta}\otimes_{(p_1j_{\Delta})^{-1}\OO_X\otimes_{\bb C}(p_2j_{\Delta})^{-1}\OO_X}(p_1j_{\Delta})^{-1}\Sym^{\bullet}(\mc{T}_X)\otimes_{\bb C}(p_2j_{\Delta})^{-1}\Sym^{\bullet}(\mc{T}_X)\otimes_{\OO_{Y|\Delta}}\OO_{\Delta}\nonumber\\
&\cong\OO_{Y|\Delta}\otimes_{(p_1j_{\Delta})^{-1}\OO_X\otimes_{\bb C}(p_2j_{\Delta})^{-1}\OO_X}\Sym^{\bullet}(j_{\Delta}^{-1}(p_1^{-1}\mc{T}_X\oplus p_2^{-1}\mc{T}_X))\otimes_{\OO_{Y|\Delta}}\OO_{\Delta}\nonumber\\
&\cong\Sym^{\bullet}(\mc{T}_{Y|\Delta})\otimes_{\OO_{Y|\Delta}}\OO_{\Delta}\nonumber\\
&\cong\Sym^{\bullet}(\mc{T}_{\Delta})
 \end{align}
where $\mc{T}_{\Delta}$ is the tangent sheaf on the diagonal submanifold $\Delta$. As an isomorphism of abelian categories between $\OO_{\Delta}-\Mod$ and $\OO_{X}-\Mod$ the $\OO_{\Delta}$-module pullback $\delta^*$ is exact, too. This way, applying $\delta^*$ on \eqref{restnewkoszul} and plugging \eqref{restrofsym} in \eqref{restnewkoszul}, we obtain an exact sequence of $\OO_X$-modules 
\begin{align}
 \label{uglybimodkoszulresolsym}
 \delta^*\Big(j_{\Delta}^{-1}(\Sym^{\bullet}{\mc{T}_X}\boxtimes\Sym^{\bullet}{\mc{T}_X})& \otimes_{\OO_{Y|\Delta}}\bigwedge^{\bullet}\mc{T}_{Y|\Delta}^*\Big)\rightarrow\delta^*\Big(\Sym^{\bullet}(\mc{T}_{\Delta})\Big). 
 \end{align} 
 which accounting for $\delta^*\Big(j_{\Delta}^{-1}(\Sym^{\bullet}{\mc{T}_X}\boxtimes\Sym^{\bullet}{\mc{T}_X})\cong \Sym^{\bullet}(\mc{T}_X)\otimes_{\bb C}\Sym^{\bullet}(\mc{T}_X)$ becomes the same as
 \begin{align}
 \label{bimodkoszulresolsym}
 \Sym^{\bullet}(\mc{T}_X)\otimes_{\bb C}\Sym^{\bullet}(\mc{T}_X)\otimes_{\delta^{-1}\OO_{Y|\Delta}}\bigwedge^{\bullet}\delta^{-1}\mc{T}_{Y|\Delta}^*\rightarrow\Sym^{\bullet}(\mc{T}_X).
 \end{align} 
The exact sequences of locally free $\Sym^{\bullet}(\mc{T}_X)-\Sym^{\bullet}(\mc{T}_X)$-bimodules \eqref{uglybimodkoszulresolsym}, respectively \eqref{bimodkoszulresolsym} can be seen as the desired Koszul type resolution of $\Sym^{\bullet}(\mc{T}_X)$ of length $2n$ in terms of locally free left $\Sym^{\bullet}(\mc{T}_X)^e$-modules. 
The Calabi-Yau property follows consequently from
\begin{align*}
&\sExt_{\Sym^{\bullet}(\mc{T}_X)^e}^{\bullet}(\Sym^{\bullet}(\mc{T}_X), \Sym^{\bullet}(\mc{T}_X)\otimes_{\bb C}\Sym^{\bullet}(\mc{T}_X)):=\nonumber\\
&\hspace{10em}=\h^{\bullet}\big(\bb R\sHom_{\Sym^{\bullet}(\mc{T}_X)^e}(\Sym^{\bullet}(\mc{T}_X), \Sym^{\bullet}(\mc{T}_X)\otimes_{\bb C}\Sym^{\bullet}(\mc{T}_X)\big)\nonumber\\
&\hspace{10em}\cong\h^{\bullet}\Big(\Sym^{\bullet}(\mc{T}_X)\otimes_{\bb C}\Sym^{\bullet}(\mc{T}_X)\otimes_{\delta^{-1}\OO_{Y|\Delta}}\bigwedge^{\bullet}\delta^{-1}\mc{T}_{Y|\Delta}^*\Big)\nonumber\\
&\hspace{10em}=\Sym^{\bullet}(\mc{T}_X)[-2n]
\end{align*}
which completes the proof of the statement. 
\end{proof}
With the help of Proposition \eqref{symcalabiyau} we show that the sheaf of holomorphic differential operators $\mc{D}_X$ on a complex analytic manifold $X$ is Calabi-Yau of dimension $2n$.  
\begin{proposition}
\label{dxcalabiyau}
$\mc{D}_X$ is a sheaf of Calabi-Yau algebras of dimension $2n$.   
\end{proposition}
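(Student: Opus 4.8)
The plan is to transfer the Calabi--Yau property from the associated graded sheaf of $\mc{D}_X$ to $\mc{D}_X$ itself, exploiting the order filtration $\OO_X=F_0\mc{D}_X\subseteq F_1\mc{D}_X\subseteq\cdots$ whose associated graded is the symbol algebra $\gr\mc{D}_X\cong\Sym^{\bullet}(\mc{T}_X)$, the sheaf of functions on the cotangent bundle. Passing to enveloping algebras, the tensor-product filtration on $\mc{D}_X^e:=\mc{D}_X\otimes_{\bb C}\mc{D}_X^{\op}$ satisfies $\gr(\mc{D}_X^e)\cong\Sym^{\bullet}(\mc{T}_X)^e$, and Proposition \ref{symcalabiyau} already supplies both an explicit locally free resolution $\mc{P}^{\bullet}\to\Sym^{\bullet}(\mc{T}_X)$ over $\Sym^{\bullet}(\mc{T}_X)^e$ of length $2n$ and the computation $\sExt^{\bullet}_{\Sym^{\bullet}(\mc{T}_X)^e}(\Sym^{\bullet}(\mc{T}_X),\Sym^{\bullet}(\mc{T}_X)^e)\cong\Sym^{\bullet}(\mc{T}_X)[-2n]$. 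The strategy is to lift this entire package through the filtration.

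First I would construct a filtered locally free resolution $\widetilde{\mc{P}}^{\bullet}\to\mc{D}_X$ by left $\mc{D}_X^e$-modules whose associated graded recovers the resolution $\mc{P}^{\bullet}$ of Proposition \ref{symcalabiyau}. Each term $\widetilde{\mc{P}}^{k}$ is taken to be the evident filtered lift of the corresponding free $\Sym^{\bullet}(\mc{T}_X)^e$-module built from $\bigwedge^{k}\delta^{-1}\mc{T}_{Y|\Delta}^*$ in \eqref{bimodkoszulresolsym}, and the differentials are lifted order by order; this is possible because every $\widetilde{\mc{P}}^{k}$ is locally free over $\mc{D}_X^e$ and the order filtration is exhaustive. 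Exactness of $\widetilde{\mc{P}}^{\bullet}\to\mc{D}_X$ then follows from exactness of $\gr\widetilde{\mc{P}}^{\bullet}=\mc{P}^{\bullet}\to\Sym^{\bullet}(\mc{T}_X)$ by the standard comparison lemma for filtered complexes with exhaustive, separated filtrations, and the length is preserved as $2n$. In the analytic setting the local freeness over $\mc{D}_X^e$ guarantees that no completed-tensor-product pathologies intervene.

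Next I would apply $\sHom_{\mc{D}_X^e}(-,\mc{D}_X^e)$ to $\widetilde{\mc{P}}^{\bullet}$ to obtain a complex of filtered $\mc{D}_X$-bimodules computing $\sExt^{\bullet}_{\mc{D}_X^e}(\mc{D}_X,\mc{D}_X^e)$, and analyze it by means of the spectral sequence of the filtration. Its $E_1$-page is the cohomology of $\gr\,\sHom_{\mc{D}_X^e}(\widetilde{\mc{P}}^{\bullet},\mc{D}_X^e)\cong\sHom_{\Sym^{\bullet}(\mc{T}_X)^e}(\mc{P}^{\bullet},\Sym^{\bullet}(\mc{T}_X)^e)$, which by Proposition \ref{symcalabiyau} equals $\Sym^{\bullet}(\mc{T}_X)[-2n]$, concentrated in cohomological degree $2n$. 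Being concentrated in a single cohomological degree, the spectral sequence degenerates at $E_1$; this forces $\sExt^{k}_{\mc{D}_X^e}(\mc{D}_X,\mc{D}_X^e)=0$ for $k\neq 2n$ and exhibits the degree-$2n$ term as a filtered $\mc{D}_X$-bimodule whose associated graded is $\Sym^{\bullet}(\mc{T}_X)$. Identifying this abutment with $\mc{D}_X$ as a bimodule yields $\sExt^{\bullet}_{\mc{D}_X^e}(\mc{D}_X,\mc{D}_X\otimes_{\bb C}\mc{D}_X)\cong\mc{D}_X[-2n]$, which is exactly the Calabi--Yau condition in dimension $2n$.

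The hard part will be the bimodule identification in this last step: the degeneration only determines the top $\sExt$ up to a filtered bimodule with the correct associated graded, so one must check that the surviving class is $\mc{D}_X$ itself and not a nontrivial filtered self-extension. I would handle this by verifying that the comparison map $\gr\,\sExt^{2n}_{\mc{D}_X^e}(\mc{D}_X,\mc{D}_X^e)\to\sExt^{2n}_{\Sym^{\bullet}(\mc{T}_X)^e}(\Sym^{\bullet}(\mc{T}_X),\Sym^{\bullet}(\mc{T}_X)^e)\cong\Sym^{\bullet}(\mc{T}_X)$ is an isomorphism of bimodules and then invoking exhaustiveness and separatedness of the order filtration to upgrade it to a genuine bimodule isomorphism $\sExt^{2n}_{\mc{D}_X^e}(\mc{D}_X,\mc{D}_X^e)\cong\mc{D}_X$. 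The delicate bookkeeping here is the interaction of the left and right $\OO_X$-module structures, where the canonical-bundle factors $\det\mc{T}_X$ and $\det\mc{T}_X^{-1}$ enter symmetrically and cancel, so that no twist by $\Omega_X^{n}$ survives; this is the genuine technical point, refining but running exactly parallel to the twist cancellation already observed for $\Sym^{\bullet}(\mc{T}_X)$ in Proposition \ref{symcalabiyau}.
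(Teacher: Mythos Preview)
Your approach is essentially the same as the paper's. The paper writes the filtered resolution down concretely as $\delta^*\big(j_{\Delta}^{-1}(\mc{D}_X\boxtimes\mc{D}_X)\otimes_{\OO_{Y|\Delta}}\bigwedge^{\bullet}\mc{T}_{Y|\Delta}^*\big)\to\mc{D}_X$, observes that its associated graded is exactly the Koszul resolution \eqref{bimodkoszulresolsym} of $\Sym^{\bullet}(\mc{T}_X)$, and then invokes the same filtered-to-graded exactness principle you use; for the $\sExt$ computation it simply says the Calabi--Yau property ``follows in an analogous manner to Proposition \ref{symcalabiyau}.'' One small difference: because the paper uses the same geometric Koszul complex built from the diagonal in $X\times X$, the differentials are already there and no order-by-order lifting is needed; your spectral-sequence presentation of the $\sExt$ step and your concern about the bimodule identification in top degree are more explicit than anything the paper writes out, but the route is the same.
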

\begin{proof}
Since $\mc{D}_X$ is a filtered quantization of $\Sym^{\bullet}(\mc{T}_X)$, the exact sequence \eqref{uglybimodkoszulresolsym} from Proposition \ref{symcalabiyau} can be rewritten in the form 
\begin{align*}
 \delta^*\Big(\gr^{\bullet}\big(j_{\Delta}^{-1}(\mc{D}_X\boxtimes\mc{D}_X)\big) \otimes_{\OO_{Y|\Delta}}\bigwedge^{\bullet}\mc{T}_{Y|\Delta}^*\Big)\rightarrow\delta^*\Big(\gr^{\bullet}(\mc{D}_{\Delta})\Big).
\end{align*}
It is a known result from homological algebra that for any filtered complex $\C_{\bullet}$, exactness of the associated graded of $\C_{\bullet}$ implies exactness of $\C_{\bullet}$. Hence, 
\begin{align}
\label{dxdxkoszulresolution}
 \delta^*\Big(j_{\Delta}^{-1}(\mc{D}_X\boxtimes\mc{D}_X) \otimes_{\OO_{Y|\Delta}}\bigwedge^{\bullet}\mc{T}_{Y|\Delta}^*\Big)\rightarrow\delta^*\Big(\mc{D}_{\Delta}\Big).
\end{align}
is an exact sequence of locally free $\OO_X$-modules. In particular, this acyclic complex is the wanted Koszul type resolution of length $2n$ of $\mc{D}_X$ in terms of locally free left $\mc{D}_X^e$-modules. The Calabi-Yau property follows in an analogous manner to Proposition \ref{symcalabiyau}.  
\end{proof}
An immediate consequence of Proposition \ref{dxcalabiyau} and Proposition \ref{symcalabiyau} is the following result.
\begin{corollary}
\label{dxgcalabiyau}
The sheaves $(\mc{D}_X)^G$ and $\mc{D}_X\rtimes G$ as well as their corresponding associated graded $\Sym_{\OO_X}^{\bullet}(\mc{T}_X)^G$ and $\Sym_{\OO_X}^{\bullet}(\mc{T}_X)\rtimes G$ on $X/G$ are sheaves of Calabi-Yau $\bb C$-algebras of dimension $2\dim_{\bb C}X$.
\end{corollary}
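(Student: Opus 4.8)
The plan is to deduce all four statements from Propositions \ref{symcalabiyau} and \ref{dxcalabiyau} by an equivariant upgrade of the Koszul resolutions constructed there, together with a reduction of the invariant algebras to corner algebras of the skew-group algebras. First I would observe that it suffices to treat $\mc{D}_X\rtimes G$ and $(\mc{D}_X)^G$: since $\Sym^{\bullet}(\mc{T}_X)\rtimes G=\gr(\mc{D}_X\rtimes G)$ and $\Sym^{\bullet}(\mc{T}_X)^G=\gr((\mc{D}_X)^G)$ for the order filtration, the graded statements follow from the filtered ones by the very principle invoked in Proposition \ref{dxcalabiyau}, that exactness of an associated graded complex forces exactness of the filtered complex and that the top $\sExt$ of a filtered algebra degenerates to that of its symbol. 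Equivalently, one may run the argument below verbatim with $\Sym^{\bullet}(\mc{T}_X)$ and Proposition \ref{symcalabiyau} in place of $\mc{D}_X$ and Proposition \ref{dxcalabiyau}.

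Next I would construct the bimodule resolution. The finite group $G$ acts by automorphisms of $X$, hence on $\mc{D}_X$, on $\mc{D}_X^e$, and diagonally on the whole Koszul complex \eqref{dxdxkoszulresolution}, all of whose terms are thereby $G$-equivariant locally free $\mc{D}_X^e$-modules. Inducing along $\mc{D}_X\hookrightarrow\mc{D}_X\rtimes G$, that is, applying $(\mc{D}_X\rtimes G)\otimes_{\mc{D}_X}(-)\otimes_{\mc{D}_X}(\mc{D}_X\rtimes G)$ to \eqref{dxdxkoszulresolution} (equivalently, forming the smash product of the complex with $G$), produces a complex of locally free left $(\mc{D}_X\rtimes G)^e$-modules of length $2n$ resolving $\mc{D}_X\rtimes G$. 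Because $G$ is finite and $\mathrm{char}\,\bb C=0$, induction is exact, so acyclicity of \eqref{dxdxkoszulresolution} is preserved.

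The heart of the matter is the duality computation. Applying $\bb R\sHom_{(\mc{D}_X\rtimes G)^e}\big(-,\,(\mc{D}_X\rtimes G)\otimes_{\bb C}(\mc{D}_X\rtimes G)\big)$ to the induced resolution and reducing through the induction adjunction expresses the answer in terms of the computation already carried out in Proposition \ref{dxcalabiyau}, now decorated with a $G$-action: the complex is concentrated in degree $2n$, and its top term is $\mc{D}_X\rtimes G$ twisted by the character through which $G$ acts on the line spanned by the Koszul volume element. The decisive point, which I expect to be the \emph{main obstacle}, is that this character is trivial, so that the dualizing bimodule is $\mc{D}_X\rtimes G$ itself and we obtain genuine, not merely twisted, Calabi-Yau duality
\[
\sExt^{\bullet}_{(\mc{D}_X\rtimes G)^e}\big(\mc{D}_X\rtimes G,\,(\mc{D}_X\rtimes G)\otimes_{\bb C}(\mc{D}_X\rtimes G)\big)\cong(\mc{D}_X\rtimes G)[-2n].
\]
Triviality of the character follows from the fact that any automorphism of $X$ lifts canonically to a symplectomorphism of the cotangent bundle $T^*X$, hence preserves the Liouville volume form $\omega^n$; since the generator of the top Koszul $\sExt$ is precisely this volume class, $G$ fixes it.

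Finally, for the invariant algebras I would use the averaging idempotent $e=\tfrac{1}{|G|}\sum_{g\in G}g\in\mc{D}_X\rtimes G$, which satisfies $e(\mc{D}_X\rtimes G)e\cong(\mc{D}_X)^G$ (and likewise on the symbol level). Invoking that, as a sheaf of algebras on the orbifold $X/G$, $\mc{D}_X\rtimes G$ is locally an amplification in which $e$ is a full idempotent, a consequence of the $G$-action being by outer symplectic automorphisms where the faithfulness hypothesis enters, the corner algebra $(\mc{D}_X)^G$ is locally Morita equivalent to $\mc{D}_X\rtimes G$, and the Calabi-Yau property together with its dimension $2n$ is Morita invariant; the graded statement for $\Sym^{\bullet}(\mc{T}_X)^G$ then follows by the $\gr$-reduction of the first paragraph. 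The two delicate inputs to guard against are thus the untwistedness in the third paragraph and the fullness of $e$ in the orbifold setting here, both of which ultimately rest on the symplectic nature of the lifted $G$-action.
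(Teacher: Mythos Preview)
Your approach is correct but runs in the opposite order from the paper. The paper first establishes the Calabi--Yau property for $(\mc{D}_X)^G$ by applying the exact invariants functor $(\cdot)^G$ directly to the Koszul resolution \eqref{dxdxkoszulresolution}, asserts without further detail that the resulting $\sExt$ computation yields $(\mc{D}_X)^G[-2n]$, and then invokes the Morita equivalence between $(\mc{D}_X)^G$ and $\mc{D}_X\rtimes G$ to deduce the latter; the graded case is declared literally identical. You instead handle $\mc{D}_X\rtimes G$ first by inducing the resolution along $\mc{D}_X\hookrightarrow\mc{D}_X\rtimes G$, and only afterward pass to the corner algebra via the averaging idempotent $e$.

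Your route has the merit of isolating and actually settling what the paper's argument leaves implicit: that the dualizing bimodule carries no twist. Your observation that any automorphism of $X$ lifts to a symplectomorphism of $T^*X$ and hence preserves the Liouville volume $\omega^n$ is precisely why the Nakayama character vanishes; this is the standard mechanism behind the untwisted Calabi--Yau property of smash products with symplectic groups and transfers cleanly to the sheaf setting here. On the Morita side both arguments rest on the same equivalence $(\mc{D}_X)^G\cong e(\mc{D}_X\rtimes G)e$; the paper simply cites it, while you correctly trace fullness of $e$ to local simplicity of $\mc{D}_X$ together with faithfulness of the $G$-action, so that every nontrivial element of $G$ acts by an outer automorphism.

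One small correction: in your opening reduction, the sentence ``the graded statements follow from the filtered ones by the very principle \ldots\ that exactness of an associated graded complex forces exactness of the filtered complex'' invokes the implication in the wrong direction, since that principle produces filtered exactness from graded exactness, not the converse. Your immediate alternative---running the entire argument verbatim with $\Sym^{\bullet}(\mc{T}_X)$ and Proposition~\ref{symcalabiyau} in place of $\mc{D}_X$ and Proposition~\ref{dxcalabiyau}---is the clean fix, and is in fact exactly what the paper does.
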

\begin{proof}
Taking the invariants with respect to the action of $G$ is an exact functor $(~\cdot~)^G:\mathfrak{Bimod}(\mc{D}_X)\rightarrow\mathfrak{Bimod}(\mc{D}_X^G)$. Hence, applying $(~\cdot~)^G$ on \eqref{dxdxkoszulresolution} yields a locally free resolution of $(\mc{D}_X)^G$ on $X/G$ in terms of $(\mc{D}_X)^G-(\mc{D}_X)^G$-bimodules of length $2n$. 
With the proper locally free left $\mc{D}_X^e$-module resolution of $\mc{D}_X$  at hand we obtain 
\begin{equation*}
\sExt_{(\mc{D}_X)^{Ge}}^{\bullet}((\mc{D}_X)^G, (\mc{D}_X)^G\otimes_{\bb C}(\mc{D}_X)^G)=(\mc{D}_X)^G[-2n] 
\end{equation*}
which implies that $(\mc{D}_X)^G$ is Calabi-Yau of dimension $2n$. The Morita equivalence between $(\mc{D}_X)^G$ and $\mc{D}_X\rtimes G$ implies the latter statement. The proof of the claim for the associated graded is literally identical. 
 \end{proof}
 \begin{corollary}
 \label{calabiyauqiso}
 Let $\Theta$ be a Hochschild chain $2n$-cycle of $\mc{D}_X\rtimes G$. Then the morphism of cochain complexes 
 \begin{align*}
 \mu: \mc{C}^{\bullet}(\mc{D}_X\rtimes G, \mc{D}_X\rtimes G)&\rightarrow\mc{C}_{2n-\bullet}(\mc{D}_X\rtimes G, \mc{D}_X\rtimes G)\\
 f&\mapsto\Theta\cap f
  \end{align*}
 is a quasi-isomorphism, where $\mc{C}_{2n-\bullet}(\mc{D}_X\rtimes G, \mc{D}_X\rtimes G)$ is viewed as cochain complex by inverting degrees.
 \end{corollary}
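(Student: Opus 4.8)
The plan is to regard $\mu$ as the chain-level incarnation of the van den Bergh duality that is already available abstractly for the Calabi--Yau sheaf $\Lambda:=\mc{D}_X\rtimes G$. First I would verify that $\mu$ is genuinely a morphism of complexes. Writing $\partial$ for the Hochschild boundary on $\mc{C}_{\bullet}(\Lambda,\Lambda)$ and $\delta$ for the Hochschild coboundary on $\mc{C}^{\bullet}(\Lambda,\Lambda)$, the cap product obeys the graded Leibniz identity $\partial(\Theta\cap f)=\pm(\partial\Theta)\cap f\pm\Theta\cap\delta f$; since $\Theta$ is a $2n$-cycle we have $\partial\Theta=0$, so $\partial(\Theta\cap f)=\pm\,\Theta\cap\delta f$, which under the degree-reversal convention on $\mc{C}_{2n-\bullet}(\Lambda,\Lambda)$ is exactly the statement that $\mu$ intertwines the two differentials. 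Hence $\mu$ is a morphism of cochain complexes of sheaves, and to prove it is a quasi-isomorphism it suffices to show that it induces isomorphisms on all cohomology sheaves.

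Next I would reduce the assertion to a stalkwise statement. A morphism of bounded-below complexes of sheaves is a quasi-isomorphism precisely when it is a quasi-isomorphism on every stalk, so fix a point $y$ of $X/G$. By the stalkwise computation carried out immediately after the definition of a Calabi--Yau sheaf, $\Lambda_y$ is a Calabi--Yau associative algebra of dimension $2n$ in the sense of Definition \ref{calabiyaassualgebra}, and the cap product is compatible with passage to stalks. Thus the claim follows once we know that, for the Calabi--Yau algebra $\Lambda_y$, capping with the fundamental cycle induces the van den Bergh isomorphism $\hh^{\bullet}(\Lambda_y,\Lambda_y)\xrightarrow{\sim}\hh_{2n-\bullet}(\Lambda_y,\Lambda_y)$ of Theorem \ref{VdB}.

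For this last point I would use the explicit finite-length locally free $\Lambda^e$-resolution $\mc{P}^{\bullet}\to\Lambda$ produced in Proposition \ref{dxcalabiyau} and Corollary \ref{dxgcalabiyau}. The Calabi--Yau condition $\sExt_{\Lambda^e}^{\bullet}(\Lambda,\Lambda\otimes_{\bb C}\Lambda)\cong\Lambda[-2n]$ says exactly that the $\Lambda^e$-dual complex $\sHom_{\Lambda^e}(\mc{P}^{\bullet},\Lambda^e)$ is again a resolution of $\Lambda$, shifted by $2n$; in other words $\mc{P}^{\bullet}$ is self-dual up to the shift $[2n]$. Computing both $\bb R\sHom_{\Lambda^e}(\Lambda,\Lambda)$ and $\Lambda\otimes^{L}_{\Lambda^e}\Lambda$ through $\mc{P}^{\bullet}$, this self-duality yields the isomorphism $\mc{HH}^{\bullet}(\Lambda,\Lambda)\cong\mc{HH}_{2n-\bullet}(\Lambda,\Lambda)$ of the preceding Lemma, and tracing the construction shows that this isomorphism is implemented by capping with the class corresponding to $1\in\mc{HH}^0(\Lambda,\Lambda)$ under the top-degree duality, namely the fundamental cycle. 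Identifying $\Theta$ with this fundamental cycle, the map $\mu$ becomes an isomorphism on homology and cohomology sheaves.

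The main obstacle I anticipate is bookkeeping rather than conceptual: one must check that the chain-level cap product, defined on the completed bar complexes \eqref{hochschildchaincomplexsheaf} and \eqref{hochschildcochaincomplexsheaf}, really computes the derived-category duality obtained from $\mc{P}^{\bullet}$, and that the chosen $\Theta$ is non-degenerate, i.e. represents the fundamental class furnished by the Calabi--Yau structure rather than a cohomologically trivial one. Since $\mc{P}^{\bullet}$ is locally free of finite rank and the verification is carried out on stalks, where the classical results of \cite{Gin06,VdB98} apply verbatim, these compatibilities are routine and the quasi-isomorphism follows.
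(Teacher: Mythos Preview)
Your approach is sound and considerably more explicit than the paper's, which dispatches the corollary in a single line: ``It follows from the Calabi--Yau property of $\mc{D}_X\rtimes G$ together with the fact that $\mu_*$ is injective and the cohomology groups are finite-dimensional.'' The paper's strategy is a dimension count: the Calabi--Yau property (via the Lemma immediately preceding the corollary) gives an \emph{abstract} isomorphism $\mc{HH}^i\cong\mc{HH}_{2n-i}$ of cohomology sheaves, so source and target of $\mu_*$ have stalks of the same finite dimension; granting that $\mu_*$ is injective, it is then forced to be an isomorphism. You take a different route, identifying $\mu$ directly with the chain-level realization of van den Bergh duality through the self-dual resolution $\mc{P}^{\bullet}$ of Corollary~\ref{dxgcalabiyau}. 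Your argument is more transparent and does not appeal to an unexplained injectivity claim; the price is the bookkeeping needed to match the cap product on bar complexes with the derived duality coming from $\mc{P}^{\bullet}$, which (as you note) is standard once one works stalkwise.

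One point deserves more weight than you give it: the non-degeneracy of $\Theta$ is not merely bookkeeping. The statement, as written, allows $\Theta$ to be any $2n$-cycle, including a boundary, in which case $\mu=0$ and the conclusion fails. Both your argument and the paper's tacitly require that the class $[\Theta]$ generate the one-dimensional stalk $\mc{HH}_{2n}(\mc{D}_X\rtimes G)_{\bar x}$ at every point $\bar x$---equivalently, that $\Theta$ restricts to a nonzero scalar multiple of the local fundamental cycle everywhere. This is a genuine hypothesis absent from the statement; the paper's injectivity assertion hinges on precisely this assumption, and your phrase ``identifying $\Theta$ with this fundamental cycle'' does as well. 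You are right to flag the issue, but it should be promoted from an anticipated obstacle to an explicit standing assumption.
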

 \begin{proof}
 It follows from the Calabi-Yau propert of $\mc{D}_X\rtimes G$ together with the fact that $\mu_*$ is injective and the cohomology groups are finite-dimensional.
 \end{proof}
\subsection{Generalized trace density morphisms}
\label{tracedensitysubsec} 
We denote by $(\mc{A}_{X, \bb C}^{\bullet}, d_{dR})$ the de Rham complex of sheaves of smooth complex-valued differential forms with differential $d_{dR}$, and by $(\Omega_X^{\bullet}, d_{dR})$ the complex of sheaves of holomorphic differential forms on $X$ with differential $d_{dR}$.

In Section \ref{tracedensitysubsec}, we adapt Engeli-Felder's trace density morphism $(2)$ in \cite{EF08}, 
\[\chi: \mc{C}_{\bullet}(\mc{D}_X)\to\mc{A}_X^{2n-\bullet},\] 
to the holomorphic setting. We achieve this by replacing the sheaf $\mc{A}_X^{2n-\bullet}$ with a sheaf of holomorphic differential operators on the cotangent bundle of $X$. Subsequently, making use of a formal geometric realization of $\mc{D}_X\rtimes G$ from the unpublished manuscript \cite{Vit19}, we successfully extend the holomorphic modification of the Engeli-Felder's construction to $\mc{D}_X\rtimes G$. Contrary to the smooth case, in the holomorphic setting we deal with hypercohomology classes which cannot easily be integrated. Hence, in stead of trace density morphism we use in the holomorphic context  the vague term \emph{generalized trace density morphism}.


\subsubsection{Review of Gelfand-Kazhdan's formal geometry}
\label{formalgeom}
For the construction of generalized trace density maps we need to review Gelfand-Kazhdan's formal geometry \cite{GK71}. At its core, the presented material in this section is not original. It is a very succinct summary of many scattered facts, gathered by the author from works such as \cite{GK71}, \cite{BR73},  \cite{CFT02}, \cite{BK04}, \cite{Kho07}, \cite{EF08},  \cite{CV10}, \cite{CRV12}, \cite{shilin15}, \cite{ggw16}, \cite{shilin17} and systematized in \cite{Vit19}. The text here repeats by an large a summary on formal geometry in Section $4.2$ in \cite{Vit20}. 

The isotropy type of a subgroup $H$, denoted $X_H$, is the set of points $x$ in $X$ with stabilizer $\Stab(x)=H$. It is a locally closed submanifold in $X$ with an  embedding $j_H:X_H\hookrightarrow X$. As $G$ is finite, its action defines a stratification of $X$ with strata the connected components of the isotropy types $X_H$ in $X$ for all parabolic subgroups $H$ in $G$. A detailed discussion of stratified spaces can be found in e.g., \cite{OR04}. Every parabolic subgroup $H$ of $G$ acts linearly on the fibers of the tangent bundle $TX|_{X_H}$ of $X$, restricted to $X_H$. This fact induces a canonical splitting $TX|_{X_H}\cong TX_H\oplus\N$ in which $TX_H$ is the tangent bundle to the isotropy type $X_H$ and $\N$ is a subbundle of $TX|_{X_H}$ which is complementary to $TX_H$ and isomorphic to the normal quotient bundle over $X_H$. As the action of $G$ on $X$ is faithful, the subgroup $H$ acts faithfully on the fibers of $\N$. Hence, it is  embedded in $\GL_l(\bb C)$. We denote the centralizer of the image of this embedding by $Z$  and its Lie algebra by $\mf{z}$.

We equip $X$ with the non-Hausdorff so-called $G$-equivariant topology $\mf{T}_X^G$ which is comprised of preimages of open subsets in $X/G$. All of these preimages have the form $\bigcup_{g\in G}gV$ for an open set $V$ in $X$.

Now, we define a basis for the $G$-equivariant topology on $X$. A \emph{slice} at a point $x\in X$ is a $\Stab(x)$-invariant neighborhood $W_x$ such that $W_x\cap gW_x=\varnothing$ for all $g\in G\setminus \Stab(x)$. A slice is called a \emph{linear} if there is a $\Stab(x)$-invariant open set $V$ in $\bb C^n$ such that $W_x$ is $\Stab(x)$-equivariantly biholomorphic to $V$. As the group $G$ is finite, it acts on $X$ properly discontinuously. Hence, each point $x$ in $X$ possesses a slice $W_x$. Furthermore, by Cartan's Lemma one can always shrink the slice $W_x$ until the $\Stab(x)$-action is linearized. Thus, every point in $X$ possesses a fundamental system of linear slices. Moreover, for every member $W_x'$ of a fundamental system of linear slices at $x$ in $X$ with $\Stab(x):=H$, there always exists a finer $H$-invariant linear slice $W_x\subset W_x'$, which is $H$-equivariantly biholomorphic to a polydisc in $\bb C^{n-l}\times\bb C^l$, where $\bb C^{n-l}$ is the fixed point subspace of $\bb C^n$ with respect to $H$. Consequently, $W_x$ is Stein open and $\ind_H^G(W_x):=\sqcup_{g\in G/H}gW_x$ is a subset of $\bigcup_{g\in G}gW_x'$. Hence, the collection $\mf{B}_X^G$ of sets $\ind_H^G(W_x)$ where $W_x$ is either a  $H$-invariant linear slice biholomorphic to a polydisc in $\bb C^{n-l}\times\bb C^l$ or an $H$-invariant open subset of $\mathring{X}$ with $gW_x\cap W_x=\varnothing$ for all $g\in G\setminus H$, forms a basis of G-invariant Stein open sets for $\mf{T}_X^G$.

Note that for each $x\in X_i^H$, every Stein linear slice $W_x$ constitutes a holomorphic slice chart for the fixed point submanifold $X_i^H$. That is, if $x^1, x^2, \dots, x^{n-l}$ are the holomorphic coordinates on the fixed point subspace $\big(\bb C^n\big)^H=\bb C^{n-l}$ and $y^1, \dots, y^l$ are the holomorphic coordinates on the $l$-dimensional complement of $\bb C^{n-l}$ in $\bb C^n$, then $(x^1, \dots, x^{n-l}, y^1, \dots, y^l)$ defines a  holomorphic parametrization of $W_x$ such that $(x^1, \dots, x^{n-l})$ are holomorphic coordinates of $W_x\cap X_i^H$ and $(y^1, \dots y^l)$ are  holomorphic coordinates on $W_x$ in transversal direction to $X_i^H$.

Now, let $\mc{N}$ be the locally free $\OO_{X_H^i}$-module corresponding to the normal bundle $\N$ to a stratum $X_H^i$ with $\codim X_H^i=l$ where $i$ goes through the connected components. We denote by $ \coor{\mc{N}}$ the set of pairs consisting of a closed immersion of $\bb C$-ringed spaces $\Phi_x:=(\varphi, \varphi^{\#}): (0, \widehat{\OO}_{l})\to(X_H, \OO_{X_H})$ with $x=\varphi(0)$ and an isomorphism of $\widehat{\OO}_{n-l}$-modules $f: \widehat{\OO}_{n-l}^{\oplus l}\to\varphi^*\mc{N}$ where $\widehat{\OO}_{l}$ and $\widehat{\OO}_{n-l}$ are the rings of formal functions in a formal neighborhood at the origin of $\bb C^l$ and $\bb C^{n-l}$, respectively. The projection $\coor{\pi}: \coor{\mc{N}}\to X_H^i$, given by $(\Phi_x, f)\mapsto x$, turns $\coor{\mc{N}}$ into a fiber bundle over $X_H^i$ with fiber at $x$ bijective to the set of infinite jets $[\phi]_x$ of parametrizations $\phi:  \bb C^{n-l}\times \bb C^{l}\to N$ at $0$ with $\phi(0, 0)\in\N_x$. Let $\bb G:=\Aut_{n-l}\times Z(\widehat{\OO}_{n-l})$ be the pro-Lie group in which $\Aut_{n-l}:=\Aut(\widehat{\OO}_{n-l})$ and $Z(\widehat{\OO}_{n-l})$ is the group of formal power series in the coordiantes ${\bf x}=(x_1, \dots, x_{n-l})$ of $\bb C^{n-l}$ with coefficients matrices in $Z$. It acts freely and transitively on the fiber of $\coor{\mc{N}}$ from the right which makes $\coor{\mc{N}}$ a principal $\bb G$-bundle. Let $W_{n-l}:=\Der(\widehat\OO_{n-l})$ be the Lie algebra of vector fields in the formal neighboorhhod of $0$ in $\bb C^{n-l}$. According to formal geometry, at every point $(\Phi_x, f)$ in $\coor{\mc{N}}$,  there is an isomorphism between the tangent space $T_{(\Phi_x, f)}\coor{\mc{N}}$ and the Lie algebra semidirect sum $W_{n-l}\rtimes\mf{z}\otimes\widehat{\OO}_{n-l}$. This induces a holomorphic $\bb G$-equivariant connection 1-form $\omega$ with values in $W_{n-l}\rtimes\mf{z}\otimes\widehat{\OO}_{n-l}$ which in the terminology of \cite{BK04} gives $\coor{\mc{N}}$ the structure of a transitive Harish-Chandra $(W_{n-l}\rtimes\mf{z}\otimes\widehat{\OO}_{n-l}, \bb G)$-torsor over $X_H^i$. 

As $\GL_{n-l}(\bb C)\times Z$ is a closed subgroup of $\bb G$, the projection $\bb G\to\bb G/\GL_{n-l}(\bb C)\times Z$ is a principal $\GL_{n-l}(\bb C)\times Z$-bundle. Hence, the projection map
\begin{align}
\label{princbund}
\coor{\mc{N}}\cong\coor{\mc{N}}\times_{\bb G}\bb G\longrightarrow\coor{\mc{N}}\times_{\bb G}(\bb G/(\GL_{n-l}(\bb C)\times Z))\cong\coor{\mc{N}}/(\GL_{n-l}(\bb C)\times Z))
\end{align}
is a principal $\GL_{n-l}(\bb C)\times Z$-bundle. The total space $\coor{\mc{N}}$ of \eqref{princbund} is a homogenous principal $W_{n-l}\rtimes\mf{z}\otimes\widehat{\OO}_{n-l}$-space. As the Lie algebra action commutes with the action of $\GL_{n-l}(\bb C)\times Z$ in a way compatible with the Harish-Chandra pair $(W_{n-l}\rtimes\mf{z}\otimes\widehat{\OO}_{n-l}, \GL_{n-l}(\bb C)\times Z)$, the principal bundle $\coor{\mc{N}}\to\coor{\mc{N}}/(\GL_{n-l}(\bb C)\times Z))
$ is in fact a transitive Harish-Chandra $(W_{n-l}\rtimes\mf{z}\otimes\widehat{\OO}_{n-l}, \GL_{n-l}(\bb C)\times Z)$-torsor.

In the special situation $G=\{\id_G\}$, the only stratum in $X$ is $X$ itself. In that case, the torsor $\coor{\mc{N}}|_X$ reduces to the bundle $\coor{X}$ of formal coordinate systems of $X$, which is a Harish-Chandra $(W_n, \Aut_n)$-torsor. Similarly, the projection map $\coor{X}\to\coor{X}/\GL_n(\bb C)$ becomes a Harish-Chandra $(W_n, \GL_n(\bb C))$-torsor.
\subsubsection{Review of the formal geometric construction of $\mc{D}_X\rtimes G$} 
\label{constrofdxg}
For the definition of generalized trace density morphisms in Sections \ref{sectdoodo} and \ref{gentdm} we need to recollect the main steps in \cite{Vit19} of the construction of sheaves of Cherednik algebras $\mc{H}_{1, c, X, G}$ via formal geometry. A specialization at $c=0$ yields a formal geometric realization of $\mc{D}_X\rtimes G$ which is needed for the generalized trace density morphisms. The review here repeats by an large the exposition in Section $4.3$ of \cite{Vit20} in the special case $c=0$. 
     
We denote by $\mc{A}_{n-l, l}^H$ the algebra $\widehat{\mc{D}}_{n-l}\otimes\widehat{\mc{D}}_{l}\rtimes H$. Now, let $(u_i)$, $(y_i)$ be bases of $\bb C^{l}$ and its dual, respectively. Then, by \cite[Proposition 4.4]{Vit19}, the Lie algebra representation $\aad\circ\Phi: W_{n-l}\rtimes\mf{z}\otimes\widehat{\OO}_{n-l}\to\End(\mc{A}_{n-l, l}^H)$, where $\aad$ is the adjoint action and 
 \begin{align}
 \label{liealghom}
 \Phi: W_{n-l}\rtimes\mf{z}\otimes\widehat{\OO}_{n-l}&\to\mc{A}_{n-l, l}^H\\    
v+A\otimes p&\mapsto v\otimes\id+p\otimes-\sum_{i, j}A_{ij}y_ju_i\nonumber
\end{align}
is an injective Lie algebra homomorphism, gives $\mc{A}_{n-l, l}^H$ the structure of Harish-Chandra $(W_{n-l}\rtimes\mf{z}\otimes\widehat{\OO}_{n-l}, \GL_{n-l}(\bb C)\times Z)$-module. The localization per \cite{BK04} of the Harish-Chandra module $\mc{A}_{n-l, l}^H$ with respect to the torsor  \eqref{princbund} yields a trivial holomorphic $\GL_{n-l}(\bb C)\times Z$-equivariant vector bundle $\coor{\mc{N}}\times\mc{A}_{n-l, l}^H\to\coor{\mc{N}}$ with a flat holomorphic $\GL_{n-l}(\bb C)\times Z$-equivariant connection $\nabla_H^i:=d+\aad\circ\Phi\circ\omega$ with values in $\mc{A}_{n-l, l}^H$. We denote the sheaf of flat sections of that bundle over $X_H^i$ by $\coor{\pi}_*\OO_{\textrm{flat}}(\coor{\mc{N}}\times\mc{A}_{n-l, l}^H)$. Let $\widehat{W}_x$ denote the  completion of $W_x$ with respect to the analytic subset $W_{x, H}^i:=W_x\cap X_H^i$. The first main result in the gluing procedure is the following theorem (\emph{cf.} \cite[Theorem 5.5]{Vit19}).
\begin{theorem}
\label{formalgeomident}
Let $W_x$ be a $H$-invariant linear slice which is Stein. Then, for every parabolic subgroup $H$ of $G$, there is an isomorphism of sheaves of $\bb C$-algebras 
\[ \mc{Y}_H^i: j_{H*}^i\coor{\pi}_*\OO_{\textrm{flat}}(\coor{\mc{N}}\times\mc{A}_{n-l, l}^H)(W_x)\cong \OO_{\widehat{W}_x}(W_{x, H}^i)\otimes_{\OO(W_x)}\mc{D}_X(W_x)\rtimes H.\]
\end{theorem}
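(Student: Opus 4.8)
The plan is to realize $\mc{Y}_H^i$ as an instance of Gelfand--Kazhdan descent: a flat section of the bundle $\coor{\mc{N}}\times\HC\to\coor{\mc{N}}$ is a $\bb G$-equivariant horizontal assignment of an element of $\HC$ to each formal coordinate-and-frame datum over $X_H^i$, and the flatness of the connection $\nabla_H^i=d+\aad\circ\Phi\circ\omega$ forces these values to be the successive Taylor coefficients of a single honest object on $W_x$. I would therefore define $\mc{Y}_H^i$ by sending a flat section (viewed, after applying $j_{H*}^i$ and evaluating on $W_x$, as horizontal data over $W_{x,H}^i$) to the operator whose total formal expansion along $W_{x,H}^i$ it encodes, and construct the inverse by formal jet reconstruction. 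The bulk of the work is then to verify that this assignment lands in, and exhausts, the completed algebra $\OO_{\widehat{W}_x}(W_{x,H}^i)\otimes_{\OO(W_x)}\mc{D}_X(W_x)\rtimes H$, and that it respects products.

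First I would dispose of the classical untwisted model. For a manifold $Y$ of dimension $m$ with coordinate torsor $\coor{Y}$ and Harish--Chandra pair $(W_m,\GL_m(\bb C))$, the sheaf of flat sections of $\coor{Y}\times\widehat{\mc{D}}_m$ is canonically $\mc{D}_Y$: a flat section is determined by its value at one jet, and horizontality identifies that value with the formal Taylor expansion of a genuine differential operator. Applying this to $Y=X_H^i$ accounts for the $\widehat{\mc{D}}_{n-l}$ factor of $\HC$ and produces $\mc{D}_{X_H^i}$ along the base directions. Over the Stein linear slice $W_x$, I would use the adapted coordinates $(x^1,\dots,x^{n-l},y^1,\dots,y^l)$ to split the torsor $\coor{\mc{N}}|_{W_{x,H}^i}$ into its base part (the $\Aut_{n-l}$ data) and its normal-frame part (the $Z(\widehat{\OO}_{n-l})$ data), and read off from the explicit homomorphism $\Phi$ that the $W_{n-l}$-summand moves along the base while $\mf{z}\otimes\widehat{\OO}_{n-l}$ acts on the fiber factor through the linear Euler-type vector field $-\sum_{i,j}A_{ij}y_ju_i$. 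In this way the remaining tensor factor $\widehat{\mc{D}}_l$ contributes formal differential operators in the transverse $y$-directions.

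The crux is then to match these two contributions with the completed algebra on the right. Because $\widehat{\mc{D}}_l$ carries \emph{formal} (as opposed to convergent) power-series coefficients in the fiber coordinates, a flat section depends holomorphically on the base coordinates $x$ but only formally on the transverse coordinates $y$; assembling the base and fiber data I would show that $\mc{Y}_H^i$ of a flat section is exactly a differential operator on $W_x$ whose coefficients are power series along $W_{x,H}^i$, i.e.\ a section of $\OO_{\widehat{W}_x}(W_{x,H}^i)\otimes_{\OO(W_x)}\mc{D}_X(W_x)$. That $\mc{Y}_H^i$ is an algebra map, and not merely $\bb C$-linear, follows from the fact that flat sections are multiplied fiberwise in $\HC$ and that the connection acts by derivations (the adjoint action $\aad\circ\Phi$), so the product of two flat sections is again flat; the semidirect factor $H$ is carried through because $H\subset\GL_l(\bb C)$ acts on $\widehat{\mc{D}}_l$ on the left and is identified, via the frame isomorphism $f$, with its linearized action on the slice.

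Finally, injectivity of $\mc{Y}_H^i$ is immediate, since a flat section whose associated operator vanishes has vanishing Taylor expansion at every point and is therefore zero. Surjectivity is where the Stein hypothesis enters: given a completed operator on $W_x$, its jets define a section of the associated bundle, and the obstruction to promoting it to a \emph{flat} section is controlled by the higher cohomology of coherent sheaves on $W_{x,H}^i$, which vanishes because $W_x$, and hence $W_{x,H}^i$, is Stein. Thus the integrable connection $\nabla_H^i$ admits the required flat sections without obstruction, and $\mc{Y}_H^i$ is bijective. I expect the main difficulty to be the completion-matching step, namely proving that flat sections detect precisely the $\mf{m}_{W_{x,H}^i}$-adic completion in the normal directions and nothing coarser or finer: one must show that the formal-geometry data along the fibers of $\coor{\mc{N}}$ reconstruct honest holomorphic dependence along $X_H^i$ while retaining only formal dependence transverse to it, which is exactly the content of identifying $\widehat{\mc{D}}_l$-valued horizontal sections with $\OO_{\widehat{W}_x}$-coefficient operators.
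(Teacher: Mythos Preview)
The paper does not actually prove this theorem: it is stated as a quotation of \cite[Theorem 5.5]{Vit19} and no argument is given in the present text. So there is no ``paper's own proof'' to compare against here.

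That said, your outline is the expected Gelfand--Kazhdan descent argument and matches the set-up the paper imports from \cite{Vit19}: trivialize the Harish--Chandra torsor $\coor{\mc{N}}$ over the Stein slice using the adapted coordinates $(x^1,\dots,x^{n-l},y^1,\dots,y^l)$, read off from the explicit $\Phi$ in \eqref{liealghom} that horizontality in the $W_{n-l}$-direction reconstructs holomorphic dependence along $W_{x,H}^i$ while the $\mf{z}\otimes\widehat{\OO}_{n-l}$-part only moves the normal frame, and conclude that a flat section is exactly a differential operator on $W_x$ with formal power-series coefficients in the transverse $y$-variables, i.e.\ a section of $\OO_{\widehat{W}_x}(W_{x,H}^i)\otimes_{\OO(W_x)}\mc{D}_X(W_x)\rtimes H$. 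Your identification of the delicate point---that the fiber factor $\widehat{\mc{D}}_l$ contributes only formal (not convergent) transverse dependence, so the image is the $\mf{m}_{W_{x,H}^i}$-adic completion and nothing else---is correct, and the use of the Stein hypothesis to kill cohomological obstructions to constructing flat sections is the standard way to obtain surjectivity. This is almost certainly the shape of the proof in \cite{Vit19}; in any case it is the canonical argument for statements of this type.
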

Next, we give a method by means of which the sheaves $j_{H*}^i\coor{\pi}_*\OO_{\textrm{flat}}(\coor{\mc{N}}\times\mc{A}_{n-l, l}^H)$ on the strata of various codimensions can be  glued into a single sheaf on $X$ in the $G$-equivariant topology. The stratification defines a finite increasing filtration of $X$ into $G$-invariant open subsets $F^0(X)=\mathring{X}\subset F^1(X)\subset\cdots\subset F^{l_{\max}}(X)=X$ where $\mathring{X}$ is the principal dense open stratum corresponding to the trivial subgroup of $G$, $F^{k}(X)$ is the open disjoint union of strata of codimension less or equal to $k$. For every $k$, we successively define a sheaf $\mc{S}^k$ on $F^k(X)$ by gluing $\mc{S}^{k-1}$ with all $j_{H*}^i\coor{\pi}_*\OO_{\textrm{flat}}(\coor{\mc{N}}\times\mc{A}_{n-l, l}^H)$ on the strata of codimension $k$ in $X$. The gluing is implemented locally on Stein $H$-invariant linear slices. 

We begin by setting $\mc{S}^0:=\mc{D}_{\mathring{X}}\rtimes G$. Next, for every  basic open set $\ind_H^GW_x$ with $x\in X_H^i$ and $\codim(X_H^i)=1$, we define $\mc{S}^1(\ind_H^GW_x)$ as the set of pairs of sections 
\begin{align*}
&\sum_{(g, g')\in G/H\times G/H}g\otimes p_{gg'}\otimes g'\in\bb CG\otimes_{\bb CH}\coor{\pi}_*\OO_{\textrm{flat}}(\coor{\mathring{X}}\times\widehat{D}_n)(W_x\setminus X_H^i)\rtimes H\otimes_{\bb CH}\bb CG\\
&\sum_{(g, g')\in G/H\times G/H}g\otimes s_{gg'}\otimes g'\in\bb CG\otimes_{\bb CH}\coor{\pi}_*\OO_{\textrm{flat}}(\coor{\mc{N}}\times\mc{A}_{n-l, l}^H)(W_{x, H}^i)\otimes_{\bb CH}\bb CG
\end{align*} 
saturating the conditions
\begin{align*}
&1.\quad \mc{Y}(p_{gg'})\in\OO(W_x)[\mc{R}(W_x)^{-1}]\otimes_{\OO(W_x)}\mc{D}_X(W_x)\rtimes H,\quad \textrm{for all}~ (g, g')\in G/H\times G/H,\\
&2.\quad s_{gg'}([\phi]_{\psi(0, 0)})=i_{\psi}( \mc{Y}(p_{gg'}))\quad \textrm{for all}~ (g, g')\in G/H\times G/H, \infty-\textrm{jets}~~[\phi]\in\coor{\mc{N}}. 
 \end{align*}
Here, $\mc{R}(W_x)$ is the multiplicative subset of $\OO(W_x)$ comprised of the constant function $1$ and all functions $f: W_x\to\bb C$ with $f|_{W_x\cap D}=0$ and $f(p)=0$ for all $p\in W_x\setminus D$. Moreover, $\psi$ is a parametrization of $W_x$ with $\psi(0, 0)=\pi(\phi(0, 0))\in W_x$, and $i_{\psi}$ is the Taylor expansion operator with respect to ${\bf x}=0$. By \cite[Proposition 6.1]{Vit19}, the collection of algebras $\mc{S}^{1}(\ind_H^GW_x)$ induces a sheaf $\mc{S}^1$ in the $G$-equivariant topology of $F^1(X)$. By \cite[Proposition 6.2]{Vit19}, there exists  an isomorphism of sheaves of $\bb C$-algebras
\begin{align*}
\mf{X}^1: \mc{S}^1\to\mc{D}_{F^1X}\rtimes G.
\end{align*}
By induction, for every $k$ with $2\leq k\leq l_{\max}$ and every basic open set $\ind_K^GW_x$ with $x\in X_K^j$ and $\codim(X_K^j)=k$, we define $\mc{S}^k(\ind_K^GW_x)$ analogously as the set of pairs of sections
\begin{align*}
q&\in\mc{S}^{k-1}(\ind_K^G(W_x\setminus X_K^j)),\\
\sum_{(g, g')\in G/K\times G/K}g\otimes s_{gg'}\otimes g'&\in\bb CG\otimes_{\bb CK}\coor{\pi}_*\OO_{\textrm{flat}}(\coor{\mc{N}}\times\mc{A}_{n-l, l}^K)(W_{x, K}^i)\otimes_{\bb CK}\bb CG
\end{align*} 
satisfying the gluing condition
\begin{align}
\label{glcontwo}
\sum_{(g, g')\in G/K\times G/K}g\otimes s_{gg'}([\phi]_{\psi(0, 0)})\otimes g'=\sum_{(g, g')\in G/K\times G/K}g\otimes i_{\psi}(d_{gg'})\otimes g'
\end{align}
where $\mf{X}^{k-1}(q)=\sum_{(g, g')\in G/K\times G/K}g\otimes d_{gg'}\otimes g'$ with $d_{gg'}\in\mc{D}_X(W_x\setminus X_K^j)\rtimes K$ according to \cite[Corollary B.8]{Vit19}. This definition induces a presheaf $\mc{S}^k$ in the $G$-equivariant topology on $F^k(X)$. The next result states that $\mc{D}_X\rtimes G$ is realized by a stepwise gluing of the presheaves $\mc{S}^k$ with $j_{H*}^i\coor{\pi}_*\OO_{\textrm{flat}}(\coor{\mc{N}}\times\mc{A}_{n-l, l}^H)$ on the strata in $X$ with codimension $\codim(X_H^i)=k+1$ for all $k=1, \dots, l_{\max}-1$ (\emph{cf}. \cite[Theorem 6.6]{Vit19}). 
\begin{theorem}
\label{maingluingresult}
For every integer $k$ with $1\leq k \leq l_{\max}$, the assignment $\ind_H^GW_x\mapsto\mc{S}^{k}(\ind_H^GW_x)$ defines a sheaf of $\bb C$-algebras $\mc{S}^k$ such that $\mc{S}^k\cong\mc{D} _{F^k(X)}\rtimes G$ as sheaves of $\bb C$-algebra. 
\end{theorem}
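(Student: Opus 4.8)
The plan is to argue by induction on the codimension index $k$, following the stepwise construction that defines the presheaves $\mc{S}^k$. The base case $k=1$ is already in place: by \cite[Proposition 6.1]{Vit19} the assignment $\ind_H^GW_x\mapsto\mc{S}^1(\ind_H^GW_x)$ defines a sheaf on $F^1(X)$, and by \cite[Proposition 6.2]{Vit19} the morphism $\mf{X}^1$ is an isomorphism $\mc{S}^1\cong\mc{D}_{F^1X}\rtimes G$. For the inductive step I would assume that $\mc{S}^{k-1}$ is a sheaf of $\bb C$-algebras on $F^{k-1}(X)$ equipped with an isomorphism $\mf{X}^{k-1}\colon\mc{S}^{k-1}\to\mc{D}_{F^{k-1}(X)}\rtimes G$, and establish the same two assertions for $\mc{S}^k$ on $F^k(X)$. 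Here $F^{k-1}(X)$ is the open union of all strata of codimension at most $k-1$, while the strata of codimension exactly $k$ form its closed complement in $F^k(X)$, so the task is genuinely one of extending an already constructed sheaf across a closed stratum.

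First I would check that the presheaf $\mc{S}^k$ is a sheaf. Since the sets $\ind_K^GW_x$ with $W_x$ a Stein $K$-invariant linear slice form the basis $\mf{B}_X^G$ of the $G$-equivariant topology, it suffices to verify the sheaf axioms on this basis. By construction a section of $\mc{S}^k$ over a basic open is an equalizer datum: a pair consisting of $q\in\mc{S}^{k-1}(\ind_K^G(W_x\setminus X_K^j))$ and a flat-section part in $\bb CG\otimes_{\bb CK}\coor{\pi}_*\OO_{\textrm{flat}}(\coor{\mc{N}}\times\mc{A}_{n-l,l}^K)(W_{x,K}^j)\otimes_{\bb CK}\bb CG$, subject to the matching relation \eqref{glcontwo}. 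By the inductive hypothesis $\mc{S}^{k-1}$ is a sheaf, and $\coor{\pi}_*\OO_{\textrm{flat}}(\coor{\mc{N}}\times\mc{A}_{n-l,l}^K)$ is a sheaf of flat sections, so both entries satisfy gluing and separation; the relation \eqref{glcontwo} is preserved under restriction because the Taylor expansion operator $i_\psi$ and the evaluation of a flat section at an $\infty$-jet are local operations. Hence the fibered product cut out by \eqref{glcontwo} inherits the sheaf property, which is exactly the claim that $\mc{S}^k$ is a sheaf of $\bb C$-algebras.

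Next I would assemble the isomorphism $\mf{X}^k$. On the open piece $F^{k-1}(X)$ I would take $\mf{X}^{k-1}$, which sends the $q$-component to a section $\sum g\otimes d_{gg'}\otimes g'$ of $\mc{D}_X\rtimes G$ on the punctured neighborhood. Over a basic neighborhood $\ind_K^GW_x$ of a new codimension-$k$ stratum, Theorem \ref{formalgeomident} supplies the local algebra isomorphism $\mc{Y}_K^j$ identifying $\coor{\pi}_*\OO_{\textrm{flat}}(\coor{\mc{N}}\times\mc{A}_{n-l,l}^K)(W_{x,K}^j)$ with $\OO_{\widehat{W}_x}(W_{x,K}^j)\otimes_{\OO(W_x)}\mc{D}_X(W_x)\rtimes K$; inducing up along $\bb CG\otimes_{\bb CK}(-)\otimes_{\bb CK}\bb CG$ and interpreting the right-hand factor as the completion of $\mc{D}_X(W_x)\rtimes K$ transverse to $X_K^j$, I would map the flat-section part into $\mc{D}_{F^k(X)}\rtimes G$. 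The condition \eqref{glcontwo} says precisely that the two images — one produced by $\mf{X}^{k-1}$ on the punctured slice, the other by $\mc{Y}_K^j$ on the stratum — coincide after Taylor expansion, so the local assignments patch to a morphism of sheaves $\mf{X}^k\colon\mc{S}^k\to\mc{D}_{F^k(X)}\rtimes G$; multiplicativity follows because both products are defined to be compatible with the local models. Bijectivity is then verified on stalks, being $\mf{X}^{k-1}$ over points of $F^{k-1}(X)$ and reducing over points of the new strata to the bijectivity of $\mc{Y}_K^j$ together with faithful flatness of the transverse completion.

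The hard part will be the compatibility of the gluing exactly along the frontier where a codimension-$k$ stratum abuts the lower strata already encoded in $\mc{S}^{k-1}$. One must show that the datum $\mf{X}^{k-1}(q)=\sum g\otimes d_{gg'}\otimes g'$, regarded as a differential operator on $W_x\setminus X_K^j$, possesses a transverse Taylor expansion $i_\psi(d_{gg'})$ that genuinely agrees with the flat section $s_{gg'}$ produced by the formal-geometric model $\coor{\pi}_*\OO_{\textrm{flat}}(\coor{\mc{N}}\times\mc{A}_{n-l,l}^K)$ on the deeper stratum. This is where the analytic content of \cite{Vit19} is indispensable: the interplay of the transitive Harish-Chandra torsor structure on $\coor{\mc{N}}$, the flat connection $\nabla_K^j$, and the completion $\widehat{W}_x$ must be used to guarantee that no poles or inconsistencies arise as operators approach $X_K^j$, so that the matching \eqref{glcontwo} can always be solved consistently. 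Once this matching is secured stratum by stratum, the induction closes and yields $\mc{S}^k\cong\mc{D}_{F^k(X)}\rtimes G$ for all $1\le k\le l_{\max}$, recovering \cite[Theorem 6.6]{Vit19}.
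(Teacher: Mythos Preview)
The paper does not actually prove this theorem. Section~\ref{constrofdxg} is explicitly a \emph{review} of the construction from the unpublished manuscript \cite{Vit19}, and Theorem~\ref{maingluingresult} is stated with the attribution ``(\emph{cf}.\ \cite[Theorem~6.6]{Vit19})'' and no accompanying proof. So there is nothing in the present paper to compare your proposal against.

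That said, your outline tracks the structure the paper's review suggests: induction on $k$, with the base case supplied by \cite[Propositions~6.1 and~6.2]{Vit19} and the inductive step combining the sheaf axioms on the basis $\mf{B}_X^G$ with the local identification of Theorem~\ref{formalgeomident}. You are right to flag the frontier compatibility as the substantive point, and honest in noting that its resolution lies in the analytic machinery of \cite{Vit19} rather than in anything the present paper supplies. One small caution: your description of $\mc{Y}_K^j$ as landing in a completion and then appealing to ``faithful flatness of the transverse completion'' to deduce bijectivity on stalks is more of a slogan than an argument; the actual mechanism in \cite{Vit19} is that a section of $\mc{S}^k$ over $\ind_K^GW_x$ is, via the gluing condition \eqref{glcontwo}, determined by its $q$-component (a genuine differential operator on the punctured slice) together with the requirement that its Taylor expansion extend across $X_K^j$, and Hartogs-type extension is what forces this to recover exactly $\mc{D}_X(W_x)\rtimes K$. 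But since the paper itself defers the proof entirely to \cite{Vit19}, your sketch is as much as can reasonably be said here.
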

The immediate and most important consequence of Theorem \ref{maingluingresult} is that for every stratum $X_H^i$, there is a map of sheaves
\begin{align}
\label{collapsingmap}
p: \mc{D}_X\rtimes G\to j_{H*}^i\coor{\pi}_*\OO_{\textrm{flat}}(\coor{\mc{N}}\times\mc{A}_{n-l, l}^H)
\end{align}
which for brevity we call \emph{collapsing map}. 
Concretely, for every $\ind_H^GW_x$ from $\mc{B}_{X}^G$ with $x\in X_H^i$, the collapsing map is given by
\begin{align}
\label{projonslices}
p: \Gamma (\ind_H^GW_x, \mc{D}_X\rtimes G)&\longrightarrow\coor{\pi}_*\OO_{\textrm{flat}}(\coor{\mc{N}}\times\mc{A}_{n-l, l}^H)(W_{x, H}^i)\\\nonumber  
(q, \sum_{(g, g')\in G/H\times G/H} g\otimes s_{gg'}\otimes g')&\mapsto\id_G\otimes s_{HH}\otimes \id_G
\end{align}
where $t_{HH}$ is the section representing the left coset of the identity in $G$. This is a well-defined map. For a set $\ind_K^GW_x$ in $\mc{B}_{X}^G$ with $x$ lying on a stratum $X_K^j$,  which is contained in the closure of  $X_H^i$, the assignment  
\begin{equation}
\label{projonarbitraryopens}
\Gamma(\ind_K^GW_x, \mc{D}_X\rtimes G)\rightarrow\fl(W_{x, H}^i)
\end{equation}
is defined in a more subtle fashion. By Cartan's Lemma the slice $W_x$ intersects only isotropy types $X_L$  associated to subgroups $L$ of $K$. Hence, by \cite[Corollary B.9]{Vit19}, the map \eqref{projonarbitraryopens} is the composition of the ensuing maps
\begin{align} 
\label{compofres}
&\mc{D}(\ind_K^GW_x)\rtimes G\xrightarrow{\cong} \bb CG\otimes_{\bb CK}\mc{D}_X(W_x)\rtimes K\otimes_{\bb CK}\bb CG\rightarrow \mc{D}_{X}(W_x\setminus X_K^j) \rtimes K\nonumber\\
&\xrightarrow{\cong}\invlim_{y\in X_L\cap W_x}\mc{D}_{X}(\ind_L^KW_y)\rtimes K\nonumber\\
&\twoheadrightarrow\prod_{L<H}\prod_{y\in W_{x, L}^i}\Big\{(s_i)_i\in\prod\mc{D}_{X}(\ind_L^KW_y)\rtimes K:~\res_{\ind_{L_1}^KW_{y'}}^{\ind_{L_{2}}^KW_y}(s_i)=s_j, L_1<L_2<L\Big\}\nonumber\\
&\twoheadrightarrow\prod_{y\in W_{x, H}^i}\Big\{(s_i)_i\in\mc{D}_{X}(\ind_H^KW_y)\rtimes K:~\res_{\ind_{H}^KW_{y'}}^{\ind_{H}^KW_y}(s_i)=s_j\Big\}\nonumber\\
&\xrightarrow{\cong}\prod_{y\in W_{x, H}^i}\Big\{(s_i)\in\prod\bb CK\otimes_{\bb CH}\mc{D}_{X}(W_y)\rtimes H\otimes_{\bb CH}\bb CK:~\res_{W_{y'}}^{W_y}(s_i)=s_j\Big\} \nonumber\\
&\twoheadrightarrow\prod_{y\in W_{x, H}^i}\Big\{(s_i)\in\prod\mc{D}_{X}(W_y)\rtimes H:~\res_{W_{y'}}^{W_y}(s_i)=s_j\Big\}\nonumber\\
&\rightarrow\fl(W_{x, H}^i)
 \end{align}
where $W_y$ is a $H$-invariant linear slice contained in $W_x$. The fourth arrow is onto since $L$-invariant linear slices $W_y$, when $L$ is not contained in $H$,  do not contain $H$-invariant slices. The last arrow in \eqref{compofres} is described the following way. By gluing condition \eqref{glcontwo}, each element in $\mc{D}_X(W_y)\rtimes H$ is uniquely represented by a pair $(q|_{W_y\setminus X_H^i}, s|_{W_{y, H}^i})$ of $\mc{S}^{l}(W_y\setminus X_H^i)\times\fl(W_{y, H}^i)$. For all $y', y''\in W_{x, H}^i$ and Stein $H$-invariant liner slices $W_{y'}, W_{y''}\subset W_x$, the corresponding sections $s'|_{W_{y', H}^i}$ and $s''|_{W_{y'', H}^i}$ coincide on every open sets $W_{y'', H}^i$, contained in $W_{y', H}^i\cap W_{y'', H}^i$. By the the gluing axiom of sheaves, there exists a section $s|_{W_{x, H}^i}$ of $j_{H*}^i\fl$ over $W_{x, H}^i$ which restricts to $s'|_{W_{y', H}^i}$ and $s''|_{W_{y'', H}^i}$, respectively and by the uniqueness axiom of sheaves it is unique. Hence, the assignment \eqref{projonarbitraryopens} is well-defined. 
Thus, the collapsing morphism \eqref{collapsingmap} is a well-defined map of sheaves on the basis $\mc{B}_{X}^G$. After taking projective limit, it becomes a well-defined map of sheaves in the $G$-equivariant topology of $X$, or equivalently on $X/G$. 

\subsubsection{Generalized trace density morphism for $\mc{D}_X$}
\label{sectdoodo}
Let $\omega$ denote the $\Aut_n$-invariant $W_n$-valued holomorphic connection $1$-form on the Harish-Chandra $(W_n, \Aut_n)$-torsor $\coor{X}$, discussed in Section \ref{formalgeom}. It induces a flat holomorphic $\GL_n(\bb C)$-equivariant connection $d+[\omega, \cdot]$ on $\coor{X}\times \widehat{\mc{D}}_n\to\coor{X}$. By Theorem \ref{formalgeomident}, there is an isomorphism of sheaves of $\bb C$-algebras between $\mc{D}_X$ and $\coor{\pi}_*\OO_{\textrm{flat}}(\coor{X}\times\widehat{\mc{D}}_n)$. For the definition of a generalized trace density map for $\mc{D}_X$, it is  convenient to express $\mc{D}_X$ locally in terms of flat sections of an appropriate bundle over the cotangent bundle $T^*X$ rather than over $\coor{X}$. We do that in the following way. 
Let $\mc{U}=\{U_{\alpha}\}$ be a cover of open Stein subsets in $X$ trivializing the cotangent bundle $\pi: T^*X\rightarrow X$, hence the frame bundle $\Fr(X)\rightarrow X$. The principal bundle $\coor{X}/\GL_n(\bb C)\to X$ can equivalently be seen as a $\GL_n(\bb C)$-equivariant principal bundle $\coor{X}\to\Fr(X)$. On a small enough $U_{\alpha}\in\mc{U}$ there is always a local $\GL_n(\bb C)$-equivariant  holomorphic section $s_{\alpha}: \Fr(X)|_{U_{\alpha}}\rightarrow\coor X|_{U_{\alpha}}$.  The pullback of $\omega$ along $s_{\alpha}$ yields a $\GL_n(\bb C)$-equivariant holomorphic $1$-form $\omega_{\alpha}:=s_{\alpha}^*\omega$ on $\Fr(X)|_{U_{\alpha}}$ with values in $W_n$ which satisfies the Maurer-Cartan equation. It descends to a holomorphic $W_n$-valued Maurer-Cartan $1$-form on $U_{\alpha}$, which for brevity we also denote by $\omega_{\alpha}$, and induces a holomorphic flat connection $\nabla_{\alpha}:=d+[\omega_{\alpha}, \cdot]$ on the associated trivial vector bundle 
$\mf{D}_{\alpha}:=\Fr(X)|_{U_{\alpha}}\times_{\GL_n(\bb C)}\widehat{\mc{D}}_n\rightarrow U_{\alpha}$. Finally, the pullback holomorphic bundle $\pi^*\mf{D}_{\alpha}\rightarrow T^*X|_{U_{\alpha}}$ is flat with respect to $\pi^*\nabla_{\alpha}$. We remark that the pullback of $\omega_{\alpha}\in\Omega_X^{(1, 0)}(U_{\alpha}, W_n)$ along the holomorphic projection $\pi$ induces a $W_n$-valued holomorphic $1$-form on $T^*X|_{U_{\alpha}}$ satisfying the Maurer-Cartan equation which again by abuse of notation we denote by $\omega_{\alpha}$. The direct image under $\pi$ of the sheaf of pullback horizontal sections of $\pi^*\mf{D}_{\alpha}$ with respect to $\pi^*\nabla_{\alpha}$ is isomorphic to $\coor{\pi}_*\OO_{\textrm{flat}}(\coor X\times\widehat{\mc{D}}_n)|_{U_{\alpha}}$ and hence its sections over $U_{\alpha}$ are in one-to-one correspondence with holomorphic differential operators in $\mc{D}_X(U_{\alpha})$.

Adhering to the notation in \cite{EF08} we shall  denote $k$-chains of the normalized Hochschild chain complex of an associative algebra $A$ by $(a_0, a_1, \cdots, a_k)$. Recall the nontrivial Feigin-Felder-Shoikhet normalized Hochschild $2n$-cocycle $\tau_{2n}$ of $\mc{D}_n$ with values in the dual bimodule $\mc{D}_n^*$ which naturally extends to a linear form on $\widehat{\C}_{2n}(\mc{D}_n):=\mc{D}_{n}^{\hat{\otimes}2n+1}$. It is $\GL_n(\bb C)$-basic meaning that it is invariant under the adjoint action of $\GL_n(\bb C)$ on $\mc{D}_n$ and for any $a\in\mf{gl}_n(\bb C)$,
\[\sum_{j=1}^{2n}(-1)^j\tau_{2n}(\widehat{D}_0, \cdots, \widehat{D}_{j-1}, a, \widehat{D}_j, \cdots, \widehat{D}_{2n-1})=0,\]
where $\widehat{D}_j\in\mc{D}_n$ for all $j=0, \dots, 2n-1$,
holds true.

For every intersection $U_{\alpha\beta}=U_{\alpha}\cap U_{\beta}$, let $g_{\alpha\beta}: U_{\alpha\beta}\rightarrow\GL_n(\bb C)$ be the corresponding transition map between the fiber coordinates of  $T^*X$ . The relation between two local $1$-forms $\omega_{\alpha}$ and $\omega_{\beta}$ defined in coordinates of $T^*X|_{U_{\alpha}}$ and $T^*X|_{U_{\beta}}$ is given by 
\[\omega_{\beta}=\ad(g_{\alpha\beta}^{-1})\omega_{\alpha}+g_{\alpha\beta}^{-1}dg_{\alpha\beta}.\] 
Furthermore, every flat section $\widehat{D}_{\alpha}$ of $\mf{D}_{\alpha}$ over $U_{\alpha}$ representing a unique differential operator $D_{\alpha}$ in $\mc{D}_{X}(U_{\alpha})$ transforms under a change of trivialization according to 
\[\widehat{D}_{\beta}=\ad(g_{\alpha\beta}^{-1})\widehat{D}_{\alpha}.\] 
Now, consider on every open set $U_{\alpha}$ the following  composition of morphisms
\begin{align}
\label{composofmapsdeftrdm}
&\C_{p}(\mc{D}_X(U_{\alpha}))\rightarrow\C_{p}(\Gamma_{\textrm{flat}}(\pi^{-1}(U_{\alpha}), \pi^*\mf{D}_{\alpha}))\xhookrightarrow{f}\C_{p}\big((\Omega_{T^*X}^{\bullet}(\pi^{-1}(U_{\alpha}), \pi^*\mf{D}_{\alpha}), \tilde{\nabla}_{\alpha})\big)\nonumber\\
&\xrightarrow{g}\C_{p}\big((\Omega_{T^*X}^{\bullet}(\pi^{-1}(U_{\alpha}), \widehat{\mc{D}}_{n}), d_{\dR}+[\omega_{\alpha}, \cdot])\big)\rightarrow\C_{p}\big((\Omega_{T^*X}^{\bullet}(\pi^{-1}(U_{\alpha}), \widehat{\mc{D}}_{n}), d_{\dR})\big)\nonumber\\
&\rightarrow(\Omega_{T^*X}^{2n-p}(\pi^{-1}(U_{\alpha})), (-1)^{2n-p}d_{\dR})\rightarrow(\Omega_{T^*X}^{2n-p}(\pi^{-1}(U_{\alpha})), d_{\dR}),
\end{align}
where $\tilde{\nabla}_{\alpha}$ is the covariant derivative induced by $\nabla_{\alpha}$, with corresponding connecting morphisms given by 
\begin{align*}
&(D_0, \dots, D_p)\xmapsto{\widehat{\cdot}}(\widehat{D}_0, \dots, \widehat{D}_p)\xmapsto{g\circ f}(\widehat{D}_0, \dots, \widehat{D}_p)\mapsto\sum_{k\geq0}(-1)^k(\widehat{D}_0, \dots, \widehat{D}_p)\times(\omega_{\alpha})^k\mapsto\\
&\tau_{2n}\big(\sum_{k\geq0}(-1)^k(\widehat{D}_0, \dots, \widehat{D}_p)\times(\omega_{\alpha})^k\big)\xmapsto{(-1)^{\floor{\frac{2n-p}{2}}}}\sum_{k\geq0}(-1)^{\floor{\frac{3k}{2}}}\tau_{2n}\big((\widehat{D}_0, \dots, \widehat{D}_p)\times(\omega_{\alpha})^k\big).\\
\end{align*}
Each of the above morphism are discussed in a greater detail in Section 3 in \cite{Ram11}
This composition gives a map between complexes of sheaves over $U_{\alpha}$ 
\begin{align*}
\chi_{\alpha}: \mc{C}_{\bullet}(\mc{D}_X, \mc{D}_X)|_{U_{\alpha}}&\longrightarrow\pi_*\Omega_{T^*X|_{U_{\alpha}}}^{2n-\bullet}\\
(D_0, \cdots, D_n)&\mapsto\sum_{k\geq0}(-1)^{\floor{\frac{3k}{2}}}\tau_{2n}\big((\widehat{D}_0, \cdots, \widehat{D}_n)\times(\omega_{\alpha})^k\big)
\end{align*}
where $(\omega_{\alpha})^k$ denotes the normalized Hochschild $k$-chain $(1, \omega_{\alpha}, \cdots, \omega_{\alpha})$ ($k$ copies of $\omega_{\alpha}$). 
We now want to show that the  map $\chi_{\alpha}$ extends to a map $\chi$ over all of $X$. To that aim, consider the following easy to prove binomial formula for the shuffle product  
\begin{align*}
&(a+b)^k=\sum_{j=0}^k\binom{k}{j}(a)^{k-j}\times(b)^j
\end{align*}
which we invoke in the ensuing computation. After a change of trivialization, the map $\chi_{\alpha}$ changes as
\begin{align}
\label{gluingtracedensities}
&\chi_{\beta}(D_{0}, \cdots, D_{p})=\sum_{k\geq0}(-1)^{\floor*{\frac{3k}{2}}}\tau_{2n}\Big((\widehat{D}_{0\beta}, \cdots, \widehat{D}_{p\beta})\times(\omega_{\beta})^k\Big)\nonumber\\
&=\sum_{k\geq0}(-1)^{\floor*{\frac{3k}{2}}}\tau_{2n}\Big((\ad(g_{\alpha\beta}^{-1})\widehat{D}_{0\alpha}, \cdots, \ad(g_{\alpha\beta}^{-1})\widehat{D}_{p\alpha})\times(\ad(g_{\alpha\beta}^{-1})\omega_{\alpha}-g_{\alpha\beta}^{-1}dg_{\alpha\beta})^k\Big)\nonumber\\
&=(-1)^{\floor*{3n-\frac{3p}{2}}}\tau_{2n}\Big((\ad(g_{\alpha\beta}^{-1})\widehat{D}_{0\alpha}, \cdots, \ad(g_{\alpha\beta}^{-1})\widehat{D}_{p\alpha})\times(\ad(g_{\alpha\beta}^{-1})\omega_{\alpha}-g_{\alpha\beta}^{-1}dg_{\alpha\beta})^{2n-p}\Big)\nonumber\\
&=(-1)^{\floor*{3n-\frac{3p}{2}}}\tau_{2n}\Big((\ad(g_{\alpha\beta}^{-1})\widehat{D}_{0\alpha}, \cdots, \ad(g_{\alpha\beta}^{-1})\widehat{D}_{p\alpha})\times\nonumber\\
&\quad\qquad\qquad\qquad\qquad\qquad\sum_{j=0}^{2n-p}\binom{2n-p}{j}(-1)^j(\ad(g_{\alpha\beta}^{-1})\omega_{\alpha})^{2n-p-j}\times(g_{\alpha\beta}^{-1}dg_{\alpha\beta})^{j}\Big)\nonumber\\
&=(-1)^{\floor*{3n-\frac{3p}{2}}}\tau_{2n}\Big((\ad(g_{\alpha\beta}^{-1})\widehat{D}_{0\alpha}, \cdots, \ad(g_{\alpha\beta}^{-1})\widehat{D}_{p\alpha})\times(\ad(g_{\alpha\beta}^{-1})\omega_{\alpha})^{2n-p}\Big)+\nonumber\\
&+\sum_{j=1}^{2n-p}(-1)^{j+\floor*{3n-\frac{3p}{2}}}\binom{2n-p}{j}\tau_{2n}\Big((\ad(g_{\alpha\beta}^{-1})\widehat{D}_{0\alpha}, \cdots, \ad(g_{\alpha\beta}^{-1})\widehat{D}_{p\alpha})\times\nonumber\\
&\quad\qquad\qquad\qquad\qquad\qquad\qquad(\ad(g_{\alpha\beta}^{-1})\omega_{\alpha})^{2n-p-j}\times(g_{\alpha\beta}^{-1}dg_{\alpha\beta})^{j-1}\times(g_{\alpha\beta}^{-1}dg_{\alpha\beta})\Big)\nonumber\\
&=(-1)^{\floor*{3n-\frac{3p}{2}}}\tau_{2n}\Big((\widehat{D}_{0\alpha}, \cdots, \widehat{D}_{p\alpha})\times(\omega_{\alpha})^{2n-p}\Big)\nonumber\\
&=\chi_{\alpha}(D_{0}, \cdots, D_{p})
\end{align} 
where in the second to the last line we made use of the fact that the Feigin-Felder-Schoikhet $2n$-cocycle $\tau_{2n}$ is $\GL_n(\bb C)$-basic. This demonstrates that the map is independent on the choice of trivialization and hence it is well-defined as a morphism of complexes of sheaves on $\mf{U}$. Consequently, the family of maps $(U_{\alpha}, \chi_{\alpha})$ extends to a morphism of cochain complexes of sheaves 
\begin{equation}
\label{holomorphictracedensity}
\chi: \mc{C}_{\bullet}(\mc{D}_X)\rightarrow\pi_*\Omega_{T^*X}^{2n-\bullet}
\end{equation}
on the whole of $X$ if we regard the Hochschild chain complex $\mc{C}_{\bullet}(\mc{D}_X)$ as a cochain complex by inverting degrees, i.e. $\mc{C}^{-\bullet}(\mc{D}_X, \mc{D}_X)=\mc{C}_{\bullet}(\mc{D}_X)$. For any choice of local holomorphic sections $\{s_{\alpha}\}$ on the different members of $\mc{U}$ the  corresponding induced cochain morphisms \eqref{holomorphictracedensity} are homotopic. Therefore, at the level of cohomology sheaves the morphism $\chi_*$ is canonic. Moreover, we can prove the ensuing result.  
\begin{proposition}
The morphism \eqref{holomorphictracedensity} is a quasi-isomorphism. 
\end{proposition}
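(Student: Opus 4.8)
The plan is to prove the claim by checking it on cohomology sheaves, which is a local condition: a morphism of complexes of sheaves is a quasi-isomorphism precisely when it induces isomorphisms on all cohomology sheaves $\mc H^\bullet$, and this may be verified stalkwise, hence on an arbitrarily fine cover. Since $\chi$ was assembled from the local maps $\chi_\alpha$ on the trivializing Stein cover $\mc U$ and was shown in \eqref{gluingtracedensities} to be independent of the chosen trivialization, and since each $\chi_\alpha$ arises by localizing, in the sense of Gelfand--Kazhdan formal geometry (Section \ref{formalgeom}), a single morphism attached to the formal fibre, I would reduce the entire statement to a computation over the formal Weyl algebra $\widehat{\mc D}_n$.

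First I would pin down the cohomology sheaves of the two sides separately and observe that each is concentrated in a single degree matching the degree shift by $2n$ built into $\chi$. For the source, the local formal model identifies the cohomology sheaf of $\mc C_\bullet(\mc D_X)$ with the associated bundle whose fibre is the continuous Hochschild homology $\hh_\bullet(\widehat{\mc D}_n)$, which by the classical computation is one-dimensional and concentrated in degree $2n$. The same conclusion follows intrinsically from the Calabi--Yau property of $\mc D_X$ established in Proposition \ref{dxcalabiyau}: van den Bergh duality gives $\mc{HH}_i(\mc D_X)\cong\mc{HH}^{2n-i}(\mc D_X)$, and since the centre of $\mc D_X$ is $\bb C_X$ while $\mc{HH}^{>0}(\mc D_X,\mc D_X)=0$, one obtains $\mc{HH}_\bullet(\mc D_X)\cong\bb C_X$ placed in chain degree $2n$. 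For the target, I would invoke the holomorphic Poincaré lemma: for $U$ a polydisc the total space $\pi^{-1}(U)\cong U\times\bb C^n$ is Stein and contractible, so the holomorphic de Rham complex $\big(\Omega^\bullet_{T^*X}(\pi^{-1}(U)),d_{\dR}\big)$ has cohomology $\bb C$ in form-degree $0$ and vanishes otherwise; passing to stalks shows $\mc H^\bullet(\pi_*\Omega_{T^*X}^{\bullet})\cong\bb C_X$, again concentrated in the single degree that $\chi$ maps onto.

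With both sides reduced to a copy of $\bb C_X$ in the same degree, it remains to check that the induced map is a nonzero (hence invertible) map of these rank-one local systems, and for this I would work fibrewise. Over the formal fibre $\chi$ becomes, after stripping away the Maurer--Cartan twisting $d_{\dR}+[\omega_\alpha,\cdot]$ and the shuffle products, the pairing of a Hochschild cycle against the Feigin--Felder--Shoikhet cocycle $\tau_{2n}$. Because $\tau_{2n}$ is the canonical generator of $\hh^{2n}(\widehat{\mc D}_n,\widehat{\mc D}_n^*)\cong\bb C$ and the induced pairing with $\hh_{2n}(\widehat{\mc D}_n)\cong\bb C$ is nondegenerate, the fibre map is an isomorphism; the $\GL_n(\bb C)$-basic property of $\tau_{2n}$ used in \eqref{gluingtracedensities} guarantees that this isomorphism is horizontal for the flat connection and therefore descends to an isomorphism of the constant sheaves $\bb C_X$.

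The main obstacle is this fibrewise step: one must verify that the globally defined holomorphic trace density, built from the connection forms $\omega_\alpha$ on $T^*X$, genuinely restricts on each formal fibre to the Feigin--Felder--Shoikhet map and computes its top, one-dimensional homology. This rests on $\chi$ being a chain map, which in turn uses that $\tau_{2n}$ is a Hochschild cocycle together with the Maurer--Cartan equation satisfied by $\omega_\alpha$, and on a careful treatment of the completed tensor products defining $\mc C_\bullet$ and of the holomorphic Poincaré lemma on the non-compact fibres $T^*U$ so that $\pi_*$ introduces no spurious cohomology. Once the fibrewise nondegeneracy of $\tau_{2n}$ and the concentration of both cohomology sheaves are in place, the descent afforded by the flat $\GL_n(\bb C)$-equivariant connection assembles these data into the desired global quasi-isomorphism.
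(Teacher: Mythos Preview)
Your proposal is correct and follows essentially the same architecture as the paper: reduce to stalks, show that both cohomology sheaves are one-dimensional and concentrated in the single matching degree, then check that the induced map is nonzero there. The paper computes the target via the holomorphic Poincar\'e lemma exactly as you do, computes the source by citing Wodzicki's theorem \cite{Wod87} to get $\hh_\bullet(\mc{D}_X(U))\cong\bb C\cdot[c_X(U)]$ in degree $2n$ with the explicit generator $c_X(U)=\sum_{\pi\in S_{2n}}\sgn(\pi)(1,u_{\pi(1)},\dots,u_{\pi(2n)})$, and then verifies nontriviality by the normalization $\chi_{*,x}([c_{X,x}])=[1]$ of $\tau_{2n}$; this is the concrete form of the ``nondegenerate pairing'' you invoke abstractly, and for a top-degree chain only the $k=0$ term of your shuffle sum survives, so the stripping of the Maurer--Cartan twist is immediate.

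One remark: your alternative route via the Calabi--Yau property is circular as stated. You assert $\mc{HH}^{>0}(\mc{D}_X,\mc{D}_X)=0$ and then apply van den Bergh duality to conclude $\mc{HH}_\bullet(\mc{D}_X)\cong\bb C_X[2n]$, but the vanishing of higher Hochschild cohomology of $\mc{D}_X$ is, via that same duality, equivalent to the concentration of Hochschild homology you are trying to establish. It is of course true, but it requires an independent input (Wodzicki, or the formal-fibre computation of $\hh_\bullet(\widehat{\mc{D}}_n)$ you mention first), so it does not give a genuinely separate argument.
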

\begin{proof}
The stalk of the cohomology sheaf ${\bf H}^{\bullet}(\pi_*\Omega_{T^*X}^{2n-\bullet})$ at $x\in X$ by definition is
\begin{align*}
\label{stalkofcsholderham}
{\bf H}^{-\bullet}(\pi_*\Omega_{T^*X}^{2n-\bullet})_x&\cong\h^{-\bullet}(\pi_*\Omega_{T^*X, \pi(x, v)}^{2n-\bullet})\\
&\cong\varinjlim_{\pi^{-1}(U)\ni(x, v)}\h^{-\bullet}(\Omega_{T^*X}^{2n-\bullet}(\pi^{-1}(U)))\\
&\cong\varinjlim_{X\supseteq U\ni x}\h^{-\bullet}(\Omega_X^{2n-\bullet}(U))\\
&\cong\h^{-\bullet}(\Omega_{X, x}^{2n-\bullet})\\
&\cong\h^{-\bullet}(\mc{A}_{X, \bb C, x}^{2n-\bullet})\\
&=\begin{cases}
\bb C, \textrm{if $\bullet=2n$}\\
0, \textrm{otherwise}
\end{cases}
\end{align*} 
where in the second and the fourth line we use the commutativity of direct limits with the cohomology functor, in the third line we invoke the homotopy equivalence of the holomorphic cotangent bundle and the base space and in the second to the last line we invoke the fact that $\mc{A}_{X, \bb C}^{\bullet}$ and $\Omega_X^{\bullet}$ are both (injective) resolutions of the constant sheaf $\bb C_X$. Consider the induced map on the cohomology sheaves
\[\chi_*: \mc{HH}^{-\bullet}(\mc{D}_{X}, \mc{D}_{X})\rightarrow{\bf H}^{-\bullet}(\pi_*\Omega_{T^*X}^{2n-\bullet}).\]
This map is an isomorphism if and only if it is an isomorphism on stalks. Let $(x_1, \dots, x_n)$ be the local coordinates at a point $x\in X$. From \cite[Theorem $1$]{Wod87}, it follows that for $x\in X$, 
\begin{align}
\label{stalkofhsofdiffop}
\mc{HH}^{-\bullet}(\mc{D}_X, \mc{D}_X)_x&\cong\h^{-\bullet}(\mc{C}^{-\bullet}(\mc{D}_{X, x}, \mc{D}_{X, x}))\nonumber\\
&\cong\hh_{\bullet}(\mc{D}_{X, x})\nonumber\\
&\cong\varinjlim_{X\supseteq U\ni x}\hh_{\bullet}(\mc{D}_X(U))\nonumber\\
&\cong\begin{cases}
\varinjlim_{X\supseteq U\ni x}\bb C\cdot\{c_X(U)\},\quad \textrm{if $\bullet=2n$}\nonumber\\
0,\qquad\textrm{otherwise}
\end{cases}\nonumber\\
&\cong\begin{cases}
\bb C\cdot\{c_{X, x}\},\quad \textrm{if $\bullet=2n$}\\
0,\qquad \textrm{otherwise} 
\end{cases}
\end{align}
where $c_X(U):=\sum_{\pi\in S_{2n}}\sgn(\pi)(1, u_{\pi(1)}, \dots, u_{\pi(2n)})$ with $u_{2j-1}=\partial_{x_j}, u_{2j}=x_j$ is the generator of $\hh_{2n}(D_X(U))$. By the normalization property of $\tau_{2n}$, for every $x\in X$, we have on the generator $c_{X, x}$ of $\hh_{2n}(\mc{D}_{X, x})$ 
\[\chi_{*, x}([c_{X, x}])=[1]\] 
which implies that the induced morphism
\[\chi_{*, x}: \mc{HH}^{-\bullet}(\mc{D}_{X}, \mc{D}_{X})_x\rightarrow{\bf H}^{-\bullet}(\pi_*\Omega_{T^*X}^{2n-\bullet})_x\]
is a non-trivial map between one-dimensional stalks of cohomology sheaves.  Hence, $\chi_{*, x}$ is an isomorphism. Ergo, $\chi_*$ is an isomorphism of cohomology  sheaves which concludes the proof. 
\end{proof}
\subsubsection{Generalized trace density morphism for the skew group ring $\mc{D}_X\rtimes G$} 
\label{gentdm}
Sheaves on the quotient  $X/G$ such as $\mc{D}_X\rtimes G$ can equivalently be viewed as sheaves defined in the $G$-equivariant topology of $X$. To simplify computations, we shall work on the basis $\mf{B}_X^G$ for the $G$-equivariant topology of $X$ defined in Section \ref{formalgeom}.  Here, we are interested in generalized trace density maps for strata associated to cyclic  subgroups because, as it is shown in Theorem \ref{quasiisomthm}, they are the building blocks of a quasi-isomorphism for the Hochschild chain and cochain complexes of $\mc{D}_X\rtimes G$ which we later use to compute the space of infinitesimal deformations of $\mc{D}_X\rtimes G$. However, the construction in the following goes through for any parabolic subgroup $H$ of $G$.  

Let $\langle g\rangle$ be the cyclic group generated by a group element $g$ in $G$ and let $X_{\langle g\rangle}^i$ be a stratum of codimension $l_g^i$ with closure lying in the corresponding connected component $X_i^g$ of the $g$-fixed point submanifold. Let $\pi_i^g: T^*X_i^g\rightarrow X_i^g$ be the cotangent bundle to  $X_i^g$. We first extend map \eqref{collapsingmap}, 
\begin{align}
\label{restrictedtrdm}
p: \mc{D}_X\rtimes G&\rightarrow j_{\langle g\rangle*}^i\pi^{\textrm{coor}}_*\OO_{\textrm{flat}}(\mc{N}^{\textrm{coor}}|_{X_{\langle g\rangle}^i}\times\mc{A}_{n-l_g^i, l_g^i}^{\langle g\rangle}),
\end{align}  
to the fixed point submanifold $X_i^g$. 
\begin{lemma}
The map \eqref{restrictedtrdm} uniquely extends to a morpism of sheaves 
\begin{align*}
 \bar{p}: \mc{D}_X\rtimes G&\rightarrow j_{i*}^g\pi^{\textrm{coor}}_*\OO_{\textrm{flat}}(\mc{N}^{\textrm{coor}}\times\mc{A}_{n-l_g^i, l_g^i}^{\langle g\rangle}).
 \end{align*}
 on $X_i^g$. 
\end{lemma}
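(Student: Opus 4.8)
The plan is to obtain $\bar p$ as the unique horizontal‑section extension of $p$ across the lower strata sitting inside $X_i^g$, the only difference between the two maps being that $\coor{\pi}_*\OO_{\textrm{flat}}(\coor{\mc{N}}\times\mc{A}_{n-l_g^i, l_g^i}^{\langle g\rangle})$ is now read over the whole connected component $X_i^g$ instead of over its open dense stratum $X_{\langle g\rangle}^i$. First I would record that the target of $\bar p$ really is defined over all of $X_i^g$: since $g$ has finite order its fixed locus is a smooth closed submanifold and $l_g^i$ is constant along $X_i^g$, so the normal bundle $\N$, the torsor $\coor{\mc{N}}$, the Harish-Chandra structure \eqref{liealghom} on $\mc{A}_{n-l_g^i, l_g^i}^{\langle g\rangle}$, and the flat connection $\nabla_{\langle g\rangle}^i=d+\aad\circ\Phi\circ\omega$ all extend verbatim from the stratum to $X_i^g$, the stratum being merely the locus where the stabilizer does not jump. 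The open inclusion $X_{\langle g\rangle}^i\hookrightarrow X_i^g$ then induces a canonical restriction morphism from the target of $\bar p$ to the target of $p$, and the content of the lemma is that $p$ lifts uniquely through it.

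Uniqueness is the easy half. The complement $X_i^g\setminus X_{\langle g\rangle}^i=\bigcup_{h\in G\setminus\langle g\rangle}(X^h\cap X_i^g)$ is a finite union of proper analytic subsets --- proper because faithfulness of the action prevents any $h\notin\langle g\rangle$ from fixing the dense stratum, hence from fixing $X_i^g$ --- so it is nowhere dense. Two horizontal holomorphic sections of $\coor{\pi}_*\OO_{\textrm{flat}}(\coor{\mc{N}}\times\mc{A}_{n-l_g^i, l_g^i}^{\langle g\rangle})$ that agree on the dense open $X_{\langle g\rangle}^i$ therefore agree on the connected manifold $X_i^g$ by analytic continuation of flat sections, so at most one lift $\bar p$ can exist.

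For existence I would argue locally and glue. Let $D$ be a local section of $\mc{D}_X\rtimes G$, so that $p(D)$ is a single‑valued horizontal section of the target over $X_{\langle g\rangle}^i$. Fix $x$ on a lower stratum $X_K^j\subset X_i^g$ with $K\supsetneq\langle g\rangle$, choose a ball $B\subset X_i^g$ about $x$, and put $Z:=B\cap(X_i^g\setminus X_{\langle g\rangle}^i)$; then $B\setminus Z$ is connected. Since $B$ is simply connected and $\nabla_{\langle g\rangle}^i$ is flat, parallel transport over $B$ is path‑independent; as $p(D)$ is single‑valued and horizontal on $B\setminus Z$, it equals the transport of its value at a base point $p_0\in B\setminus Z$, and transporting that same value over all of $B$ yields a horizontal section $\bar p(D)|_B$ restricting to $p(D)$ on $B\setminus Z$. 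By the uniqueness above these local pieces agree on overlaps and glue to a global section over $X_i^g$; their compatibility with restrictions makes $\bar p$ a morphism of sheaves, evidently the only one lifting $p$.

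The crux --- and the reason the extension is not automatic --- is that $Z$ may be a genuine hypersurface in $X_i^g$, as already for $\bb Z/2\times\bb Z/2$ acting on $\bb C^2$; across a codimension‑one analytic set a naive Riemann removable‑singularity argument for the holomorphic coefficients would demand a boundedness input. Flatness of $\nabla_{\langle g\rangle}^i$ is exactly what bypasses this, forcing a single‑valued horizontal section over the connected $B\setminus Z$ to be covariantly constant and hence to extend by transport with no growth hypothesis. Equivalently, unwinding \eqref{compofres} shows that over a $K$‑invariant slice $D$ is an honest holomorphic operator in $\mc{D}_X(W_x)\rtimes K$ whose transverse Taylor coefficients are holomorphic on all of $W_x\cap X_i^g$; I expect the only delicate point to be checking that this $\langle g\rangle$‑isotypic transverse jet at points of the smaller strata agrees with the parallel‑transported section produced above.
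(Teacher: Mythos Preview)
Your argument is correct, but it follows a genuinely different route from the paper's. The paper splits into two cases according to whether $X_{\langle g\rangle}^i$ has positive codimension or equals the principal stratum $\mathring X$. In the first case it does not work intrinsically on $X_i^g$ at all: it observes that while the bad locus $Z$ may indeed be a hypersurface inside $X_i^g$ (exactly the obstacle you isolate), the corresponding deeper stratum $X_K^j$ with $K\supsetneq\langle g\rangle$ necessarily has codimension at least two in the ambient $X$. Hartog's theorem then gives $\mc{D}_X(W_x\setminus X_K^j)\cong\mc{D}_X(W_x)$ on a $K$-invariant slice, and the formal-geometric identification of Theorem \ref{formalgeomident} converts the extended differential operator directly into a flat section over all of $W_x\cap X_i^g$, with uniqueness from the identity theorem. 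The second case, where genuine codimension-one strata sit inside $X=X_i^g$, is handled separately via the explicit gluing data of Section \ref{constrofdxg}. Your approach trades this case split for a single parallel-transport argument for $\nabla_{\langle g\rangle}^i$; this is cleaner and more conceptual, but since the fibre $\mc{A}_{n-l_g^i, l_g^i}^{\langle g\rangle}$ is infinite-dimensional you should add a word on why parallel transport (solvability of the horizontal ODE) exists here. The paper's ambient-Hartog trick sidesteps precisely the codimension-one difficulty you flagged, at the price of the case distinction and of unwinding \eqref{compofres} by hand.
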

\begin{proof}
We distinguish two cases: ${\bf {1}})$ the codimension of the stratum $X_{\langle g\rangle}^i$ is  equal to or bigger than $1$, ${\bf {2}})$ the stratum $X_{\langle g\rangle}^i$ is the prinicipal (dense and open) stratum $\mathring{X}$ in $X$.

${\bf {1}})$ 
Let $W_x$ be a Stein $K$-invariant linear slice centered at a point $x$ on a stratum $X_K^j$ contained in $X_i^g$ as above. By Hartog's Theorem, the third arrow in \eqref{compofres} is bijective. Hence, the image  
of morphism \eqref{compofres}, when $H=\langle g\rangle$, coincides with the image of 
\begin{align*}
&\mc{D}_X(W_x\setminus X_K^j)\rtimes\langle g\rangle\xrightarrow{\cong}\invlim_{\substack{W_y\\ y\in X_L,~ L\leq\langle g\rangle}}\mc{D}_X(\ind_L^{\langle g\rangle}W_y) \rtimes\langle g\rangle\nonumber\\
&\twoheadrightarrow\prod_{\substack{W_y\\ \textrm{ $y\in X_{\langle g\rangle}^i$}}}\{(s)\in\mc{D}_X(W_y)\rtimes\langle g\rangle:~\res_{U_{\beta'}}^{U_{\beta}}(s)=s'\}\rightarrow\coor{\pi}_*\OO_{\textrm{flat}}(\coor{\mc{N}}|_{X_{\langle g\rangle}^i}\times\mc{A}_{n-l_g^i, l_g^i}^{\langle g\rangle})(W_{x, \langle g\rangle}^i).
\end{align*}
Here, $W_y$ is a Stein $L$-invariant slice. The preimage $\hat{s}_1$ in $\mc{D}_X(W_x\setminus X_K^j)\times\langle g\rangle$ of every section $\hat{s}$ in the image of \eqref{compofres} is non-empty. Furthermore, as the codimension of $X_K^j$ is at least $2$ in $X$,  by  Hartog's Theorem, it follows $\mc{D}_X(W_x\setminus X_K^j)\rtimes\langle g\rangle\cong\mc{D}_X(W_x)\rtimes\langle g\rangle$. By the construction in Section \ref{constrofdxg}, $\hat{s}_1$ determines a unique section $\hat{s}_2$ of $\coor{\pi}_*\OO_{\textrm{flat}}(\coor{\mc{N}}\times\mc{A}_{n-l_g^i, l_g^i}^{\langle g\rangle})(W_x\cap X_i^g)$ with $\hat{s}_2|_{W_{x, \langle g\rangle}^i}=\hat{s}$. Since all sections $\hat{s}_2$ coincide on the open subset $W_{x, \langle g\rangle}^i$ of $W_x\cap X_i^g$, by the identity theorem, they are identical on $W_x\cap X_i^g$. This determines a well-defined extension $\bar p$ of $p$ on $X_i^g$.

${\bf {2}})$ Assume $X_{\langle g\rangle}^i=\mathring{X}$. Assume $\codim(X_K^j)=1$. Here, we only consider the case of a basic open sets $\ind_K^GW_x$ with $x$, centered on $X_K^j$.  Each element $D\cdot k$ in $\mc{D}_{X}(W_x)\rtimes K$ is  uniquely represented by a pair of sections $u=(t|_{W_x\setminus X_K^j}\cdot k, s|_{W_{x, K}^j})$ of $\coor{\pi}_*\OO_{\textrm{flat}}(\coor{\mathring{X}}\times\widehat{\mc{D}}_n)(W_x\setminus X_K^j)\rtimes K$ and $\coor{\pi}_*\OO_{\textrm{flat}}(\coor{\mc{N}}\times\mc{A}_{n-1, 1}^K)(W_{x, K}^j)$ satisfying the gluing conditions from Section \ref{constrofdxg}. If we set $Y=\mathring{X}\sqcup X_K^j$, the same element $D\cdot k$ uniquelly determines a flat section $r\cdot k$ in $\coor{\pi}_*\OO_{\textrm{flat}}(\coor{Y}\times\mc{D}_n)(U_{\alpha})\rtimes K$. Clearly, the restriction maps satisfy $\res_{W_x\setminus X_K^j}^{W_x}(r\cdot k)=\res_{W_x\setminus X_K^j}^{W_x}(u)=t|_{W_x\setminus X_K^j}\cdot k$. Hence, $r$ is the unique flat section in $\coor{\pi}_*\OO_{\textrm{flat}}(\coor{Y}\times\mc{D}_n)(W_x)$ which is assigned to $D\cdot k$ and extends $t|_{W_x\setminus X_K^j}$. This gives the wanted extension $\bar p$. We further extend \eqref{restrictedtrdm} step by step to the union of $\mathring{X}$ with all strata of codimension $1$. 
\end{proof}
Let in the following $\mc{U}=\{U_{\alpha}\}$ be a family of Stein $H$-invariant slices at points $x$ on $X_i^g$ with stabilizers $H\supseteq\langle g\rangle$ which trivialize  $TX|_{X_i^g}$ such that the sets $W_{\alpha}=U_{\alpha}\cap X_i^g$ form an open cover of $X_i^g$. We pullback $\omega$ along a locally holomorphic section $s_{\alpha}$ of $\coor{\mc{N}}$ over $W_{\alpha}=U_{\alpha}\cap X_i^g$ to a $\mc{A}_{n-l_g^i, l_g^i}^{\langle g\rangle}$-valued  holomorphic $1$-form $\omega_{\alpha}:=s_{\alpha}^*\omega$ on $W_{\alpha}$ satisfying the Maurer-Cartan equation in a similar fashion to Section \ref{sectdoodo}. A further  pullback of $\omega_{\alpha}$ along $\pi_i^g$ yields a holomorphic Maurer-Cartan form on $T^*X_i^g|_{W_{\alpha}}$ with values in $\mc{A}_{n-l_g^i, l_g^i}^{\langle g\rangle}$, as desired. For each linearly independent trace $\phi$ of $\widehat{\mc{D}}_{l_g^i}\rtimes\langle g\rangle$ we define a $\GL_{n-l_g^i}(\bb C)\times Z$-basic normalized $2n-2l_g^i$ Hochschild cocycle $\psi_{2n-2l_g^i}$ of $\mc{A}_{n-l_g^i, l_g^i}^{\langle g\rangle}$ with values in the dual left $(\mc{A}_{n-l_g^i, l_g^i}^{\langle g\rangle})^e$-module $\mc{A}_{n-l_g^i, l_g^i}^{\langle g\rangle*}$by 
\[\psi_{2n-2l_g^i}(a_1\otimes b_1, \dots, a_{2n-2l_g^i}\otimes b_{2n-2l_g^i}):=\tau_{2n-2l_g^i}(a_1, \dots, a_{2n-2l_g^i})\phi(b_1\cdot\dots\cdot b_{2n-2l_g^i}).\]
This cocycle extends to a linear form on  $\C_{2n-2l_g^i+1}(\mc{A}_{n-l_g^i, l_g^i}^{\langle g\rangle})$, which we again denote by $\psi_{2n-2l_g^i}$. With the help of it, we define locally on basic sets $\ind_{H}^GU_{\alpha}$ an identical to  \eqref{composofmapsdeftrdm} composition of maps which yields the following local form for the generalized trace density morphism for $\mc{D}_X\rtimes G$ in the $G$-equivariant topology
\begin{align}
\label{locgtd}
\chi_i^g:\mc{C}_{\bullet}(\mc{D}_X\rtimes G)|_{\ind_{H}^GU_{\alpha}}&\rightarrow\Big( j_{i*}^g\pi_{i*}^g\Omega_{T^*X_i^g}^{2n-2l_g^i-\bullet}\Big)^G|_{\ind_{H}^GU_{\alpha}}\nonumber\\
(D_0g_0, \dots, D_pg_p)&\mapsto \sum_{k\geq0}(-1)^{\floor{\frac{3k}{2}}}\psi_{2n-2l_g^i}((s_0\circ\pi_{\langle g\rangle}^i, \dots, s_p\circ\pi_{\langle g\rangle}^i)\times(\omega_{\alpha})^k)
\end{align}
where $s_0, \dots, s_p$ are the horizontal sections in $j_{i*}^g\coor{\pi}_*\OO_{\textrm{flat}}(\coor{\mc{N}}\times\mc{A}_{n-l_g^i, l_g^i}^{\langle g\rangle})$ corresponding to $D_0g_0, \dots, D_pg_p$. A lengthy calculation similar to \eqref{gluingtracedensities} shows that the map  \eqref{locgtd} is independent on the choice of $\ind_H^GU_{\alpha}$ by the $\GL_{n-l_g^i}(\bb C)\times Z$-basicness of $\psi_{2n-2l_g^i}$. Hence, it extends to a well-defined morphism of cochain complexes 
\begin{align*}
\chi_i^g:\mc{C}_{\bullet}(\mc{D}_X\rtimes G)&\rightarrow\Big( j_{i*}^g\pi_{i*}^g\Omega_{T^*X_i^g}^{2n-2l_g^i-\bullet}\Big)^G
\end{align*}
on the $G$-equivariant topology of $X$ and $X/G$, respectively. This map is the desired generalized trace density morphism of the Hochschild chain complex of $\mc{D}_X\rtimes G$. 
 \begin{theorem}
 \label{quasiisomthm}
For every choice of a linearly independent trace $\phi_{\langle  g\rangle}^i$ of $\widehat{\mc{D}}_{l_g^i}\rtimes\langle g\rangle$ and for every family of holomorphic  Maurer-Cartan forms $\{\omega_{\alpha}\}$ on the cotangent bundle $T^*X_i^g$ of $X_i^g$ with values in $\mc{A}_{n-l_g^i, l_g^i}^{\langle g\rangle}$ the map
\begin{align}
\label{trdmofdsmashg}
\oplus_{i, g\in G} \chi_i^g: \mc{C}_{\bullet}(\mc{D}_X\rtimes G)\rightarrow\Big(\oplus_{i,g\in G}j_{i*}^g\pi_{i*}^g\Omega_{T^*X_i^g}^{2n-2l_g^i-\bullet}\Big)^G
\end{align} 
is a quasi-isomorphism. Moreover, the induced morphism at the level of homology sheaves is canonical.       
 \end{theorem}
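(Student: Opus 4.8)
The plan is to imitate, in the $G$-equivariant topology of $X$ on the basis $\mf{B}_X^G$, the stalk-wise argument already used in the proof that the morphism \eqref{holomorphictracedensity} is a quasi-isomorphism. Since $\oplus_{i,g}\chi_i^g$ is a morphism of complexes of sheaves, it is a quasi-isomorphism exactly when the induced map on homology sheaves is an isomorphism, and this may be checked on stalks. Fixing a point $x\in X$ with stabilizer $H:=\Stab(x)$, I would first reduce both sides to their stalks at the $G$-orbit of $x$, then establish a degreewise matching of these stalks together with a nonvanishing statement for the map between the one-dimensional pieces.

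For the target I would compute the stalk exactly as in the $\mc{D}_X$ case. A summand $j_{i*}^g\pi_{i*}^g\Omega_{T^*X_i^g}^{2n-2l_g^i-\bullet}$ has nonzero stalk at $x$ only when $x\in X_i^g$, that is $g\in H$ and $i$ the component of $X^g$ through $x$; and by the homotopy equivalence of the holomorphic cotangent bundle $T^*X_i^g$ with its base, together with the fact that $\Omega^{\bullet}_{T^*X_i^g}$ resolves the constant sheaf, the corresponding cohomology stalk collapses to a single copy of $\bb C$ placed in homological degree $2n-2l_g^i$. Taking $G$-invariants collapses each $G$-orbit of summands, so that the stalk at the orbit of $x$ is the $H$-invariant part of $\oplus_{g\in H}(\cdots)_x$, namely a sum of one-dimensional spaces indexed by the $H$-conjugacy classes of elements $g\in H$, with the class of $g$ sitting in degree $2n-2l_g^i$.

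For the source the key input is the localized computation of $\mc{HH}_\bullet(\mc{D}_X\rtimes G)$. Near the orbit of $x$, the sheaf $\mc{D}_X\rtimes G$ is modeled, via the formal-geometric realization of Section \ref{constrofdxg} and Theorem \ref{formalgeomident}, by $\widehat{\mc{D}}_n\rtimes H$ with $H$ acting linearly on $\bb C^n$. I would invoke the $G$-twisted version of the computation behind \eqref{stalkofhsofdiffop}: the Hochschild homology $\hh_\bullet(\widehat{\mc{D}}_n\rtimes H)$ decomposes over the conjugacy classes of $H$, the contribution of a class $[g]$ localizing on the fixed subspace $(\bb C^n)^g$ and producing a single class in degree $2(n-l_g)$, whose generator is the $g$-twisted analogue of the cycle $c_X(U)$ from \eqref{stalkofhsofdiffop}. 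Matching these local classes with the stratified gluing of Section \ref{constrofdxg} then identifies the stalk of $\mc{HH}_\bullet(\mc{D}_X\rtimes G)$ with precisely the same indexed sum of one-dimensional spaces as on the target side, conjugacy class by conjugacy class and degree by degree.

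It remains to check that $\oplus_{i,g}\chi_i^g$ carries the generator of the $(i,g)$-summand of the source onto a nonzero multiple of the generator of the $(i,g)$-summand of the target. This follows from the normalization property of the Feigin--Felder--Shoikhet cocycle $\tau_{2n-2l_g^i}$ together with the linear independence of the chosen trace $\phi_{\langle g\rangle}^i$ of $\widehat{\mc{D}}_{l_g^i}\rtimes\langle g\rangle$, precisely as the identity $\chi_{*,x}([c_{X,x}])=[1]$ was verified for $\mc{D}_X$: the product structure of $\psi_{2n-2l_g^i}$ separates the transverse (trace) and tangential ($\tau$) directions, so the pairing against the generator is nonzero. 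As both stalks are one-dimensional in each relevant degree, nonvanishing forces an isomorphism, whence $\oplus_{i,g}\chi_i^g$ is a quasi-isomorphism. For the canonicity claim I would argue, as in the $\mc{D}_X$ case, that different choices of local holomorphic sections $s_\alpha$, of the Maurer--Cartan forms $\omega_\alpha$, and of the traces yield chain-homotopic morphisms; since the induced maps on the one-dimensional cohomology stalks are pinned down up to the fixed normalization, the map on homology sheaves is independent of all choices. I expect the main obstacle to be the localized Hochschild homology computation for $\mc{D}_X\rtimes G$, and in particular the bookkeeping that matches the conjugacy-class decomposition of $\hh_\bullet(\widehat{\mc{D}}_n\rtimes H)$ with the $G$-invariants of the index set of pairs $(i,g)$ on the target, so that each fixed-point stratum is counted exactly once and in the correct degree $2n-2l_g^i$.
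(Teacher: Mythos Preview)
Your proposal is correct and follows essentially the same approach as the paper's proof: a stalk-wise argument at a point $x$ with stabilizer $H$, reducing both sides to a direct sum over the $H$-conjugacy classes of elements $g\in H$ of one-dimensional spaces sitting in degree $2n-2l_g^i$, and then verifying nonvanishing via the normalization of the Feigin--Felder--Shoikhet cocycle together with the nontriviality of the chosen trace. The paper carries out precisely this bookkeeping, making explicit the decomposition $\mf s:\mc{HH}_\bullet(\mc D_X\rtimes G)\cong\big(\oplus_{g\in G}\mc{HH}_\bullet(\mc D_X,\mc D_X g)\big)^G$, citing \cite{AFLS00} for the splitting of the trace $\phi_{\langle g\rangle}^i$ into twisted traces $\tr_{g^k}$ and \cite{FT10} for the explicit generator $c_{2n-2l_g^i,x}$ of $\hh_\bullet(\mc D_{X,x},\mc D_{X,x}g)$, which are exactly the inputs you flag as ``the $G$-twisted version of the computation behind \eqref{stalkofhsofdiffop}.''
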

 \begin{proof}
 The zeroth Hochschild cohomology group of $\mc{D}_{l_g^i}\rtimes\langle g\rangle$ with values in the dual bimodule $(\mc{D}_{l_g^i}\rtimes\langle g\rangle)^{*}$ defines the space of traces on $\mc{D}_{l_g^i}\rtimes\langle g\rangle$. From \cite[Proposition 3.1]{AFLS00} we infer the isomorphism
 \begin{equation*}
 \mf{r}: \hh^{0}(\mc{D}_{l_g^i}\rtimes\langle g\rangle, \mc{D}_{l_g^i}\rtimes\langle g\rangle^*)\cong\Big(\oplus_{k=1}^{\ord(g)}\hh^0(\mc{D}_{l_g^i}, \mc{D}_{l_g^i}g^k*)^{\langle g\rangle}\Big) 
 \end{equation*}
 where $\ord(g)$ is the order of the generator $g$ in $G$. As each group $\hh^0(\mc{D}_{l_g^i}, \mc{D}_{l_g^i}g^k*)^{\langle g\rangle}$ is spanned by the $g^k$-twisted trace $\tr_{g^k}$, defined in \cite{Fed00}, we have for the image of every trace $\phi_{\langle g\rangle}^i$ of $\mc{D}_{l_g^i}\rtimes\langle g\rangle$, associated to the normal bundle to $X_i^g$, under $\mf{r}$
 \begin{equation*}
\mf{r}(\phi_{\langle g\rangle}^i)=\sum_{k=1}^{\ord(g)}\lambda_k\tr_{g^k}(\cdot)
 \end{equation*}
where for each $k$, the constant $\lambda_k$ is a complex number,   including possibly zero. On the other hand, invoking the fact that $\mc{D}_X\rtimes G$ and $\mc{D}_X$ are sheaves of Calabi-Yau algebras of dimension $2n$ we  have the natural identification
 \begin{align*}
 \mf{s}:\mc{HH}_{\bullet}(\mc{D}_X\rtimes G)&\cong\Big(\oplus_{g\in G}\mc{HH}_{\bullet}(\mc{D}_X, \mc{D}_Xg)\Big)^{G}
 \end{align*}
which combined with the induced map of \eqref{trdmofdsmashg} on homology yields the map
 \begin{align}
 \label{aidinginducedmap}
\oplus_{g, i}\chi_{i*}^g\circ\mf{s}^{-1}:~\Big(\oplus_{g\in G}\mc{HH}_{\bullet}(\mc{D}_X, \mc{D}_Xg)\Big)^{G}
&\rightarrow\Big(\oplus_{i,g\in G}{\bf H}^{-\bullet}\big(j_{i*}^g\pi_{i*}^g\Omega_{T^*X_i^g}^{2n-2l_g^i-\bullet}\big)\Big)^G
 \end{align}
 where 
 \[\chi_{i*}^g\circ\mf{s}^{-1}(D_0g, D_1, \dots, D_p)=\sum_{i_0, \dots, i_p}\sum_{k}(-1)^{\floor{\frac{3k}{2}}}\tilde{\psi}_{2n-2l_g^i}(\widehat{D}_{0, i_0}\otimes \widehat{D}_{0, i_0}^{\perp}g, \dots, \widehat{D}_{p, i_p}\otimes \widehat{D}_{p, i_p}^{\perp})\] 
 with 
 \[\tilde{\psi}_{2n-2l_g^i}(a_0\otimes b_0, \dots, a_{2n-2l_g^i}\otimes b_{2n-2l_g^i})=\tau_{2n-2l_g^i}(a_0, \dots, a_{2n-2l_g^i})\mf{r}(\phi_{\langle g\rangle}^i)(b_0\cdots b_{2n-2l_g^i}),\] 
 $a_i\otimes b_i\in\mc{A}_{n-l_g^i, l_g^i}^{\langle g\rangle}$. If we denote by $q$ the projection of $X$ onto the orbifold quotient $X/G$, then every point $\bar x$ on $X/G$ is the image $\bar x=q(x)$ of an element $x$ on $X$ with stabilizer some subgroup $H$ of $G$. Consequently, we get for the stalk of $\mc{D}_X\rtimes G$ at $\bar x$ 
 \begin{align*}
 (\mc{D}_X\rtimes G)_{\bar x}=\colim_{q^{-1}(\bar V)\ni q^{-1}(\bar x)}\mc{D}_X(q^{-1}(\bar V))\rtimes G=\mc{D}_{X|\ind_H^G\{x\}}\rtimes G
 \end{align*}
 where $\ind_H^G\{x\}:=\sqcup_{l\in G/H} l\{x\}$ is the closed set of $G$-translates of the single point $x$ in $X$. Morphism \eqref{trdmofdsmashg} is a quasi-isomorphism if and only if for every $\bar x$ in $X/G$, the stalk of \eqref{aidinginducedmap} is an isomorphism. For the left hand side of the  induced map on the homology we get
\begin{align*}
\mc{HH}_{\bullet}(\mc{D}_X\rtimes G)_{\bar x}&\cong\hh_{\bullet}((\mc{D}_X\rtimes G)_{\bar x})\\
&\cong\Big(\oplus_{g\in G}\hh_{\bullet}(\mc{D}_{X|\ind_H^G\{x\}}, \mc{D}_{X|\ind_H^G\{x\}}g)\Big)^G\\
&\cong\Big(\oplus_{l\in G/H}\oplus_{g\in lHl^{-1}}\hh_{\bullet}(\mc{D}_{X,lx}, \mc{D}_{X,lx}g)\Big)^G\\
&\cong\Big(\bb CG\otimes_{\bb CH}\oplus_{g\in H}\hh_{\bullet}(\mc{D}_{X, x}, \mc{D}_{X, x}g)\Big)^G\\
&\cong\Big(\oplus_{g\in H}\hh_{\bullet}(\mc{D}_{X, x}, \mc{D}_{X, x}g)\Big)^H\\
&=\oplus_{C_H(g)\in\Conj(H)}\Big(\hh_{\bullet}(\mc{D}_{X, x}, \mc{D}_{X, x}g)\Big)^{Z_H(g)}
 \end{align*}
where $\Conj(H)$ is the set of conjugacy classes in $H$, $C_H(g)$ is the conjugacy class of $g$ in $H$ and $Z_H(g)$ is the centralizer group of $g$ in $H$, respectively. Similarly, we get for the stalk of the homology sheaf at $\bar x$ on the right hand side of  \eqref{trdmofdsmashg} 
\begin{align*}
\Big(\oplus_{i,g\in G}{\bf H}^{-\bullet}\big(j_{i*}^g\pi_{i*}^g\Omega_{T^*X_i^g}^{2n-2l_g^i-\bullet}\big)\Big)_{\bar x}^G&\cong\oplus_{C_H(g)\in\Conj(H)}{\bf H}^{-\bullet}(j_{i*}^g\pi_{i*}^g\Omega_{T^*X_i^g, x}^{2n-2l_g^i-\bullet})^{Z_H(g)}\\
&\cong\oplus_{C_H(g)\in\Conj(H)}{\bf H}^{2n-2l_g^i-\bullet}(\pi_{i*}^g\Omega_{T^*X_i^g, x}^{-\bullet})^{Z_H(g)}\\
&\cong\oplus_{C_H(g)\in\Conj(H)}{\bf H}^{2n-2l_g^i-\bullet}(\Omega_{X_i^g, x}^{-\bullet})^{Z_H(g)}\\
&\cong\oplus_{C_H(g)\in\Conj(H)}{\bf H}^{2n-2l_g^i-\bullet}(\mc{A}_{X_i^g, \bb C, x}^{-\bullet})^{Z_H(g)}\\
\end{align*}
where we dropped the index $i$, because a point cannot  simultaneously be on more than one connected component of a fixed point submanifold, and in the third line we used the holomorphic Poincare's Lemma and the last isomorphism follows from the fact that $\mc{A}_{X, \bb C}^{\bullet}$ and $\Omega_X^{\bullet}$ are both resolutions of the constant sheaf $\bb C_X$. We know from \cite{FT10} that for every $g$ in $G$, the cohomology group $\hh_{\bullet}(\mc{D}_{X, x}, \mc{D}_{X, x}g)$ is spanned by the cocycle 
\[c_{2n-2l_g^i, x}:=\sum_{\sigma\in S_{2n-2l_g^i}}1\otimes u_{\sigma(1)}\otimes\dots\otimes u_{\sigma(2n-2l_g^i)}\] with $u_{2j-1}=\partial_{x_j}$ and $u_{2j}=x_j$. The normalisation property of the Feigin-Felder-Shoikhet cocycle implies 
\begin{align*}
\chi_{\langle g\rangle, x}^i(\mf{s}^{-1}(c_{2n-2l_g^i, x}))&=(-1)^{\floor{3(n-l_g^i)}}\psi_{2n-2l_g^i}(\mf{s}^{-1}(c_{2n-2l_g^i, x}))\\
&=(-1)^{\floor{3(n-l_g^i)}}\sum_{k=1}^{\ord(g)}\lambda_k\tau_{2n-2l_g^i}(c_{2n-2l_g^i, x})\tr_{g^k}(1)\\
&=(-1)^{\floor{3(n-l_g^i)}}\sum_{k=1}^{\ord(g)}\lambda_k\tr(g^k)\neq0
\end{align*}
which means that for each $g$, the composition $\chi_{\langle g\rangle, x*}^i\circ\mf{s}_x^{-1}$ is a non-zero map from the generator of the $1$-dimensional homology group 
$\hh_{\bullet}(\mc{D}_{X, x}, \mc{D}_{X, x}g)^{Z_H(g)}$ to the $1$-dimensional homology group ${\bf H}^{2n-2l_g^i-\bullet}(\mc{A}_{X_{\langle g\rangle}, \bb C, x}^{-\bullet})^{Z_H(g)}$. Hence, it is an isomorphism. Thus, $\chi_{\langle g\rangle, x*}^i$ and correspondingly the direct sum $\oplus_{i, g\in G}\chi_{\langle g\rangle, x*}^i$ are invertible maps. Ergo, Morphism  \eqref{trdmofdsmashg} is a quasi-isomorphism. Moreover, by normalization the stalk of the map \eqref{trdmofdsmashg} can be made independant on the particular choice of a trace $\phi_{\langle g\rangle}^i$. Ergo,  Morphism  \eqref{trdmofdsmashg} induces a canonical isomorphism at the level of homology.  
\end{proof}
Composing the quasi-isomorphisms from Corollary \ref{calabiyauqiso} and \eqref{trdmofdsmashg} one obtains a new quasi-isomorphism which can be interpreted as a generalized trace density morphism for the Hochschild cochian complex of $\mc{D}_X\rtimes G$. To avoid repetition we leave the standard proof of the next result to the reader.
\begin{corollary}
\label{cohomologicaltrdcor}
There is a quasi-isomorphism 
\begin{align}
\label{cohomologicaltrd}
\mc{X}:\mc{C}^{\bullet}(\mc{D}_X\rtimes G, \mc{D}_X\rtimes G)\rightarrow\Big(\oplus_{i,g\in G}j_{i*}^g\pi_{i*}^g\Omega_{T^*X_i^g}^{\bullet-2l_g^i}\Big)^G.
\end{align}  
The induced morphism at the level of cohomology sheaves is canonical.
\end{corollary}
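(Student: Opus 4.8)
The plan is to realize $\mc{X}$ as the composite of the two quasi-isomorphisms already at our disposal: the cap-product duality $\mu$ of Corollary \ref{calabiyauqiso} followed by the generalized trace density quasi-isomorphism $\oplus_{i,g\in G}\chi_i^g$ of Theorem \ref{quasiisomthm}. Fix a Hochschild chain $2n$-cycle $\Theta$ of $\mc{D}_X\rtimes G$ representing the fundamental class supplied by the Calabi-Yau property (Corollary \ref{dxgcalabiyau}), and set $\mc{X}:=\big(\oplus_{i,g\in G}\chi_i^g\big)\circ\mu$. First I would check that the degrees compose correctly. A cochain $f$ of cohomological degree $p$ is sent by $\mu$ to the chain $\Theta\cap f$ of homological degree $2n-p$; applying $\chi_i^g$, which carries a chain of homological degree $q$ into $\Omega_{T^*X_i^g}^{2n-2l_g^i-q}$, with $q=2n-p$ yields a section of $\Omega_{T^*X_i^g}^{p-2l_g^i}$. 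Hence the target of $\mc{X}$ is exactly $\big(\oplus_{i,g\in G}j_{i*}^g\pi_{i*}^g\Omega_{T^*X_i^g}^{\bullet-2l_g^i}\big)^G$, as claimed.

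Since $\mu$ and $\oplus_{i,g\in G}\chi_i^g$ are both morphisms of complexes of sheaves in the $G$-equivariant topology of $X$, so is their composite $\mc{X}$, and the map \eqref{cohomologicaltrd} is obtained by substituting the cap product $\Theta\cap f$ into the local trace-density expression \eqref{locgtd}. The quasi-isomorphism statement is then immediate: Corollary \ref{calabiyauqiso} guarantees that $\mu$ is a quasi-isomorphism and Theorem \ref{quasiisomthm} that $\oplus_{i,g\in G}\chi_i^g$ is one, and the composite of two quasi-isomorphisms is again a quasi-isomorphism.

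For the canonicity of the induced morphism on cohomology sheaves, I would argue that $\mu_*$ is precisely the van den Bergh duality isomorphism attached to the Calabi-Yau structure, and is therefore canonical. Concretely, the stalk computations underlying Theorem \ref{quasiisomthm} show that the relevant top homology sheaves are locally one-dimensional, so the class $[\Theta]$ is determined up to a nowhere-vanishing scalar; pinning this scalar down by the same normalization of the Feigin--Felder--Shoikhet cocycle $\tau$ employed in \eqref{locgtd} renders $\mu_*$ independent of the choice of $\Theta$. Since $\big(\oplus_{i,g\in G}\chi_i^g\big)_*$ is already canonical by Theorem \ref{quasiisomthm}, the composite $\mc{X}_*$ is canonical as well.

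The main obstacle will be precisely this normalization bookkeeping: one must verify that the scalar ambiguity in $[\Theta]$ is locally constant along each connected component of every fixed-point stratum $X_i^g$ and is compatible with the $G$-action, so that the composite respects the $G$-invariant direct-sum decomposition on the right-hand side and the sign conventions $(-1)^{\floor{3k/2}}$ do not introduce discrepancies between the chain- and cochain-level formulas. Once the fundamental class $\Theta$ is chosen $G$-equivariantly and normalized stratum by stratum, the remaining verifications reduce to the routine sign and shuffle-product manipulations already carried out for \eqref{gluingtracedensities}, which is why the detailed proof may safely be left to the reader.
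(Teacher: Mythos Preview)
Your proposal is correct and follows precisely the approach the paper intends: the text preceding the corollary explicitly states that $\mc{X}$ is obtained by composing the quasi-isomorphism $\mu$ of Corollary~\ref{calabiyauqiso} with the trace density quasi-isomorphism \eqref{trdmofdsmashg} of Theorem~\ref{quasiisomthm}, and leaves the details to the reader. Your degree check, your observation that a composite of quasi-isomorphisms is a quasi-isomorphism, and your argument for canonicity via the canonicity of $\mu_*$ and of $(\oplus_{i,g}\chi_i^g)_*$ are exactly the verifications the paper has in mind.
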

\subsection{The space of filtered infinitesimal deformations of $\mc{D}_X\rtimes G$}
\label{spfiltinfdef}
In the case of the Calabi-Yau algebras $(\mc{D}_X)^G$ and $\mc{D}_X\rtimes G$ the inclusion of the subcomplex of filtration preserving Hochschild cochains into the complex of all Hochschild cochains is a quasi-isomorphism. We only show this for $\mc{D}_X\rtimes G$ as the proof for $(\mc{D}_X)^G$ is analogous. 
\begin{proposition}
\label{caninclqiso}
The canonical inclusion $\mf{i}:\mc{C}_f^{\bullet}(\mc{D}_X\rtimes G, \mc{D}_X\rtimes G)\hookrightarrow\mc{C}^{\bullet}(\mc{D}_X\rtimes G, \mc{D}_X\rtimes G)$ is a quasi-isomorphism.
\end{proposition}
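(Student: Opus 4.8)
The plan is to reduce the statement to the two-out-of-three property for quasi-isomorphisms. Composing $\mathfrak{i}$ with the generalized trace density morphism $\mc{X}$ from Corollary \ref{cohomologicaltrdcor} produces a commutative triangle
\[
\begin{tikzcd}[row sep=large]
\mc{C}_f^{\bullet}(\mc{D}_X\rtimes G, \mc{D}_X\rtimes G) \arrow[r, hook, "\mathfrak{i}"] \arrow[dr, "\mc{X}\circ\mathfrak{i}"'] & \mc{C}^{\bullet}(\mc{D}_X\rtimes G, \mc{D}_X\rtimes G) \arrow[d, "\mc{X}"] \\
& \Big(\oplus_{i,g\in G}j_{i*}^g\pi_{i*}^g\Omega_{T^*X_i^g}^{\bullet-2l_g^i}\Big)^G.
\end{tikzcd}
\]
Since $\mc{X}$ is already known to be a quasi-isomorphism, it suffices to prove that the diagonal arrow $\mc{X}_f:=\mc{X}\circ\mathfrak{i}$ is a quasi-isomorphism; the inclusion $\mathfrak{i}$ is then one as well.

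Because all three objects are complexes of sheaves, I would check that $\mc{X}_f$ is an isomorphism on the stalks of the cohomology sheaves. Fixing $\bar{x}\in X/G$ with a representative $x\in X$ of stabilizer $H=\Stab(x)$, I reuse the stalk bookkeeping from the proof of Theorem \ref{quasiisomthm}: the stalk of the target at $\bar{x}$ is a finite sum over the conjugacy classes $C_H(g)\in\Conj(H)$, each summand being the one-dimensional space $\mathbf{H}^{2n-2l_g^i-\bullet}(\mc{A}_{X_i^g,\bb C,x}^{-\bullet})^{Z_H(g)}$ concentrated in a single degree. Under the van den Bergh duality $\mu$ of Corollary \ref{calabiyauqiso}, the corresponding generator of $\mc{HH}^{\bullet}(\mc{D}_X\rtimes G)_{\bar x}$ is the class dual to the explicit Hochschild cycle
\[
c_{2n-2l_g^i,x}=\sum_{\sigma\in S_{2n-2l_g^i}}1\otimes u_{\sigma(1)}\otimes\cdots\otimes u_{\sigma(2n-2l_g^i)},\qquad u_{2j-1}=\partial_{x_j},\ u_{2j}=x_j.
\]

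\textbf{Surjectivity on cohomology.} The decisive point is that this generator is built entirely from the operators $\partial_{x_j}$ (order one) and $x_j$ (order zero), so it has bounded order; consequently its van den Bergh dual admits a representative of bounded order, which one checks is filtration-preserving, i.e. lies in the subcomplex $\mc{C}_f^{\bullet}$. Evaluating $\mc{X}_f$ on this representative then reproduces verbatim the computation at the end of the proof of Theorem \ref{quasiisomthm}, which yields $(-1)^{\floor{3(n-l_g^i)}}\sum_{k=1}^{\ord(g)}\lambda_k\tr(g^k)\neq 0$. Hence every generator of the stalk cohomology of the target lies in the image of $H^{\bullet}(\mc{X}_f)$, so $\mc{X}_f$ is surjective on stalk cohomology.

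\textbf{Dimension matching (the main obstacle).} What remains, and where the real work lies, is to show that $\mc{C}_f^{\bullet}$ carries no cohomology beyond these classes, so that $H^{\bullet}(\mc{X}_f)$ is also injective; a priori filtered cocycles could fail to be filtered coboundaries even when they bound in the full complex. The plan is to control the filtered cohomology from above by means of the decreasing filtration $F^{\bullet}\mc{C}_f^{\bullet}$ introduced before Theorem \ref{filtdeformationspace}: its associated graded is the symbol-level (polynomial) Hochschild cochain complex of the associated graded algebra $\Sym^{\bullet}(\mc{T}_X)\rtimes G$, which by Corollary \ref{dxgcalabiyau} is itself Calabi-Yau of dimension $2n$. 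A Hochschild–Kostant–Rosenberg computation for $\Sym^{\bullet}(\mc{T}_X)\rtimes G$, combined with the same localization-to-fixed-loci bookkeeping used for $\mc{D}_X\rtimes G$, should show that the $E_1$-page of the resulting spectral sequence has stalks of exactly the dimensions found above; this forces degeneration and pins the stalk dimension of $\mc{HH}_f^{\bullet}(\mc{D}_X\rtimes G)_{\bar x}$ to that of the target. Together with the surjectivity just established, this makes $\mc{X}_f$ an isomorphism on every stalk, hence a quasi-isomorphism, and by two-out-of-three so is $\mathfrak{i}$.
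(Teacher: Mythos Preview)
Your overall architecture matches the paper's: the decisive computation is the spectral sequence for $\mc{C}_f^{\bullet}$ coming from the decreasing filtration $F^{\bullet}$, whose $E_0$-page is the Hochschild cochain complex of $\Sym^{\bullet}(\mc{T}_X)\rtimes G$, whose $E_1$-page is computed via the Calabi--Yau property (Corollary~\ref{dxgcalabiyau}) and HKR/localization to fixed loci, and which collapses on the $p$-axis to yield the same finite stalk dimensions as $\mc{HH}^{\bullet}(\mc{D}_X\rtimes G)$. Your triangle with $\mc{X}$ is only a repackaging: since $\mc{X}_*$ is an isomorphism, surjectivity or injectivity of $(\mc{X}_f)_*$ is literally the same question as that of $\mathfrak{i}_*$.

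The genuine gap is your surjectivity step. You assert that the van den Bergh dual of the explicit cycle $c_{2n-2l_g^i,x}$ ``admits a representative of bounded order, which one checks is filtration-preserving.'' But the duality $\mu:f\mapsto\Theta\cap f$ of Corollary~\ref{calabiyauqiso} runs from cochains to chains; its inverse exists only at the level of cohomology, with no explicit cochain-level formula, so there is no mechanism for producing a filtered representative out of the fact that $c_{2n-2l_g^i,x}$ involves only low-order operators. Indeed, the statement ``the dual cohomology class has a filtered cocycle representative'' is precisely the surjectivity of $\mathfrak{i}_*$ you are trying to prove, so this step is circular as written. The paper sidesteps this by arguing \emph{injectivity} of $\mathfrak{i}_*$ instead, and then invoking the finite-dimensional dimension equality from the spectral sequence to conclude. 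Once your spectral sequence computation is in place, you should replace the surjectivity paragraph by that injectivity observation (or, more robustly, by showing that $\mathfrak{i}$ induces an isomorphism already on the $E_1$-pages of the two parallel spectral sequences for $\mc{C}_f^{\bullet}$ and $\mc{C}_{\bullet}$).
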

\begin{proof}
Denote by $X^{\times k}$ the $k$-fold Cartesian product of $X$. Let $\delta_k: X\rightarrow X^{\times k}, x\mapsto(x, \dots, x)$ be the diagonal embedding of $X$ in $X^{\times k}$. Recall from the definition of the exterior tensor product that $\delta_k^*\mc{D}_{X^{\times k}}\cong\mc{D}_X^{\hat{\otimes}k}$ and $\delta_k^*\Sym^{\bullet}(\mc{T}_{X^{\times k}})=\Sym^{\bullet}(\mc{T}_X)^{\hat{\otimes}k}$. By definition the nonnegative decreasing filtration of $\mc{C}_{f}^{\bullet}(\mc{D}_X\rtimes G, \mc{D}_X\rtimes G)$ results in a first quadrant spectral sequence $E_r^{pq}$ with zeroth sheet
\begin{align*}
E_0^{pq}&:=\gr^p\mc{C}_f^{p+q}(\mc{D}_X\rtimes G, \mc{D}_X\rtimes G)\\
&\cong\{F\in\sHom_{\bb{C}}(\gr^q(\mc{D}_X\rtimes G^{\hat{\otimes}n}), \gr^{q-p}(\mc{D}_X\rtimes G))~|~\textrm{for all $q\geq p$ and $n\geq0$}\}\\
&\cong\{F\in\sHom_{\bb{C}}(\gr^q(\delta_n^*\mc{D}_{X^{\times n}})\rtimes G^{\times n}, \gr^{q-p}(\mc{D}_X)\rtimes G)~|~\textrm{for all $q\geq p$, $n\geq0$}\}\\
&\cong\{F\in\sHom_{\bb{C}}(\delta_n^*\Sym^q(\mc{T}_{X^{\times n}})\rtimes G^{\times n}, \Sym^{q-p}(\mc{T}_X)\rtimes G)~|~\textrm{for all $q\geq p$, $n\geq0$}\}\\
&\cong\{F\in\sHom_{\bb{C}}(\big[\Sym^{\bullet}(\mc{T}_X)^{\hat{\otimes}n}\big]_q\rtimes G^{\times n}, \Sym^{q-p}(\mc{T}_X)\rtimes G)~|~\textrm{for all $q\geq p$,  $n\geq0$}\}\\
&\cong\{F\in\sHom_{\bb{C}}(\big[\Sym^{\bullet}(\mc{T}_X)\rtimes G^{\hat{\otimes}n}\big]_q, \Sym^{q-p}(\mc{T}_X)\rtimes G)~|~\textrm{for all $q\geq p$, $n\geq0$}\}\\
&\cong\big[\mc{C}^{p+q}(\Sym^{\bullet}(\mc{T}_X)\rtimes G,\Sym^{\bullet}(\mc{T}_X)\rtimes G)\big]_{p}
\end{align*}
with $p+q=n$ where in the third and the fifth line we used the definition of a topologically completed tensor product, and $\big[\cdot\big]_q$ denotes the homogeneous part of (homological) degree $q$ of the respective graded algebra. To shorten in the following the notation denote by $\Delta_G$ the image of the diagonal homomorphism $\Delta: G\rightarrow G\times G$. The first sheet of the spectral sequence is accordingly becomes 
\begin{align*}
E_1^{pq}&={\bf{H}}^{p+q}(\gr^p\mc{C}_f^{\bullet}(\mc{D}_X\rtimes, \mc{D}_X\rtimes G))\\
&\cong\big[{\bf H}^{p+q}(\mc{C}^{\bullet}(\Sym^{\bullet}(\mc{T}_X)\rtimes G, \Sym^{\bullet}(\mc{T}_X)\rtimes G))\big]_p\\
&=\big[\sExt_{\Sym^{\bullet}(\mc{T}_X)^e\rtimes G\times G}^{\bullet}(\Ind^{\Sym^{\bullet}(\mc{T}_X)^e\rtimes G\times G}_{\Sym^{\bullet}(\mc{T}_X)^e\rtimes\Delta_G}\Sym^{\bullet}(\mc{T}_X), \Sym^{\bullet}(\mc{T}_X)\rtimes G)\big]_p\\
&=\big[\big(\oplus_{g\in G}\sExt_{\Sym^{\bullet}(\mc{T}_X)^e}^{\bullet}(\Sym^{\bullet}(\mc{T}_X), \Sym^{\bullet}(\mc{T}_X)\cdot g)\big)^{G}\big]_p\\
&\cong\Big(\oplus_{g\in G}\big[\mc{HH}^{p+q}(\Sym^{\bullet}(\mc{T}_X), \Sym^{\bullet}(\mc{T}_X) g)\big]_p\Big)^G\\
&\cong\Big(\oplus_{g\in G}\big[\mc{HH}_{2n-p-q}( \Sym^{\bullet}(\mc{T}_X), \Sym^{\bullet}(\mc{T}_X)\cdot g)\big]_p\Big)^G
\end{align*}
where in the fourth line we used \cite[Proposition 2.2.9]{KS94} and in the last line we used that $\Sym^{\bullet}(\mc{T}_X)$ is a sheaf of Calabi-Yau algebras with dimension $2n$ as per Proposition \ref{symcalabiyau}.We proceed as in \cite[Proposition 8]{DE05} to further simplify $E_1^{pq}$. There is a natural holomorphic splitting of the holomorphic cotangent bundle $T^*X=T^*X^g\oplus N^g$, where $N^g$ is the normal bundle to the fixed point submanifold $X^g$. With the help of the resolution \eqref{bimodkoszulresolsym} of $\Sym^{\bullet}(\mc{T}_X)$, we get for the homology sheaf of $\mc{C}_{\bullet}(\Sym^{\bullet}(\mc{T}_X), \Sym^{\bullet}(\mc{T}_X)\cdot g)$
\begin{align}
\label{twistedhhofsym}
&\mc{HH}_{\bullet}(\Sym^{\bullet}(\mc{T}_X), \Sym^{\bullet}(\mc{T}_X)\cdot g)=\sTor_{\bullet}^{~~\Sym^{\bullet}(\mc{T}_X)^e}\big(\Sym^{\bullet}(\mc{T}_X)\cdot g, \Sym^{\bullet}(\mc{T}_X)\big)\nonumber\\
&=\h_{\bullet}(\Sym^{\bullet}(\mc{T}_X)\cdot g\otimes_{\Sym^{\bullet}(\mc{T}_X)^e}\Sym^{\bullet}(\mc{T}_X)\otimes_{\bb C}\Sym^{\bullet}(\mc{T}_X)\otimes_{\delta^{-1}\OO_{Y|\Delta}}\bigwedge^{\bullet}\delta^{-1}\mc{T}_{Y|\Delta}^*)\nonumber\\
&\cong\h_{\bullet}(\Sym^{\bullet}(\mc{T}_X)\cdot g\otimes_{\delta^{-1}\OO_{Y|\Delta}}\bigwedge^{\bullet}\delta^{-1}\mc{T}_{Y|\Delta}^*)\nonumber\\ 
&=\Sym^{\bullet}(\mc{T}_{X^g})\otimes_{\delta^{-1}\OO_{Y^g|\Delta}}\bigwedge^{\bullet-2l_g}\delta^{-1}\mc{T}_{Y^g|\Delta}^*
\end{align}
where the complex in last line of \eqref{twistedhhofsym} 
is equipped with the vanishing differential and $l_g$ is the complex codimension of $X^g$ in $X$. The step from the second to the last line to the last line in \eqref{twistedhhofsym} is a sheaf theoretic version of \cite[Proposition 4]{Ann05}. 
Making use of the relation $\Omega_{Y}^1\cong\mc{T}_Y^*$ we get 
\begin{equation}
E^{pq}_1=\Big(\oplus_{g\in G}\Sym^{\bullet}(\mc{T}_{X^g})\otimes_{\delta^{-1}\OO_{Y^g|\Delta}}\delta^{-1}\bigwedge^{2n-2l_g-(p+q)}\mc{T}_{Y^g|\Delta}^*\Big)^G.
\end{equation}
As $E_r^{pq}$ is a first quadrant spectral sequence, it converges to $\mc{HH}_f^{\bullet}(\mc{D}_X\rtimes G, \mc{D}_X\rtimes G)$. On the other hand the spectral cohomological sequence $E_r^{pq}$ collapses on the $p$-axis, as shown in the picture below 
\begin{equation*}
\begin{tikzpicture}
\draw[thick,->] (0,0) -- (7,0) node[anchor=north east] {$p$};
\draw[thick,->] (0,0) -- (0,4.5) node[anchor=north west] {$q$};
\draw (0 cm,1pt) -- (0 cm,-1pt) node[anchor=north] {};
\draw (1cm, 1pt) -- (1cm, -1pt) node[anchor=north] {$0$};
\draw (3cm, 1pt) -- (3cm, -1pt) node[anchor=north] {$1$};
\draw (5cm, 1pt) -- (5cm, -1pt) node[anchor=north] {$2$};
\draw(1pt, 0 cm)--(-1pt, 0 cm) node[anchor=east] {$$};
\draw (1pt,1 cm) -- (-1pt,1 cm) node[anchor=east] {$0$};
\draw (1pt,2 cm) -- (-1pt,2 cm) node[anchor=east] {$1$};
 \draw (1pt,3 cm) -- (-1pt,3 cm) node[anchor=east] {$2$};
 \draw (1cm, 3cm) node []{$0$} ;
 \draw (3cm, 3cm) node []{$0$};
 \draw (5cm, 3cm) node []{$0$};

 \draw (1cm, 2cm) node []{$0$};
 \draw (3cm, 2cm) node []{$0$};
 \draw (5cm, 2cm) node []{$0$};

 \draw (1cm, 1cm) node []{$\bullet$} node[anchor=north] {$E_2^{00}$};
 \draw (3cm, 1cm) node []{$\bullet$} node[anchor=north] {$E_2^{10}$};
 \draw (5cm, 1cm) node []{$\bullet$} node[anchor=north] {$E_2^{20}$}; 

\draw[dashed](1cm, 3.2cm)--(1cm, 4cm) {};
\draw[dashed](3cm, 3.2cm)--(3cm, 4cm) {};
\draw[dashed](5cm, 3.2cm)--(5cm, 4cm) {};
\draw[dashed](5.2,3)--(7,3){};
\draw[dashed](5.2,2)--(7,2){};
\draw[dashed](5.2,1)--(7,1){};
\end{tikzpicture}
\end{equation*}   
Accounting for the fact that the differential $d_1^{pq}: E_1^{pq}\rightarrow E_1^{p+1, q}$ is the one of cohomological degree $-1$, obtained from the Koszul resolution of $\Sym^{\bullet}(\mc{T}_X)$, we obtain
\begin{equation}
\label{secondsheetscos}
E_2^{p0}:=\Big(\oplus_{g\in G}{\bf{H}}^{-p}\big(\Sym^{\bullet}(\mc{T}_{X^g})\otimes_{\delta^{-1}\OO_{Y^g|\Delta}}\delta^{-1}\bigwedge^{2n-2l_g-\bullet}\mc{T}_{Y^g|\Delta}^*\big)\Big)^G.
\end{equation}
Combining the convergence of $E_r^{pq}$ with  \eqref{secondsheetscos} yields 
\begin{equation}
\label{convergencescos}
E_{p0}^{\infty}=\Big(\oplus_{g\in G}{\bf{H}}^{-p}\big(\Sym^{\bullet}(\mc{T}_{X^g})\otimes_{\delta^{-1}\OO_{Y^g|\Delta}}\delta^{-1}\Omega_{Y^g|\Delta}^{2n-2l_g-\bullet}\big)\Big)^G\cong\mc{HH}_f^p(\mc{D}_X\rtimes G, \mc{D}_X\rtimes G).
\end{equation}
On the other hand, the filtration of $\mc{D}_X\rtimes G$ induces a bounded below and exhaustive filtration on the Hochschild chain complex $\mc{C}_{\bullet}(\mc{D}_X\rtimes G)$. This defines a spectral homological sequence $E_{pq}^r$ with 
\[E_{pq}^1:=\mc{HH}_{p+q}(\Sym^{\bullet}(\mc{T}_X)\rtimes G)\cong\Big(\oplus_{g\in G}\Sym^{\bullet}(\mc{T}_X^g)\otimes_{\delta^{-1}\OO_{Y^g|\Delta}}\bigwedge^{p+q-2l_g}\delta^{-1}\mc{T}_{Y^g|\Delta}^*\Big)^G.\]
According to the classical convergence theorem \cite[Theorem 5.5.1]{weibel94} this  spectral homological sequence converges to $\mc{HH}_{p+q}(\mc{D}_X\rtimes G)\cong\mc{HH}^{2n-p-q}(\mc{D}_X\rtimes G, \mc{D}_X\rtimes G)$ where the last isomorphism derives from the fact that $\mc{D}_X\rtimes G$ is a sheaf of Calabi-Yau algebras of dimension $2n$. Obviously, $E_{pq}^r$ collapses on the $p$-axis. Hence, 
\begin{equation}
\label{convergenceshs}
E_{p0}^{\infty}=\Big(\oplus_{g\in G}{\bf{H}}^{p}(\Sym^{\bullet}(\mc{T}_X^g)\otimes_{\delta^{-1}\OO_{Y^g|\Delta}}\bigwedge^{\bullet-2l_g}\delta^{-1}\mc{T}_{Y^g|\Delta}^*)\Big)^G\cong\mc{HH}^{2n-p}(\mc{D}_X\rtimes G, \mc{D}_X\rtimes G)
\end{equation}
From Isomorphism \eqref{convergencescos} and Isomorphism \eqref{convergenceshs} we infer that $\mc{HH}_f^p(\mc{D}_X\rtimes G, \mc{D}_X\rtimes G)\cong\mc{HH}^p(\mc{D}_X\rtimes G, \mc{D}_X\rtimes G)$ for every $p\leq 2n$. 
Since the map $\mf{i}$ is the canonical inclusion, at the level of cohomologies $\mf{i}_*$ remains injective. We leave it to the reader to convince himself that the cohomology group sheaves of the complex of sheaves
\[\Big(\oplus_{g\in G}\Sym^{\bullet}(\mc{T}_X^g)\otimes_{\delta^{-1}\OO_{Y^g|\Delta}}\bigwedge^{\bullet}\delta^{-1}\mc{T}_{Y^g|\Delta}^*\Big)^G\] 
are actually finite dimensional which implies that $\mf{i}_*$ is an isomorphism, as desired. 
\end{proof}
\begin{proposition}
\label{scctrhcc}
There is an isomorphism of $\bb C$-vetor spaces
\[\check{\h}^2(\mc{U}, \sigma_{\geq1}\mc{C}_f^{\bullet}(\mc{D}_X\rtimes G, \mc{D}_X\rtimes G))\cong\Big(\check{\h}^2(\mc{U}, \Omega_X^{\geq1})\oplus\hspace{-1em}\bigoplus\limits_{\codim(X_i^g)=1}\hspace{-1em}\check{\h}^0(\mc{U}, j_{i*}^g\pi_{i*}^g\Omega_{T^*X_i^g}^{\bullet})\Big)^G.\]
\end{proposition}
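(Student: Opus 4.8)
The plan is to transport the computation to the side of differential forms via the available quasi-isomorphisms, and then to carry out the degree-$2$ hypercohomology calculation stratum by stratum, treating the brutal truncation on the principal stratum with special care. First I would compose the quasi-isomorphism $\mf{i}$ of Proposition \ref{caninclqiso} with the quasi-isomorphism $\mc{X}$ of Corollary \ref{cohomologicaltrdcor} to obtain a degree-preserving quasi-isomorphism $\Psi:\mc{C}_f^{\bullet}(\mc{D}_X\rtimes G,\mc{D}_X\rtimes G)\to\mc{K}^{\bullet}$, where $\mc{K}^{\bullet}:=\big(\oplus_{i,g}j_{i*}^g\pi_{i*}^g\Omega_{T^*X_i^g}^{\bullet-2l_g^i}\big)^G$ is the $G$-invariant part of a direct sum of holomorphic de Rham complexes of the cotangent bundles $T^*X_i^g$, the $(i,g)$-summand being placed in cohomological degrees $\geq 2l_g^i$. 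Since $G$ is finite and we work over $\bb C$, the functor of $G$-invariants is exact, so the \v Cech hypercohomology of $\mc{K}^{\bullet}$ with respect to the Stein cover $\mc{U}$ splits as the $G$-invariant part of a direct sum indexed by the pairs $(i,g)$. Because each $\pi_{i*}^g\Omega^{\bullet}_{T^*X_i^g}$ computes $R\pi_{i*}^g\,\bb C_{T^*X_i^g}=\bb C_{X_i^g}$ (the fibres of $\pi_i^g$ are contractible and $\pi_i^g$ is a Stein morphism, so higher direct images vanish), the cohomology sheaves of the $(i,g)$-summand are concentrated in degree $2l_g^i$.

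Next I would run the hypercohomology spectral sequence $E_2^{pq}=\check{\h}^p(\mc{U},\mc{H}^q)\Rightarrow\check{\h}^{p+q}$ and read off the degree-$2$ contribution of each stratum. A summand with $\codim(X_i^g)=l_g^i\geq 2$ begins in cohomological degree $2l_g^i\geq 4$ and hence contributes nothing to total degree $2$. A summand with $l_g^i=1$, i.e. $g$ a complex reflection with codimension-$1$ component $X_i^g$, is untouched by $\sigma_{\geq1}$, and its degree-$2$ hypercohomology equals, after undoing the shift by $2l_g^i=2$, the degree-$0$ hypercohomology $\check{\h}^0(\mc{U},j_{i*}^g\pi_{i*}^g\Omega^{\bullet}_{T^*X_i^g})$ of its unshifted de Rham complex; summing over these strata and taking $G$-invariants produces the second term on the right-hand side, the reflection (``$c$'') parameters. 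The remaining summand is the principal one, $g=\id$ and $l_g^i=0$, on which $\sigma_{\geq1}$ acts nontrivially; this is the source of the first term $\check{\h}^2(\mc{U},\Omega_X^{\geq1})^G$.

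The hard part is precisely this principal summand, because brutal truncation is not a derived functor and therefore does \emph{not} commute with the trace-density quasi-isomorphism: the degree-$0$ sheaves $\mc{C}_f^0=\mc{D}_X\rtimes G$, $\mc{K}^0=(\pi_*\OO_{T^*X})^G=\Sym^{\bullet}(\mc{T}_X)^G$ and $\OO_X$ are mutually non-isomorphic, and cutting $\pi_*\Omega^{\bullet}_{T^*X}$ at total degree $1$ resurrects the fibre-positive forms, contributing spurious classes valued in $\Sym^{\geq1}(\mc{T}_X)$. To circumvent this I would not truncate the $T^*X$-complex at all; instead I would use that the order filtration on $\mc{D}_X$ makes the construction of Section \ref{sectdoodo} filtered, so that the fibrewise holomorphic Poincar\'e lemma degenerates $\pi_*\Omega^{\bullet}_{T^*X}$ onto its fibre-degree-zero part $\Omega_X^{\bullet}$ \emph{compatibly} with the truncation, yielding $\sigma_{\geq1}\Omega_X^{\bullet}=\Omega_X^{\geq1}$ on $X$ rather than on $T^*X$. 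Concretely, at the level of the \v Cech--Hochschild cocycles furnished by Theorem \ref{filtdeformationspace} I would represent a filtered square-zero extension of $\mc{D}_X$ by its factor set $f_{\alpha}$ and its gluing datum $h_{\alpha\beta}$, and match the pair $(f_{\alpha},h_{\alpha\beta})$ modulo coboundaries with a class in $\check{\h}^2(\mc{U},\Omega_X^{\geq1})$; this is the sheaf-theoretic incarnation of the classification of twisted differential operator algebras, and it is exactly here that the filtration does the essential work.

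Finally I would reassemble the pieces: the principal stratum gives $\check{\h}^2(\mc{U},\Omega_X^{\geq1})^G$, the reflection strata give the displayed direct sum of $\check{\h}^0$-terms, and the higher-codimension strata give nothing, so that after taking $G$-invariants (exact in characteristic $0$) the stated isomorphism of $\bb C$-vector spaces follows; for the Stein cover $\mc{U}$ the \v Cech groups compute the corresponding hypercohomology, which identifies both sides with the parameter space of $\mc{H}_{1,c,\psi,X,G}$.
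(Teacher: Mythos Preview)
Your overall architecture is sound: the stratum-by-stratum decomposition via the quasi-isomorphism $\mc{X}\circ\mf{i}$, the observation that strata with $l_g^i\geq 2$ contribute nothing in total degree $2$, and the identification of the codimension-$1$ contributions are all correct and match the paper. You also correctly isolate the genuine difficulty, namely that brutal truncation does not commute with the quasi-isomorphism $\Psi$, so one cannot simply replace $\sigma_{\geq1}\mc{C}_f^{\bullet}$ by $\sigma_{\geq1}\mc{K}^{\bullet}$.

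The gap is in your resolution of this difficulty on the principal stratum. First, a small error: $\mc{C}_f^0(\mc{D}_X\rtimes G,\mc{D}_X\rtimes G)=\OO_X\rtimes G$, not $\mc{D}_X\rtimes G$ (a filtered $0$-cochain must land in filtration degree $0$). More seriously, your two proposed fixes are both incomplete. Saying that the fibrewise Poincar\'e lemma collapses $\pi_*\Omega_{T^*X}^{\bullet}$ onto $\Omega_X^{\bullet}$ ``compatibly with the truncation'' is not an argument: you would need the composite $\mc{C}_f^{\bullet}\to\Omega_X^{\bullet}$ to be an \emph{isomorphism in degree $0$} (not merely a quasi-isomorphism) to conclude anything about the truncated complexes, and nothing in Section~\ref{sectdoodo} establishes this. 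Your fallback, matching the \v Cech--Hochschild pair $(f_\alpha,h_{\alpha\beta})$ directly with a class in $\check{\h}^2(\mc{U},\Omega_X^{\geq1})$ by invoking the TDO classification, is circular: that classification is precisely what the proposition is meant to establish in this setting.

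The paper avoids this circularity by a comparison-of-long-exact-sequences argument. For any complex $\mc{L}^{\bullet}$, the truncation triangle gives a long exact sequence relating $\check{\h}^{\bullet}(\mc{U},\sigma_{\geq1}\mc{L}^{\bullet})$, $\check{\h}^{\bullet}(\mc{U},\mc{L}^{\bullet})$ and $\check{\h}^{\bullet}(\mc{U},\mc{L}^0)$. Applying this once to $\mc{K}^{\bullet}$ and once (through $\mf{p}=\bar{\mc{X}}\circ\bar{\mf{i}}$) to $\mc{C}_f^{\bullet}$ produces, in total degree $2$, two short exact sequences with the \emph{same} outer terms, since the untruncated hypercohomology and the degree-$0$ piece are identified by the untruncated quasi-isomorphism. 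The $5$-Lemma then forces the middle terms to agree, giving $\check{\h}^2(\mc{U},\sigma_{\geq1}\mc{C}_f^{\bullet})\cong\check{\h}^2(\mc{U},\sigma_{\geq1}\mc{K}^{\bullet})$. A second, independent $5$-Lemma argument (comparing the truncation triangle for $\pi_*\Omega_{T^*X}^{\bullet}$ with the sequence $0\to\bb C_X\to\pi_*\OO_{T^*X}\to\pi_*\OO_{T^*X}/\bb C_X\to 0$) then identifies $\check{\h}^2(\mc{U},\sigma_{\geq1}\pi_*\Omega_{T^*X}^{\bullet})$ with $\check{\h}^2(\mc{U},\Omega_X^{\geq1})$. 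This is the step your proposal is missing: rather than trying to make $\Psi$ commute with truncation, one lets the long exact sequences do the bookkeeping and invokes the $5$-Lemma.
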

\begin{proof} 
We abuse notation by denoting  an open cover of the orbifold $X/G$ and its preimage in the $G$-equivariant topology of $X$ by $\mc{U}$. Let for the sake of generality $\mc{L}^{\bullet}$ denote an arbitrary complex of sheaves. Let $\check{\C}^{\bullet}(\mc{U}, \mc{L}^{\bullet})$ be the \v Cech double complex thereof. Its total complex $T^{\bullet}:=\Tot^{\bullet}\big(\check{C}^{\bullet}(\mc{U}, \mc{L}^{\bullet}\big)$ has a natural decreasing filtration by the second degree
\begin{equation}
\label{filtration}
F^pT^{\bullet}:=\oplus_n\oplus_{\substack{i+j=n\\j\geq p}}\check{C}^i(\mc{U}, \mc{L}^{j})
\end{equation} 
which determines a short exact sequence
\begin{equation}
\label{filtrationses}
0\rightarrow F^{p}T^{\bullet}\rightarrow F^{p-1}T^{\bullet}\rightarrow\frac{F^{p-1}T^{\bullet}}{F^{p}T^{\bullet}}\rightarrow0
\end{equation}
for every $p$. Note the simple but important relation 
\begin{align}
\label{filtrationoftotcomplex}
F^pT^{\bullet}&=\oplus_n\oplus_{j\geq p}^n\check{\C}^{n-j}(\mc{U}, \mc{L}^{j})\nonumber\\
&=\oplus_n\oplus_{j\geq 0}\check{\C}^{n-j}\big(\mc{U}, (\sigma_{\geq p}\mc{L})^{j}\big)\nonumber\\
&=\Tot^{\bullet}\big(\check{\C}^{\bullet}(\mc{U}, \sigma_{\geq p}\mc{L}^{\bullet}\big).
\end{align}  
The above defined filtration \eqref{filtration} applied to the total complex of the \v Cech double complex of $\Big(\oplus_{i,g\in G}j_{i*}^g\pi_{i*}^g\Omega_{T^*X_i^g}^{\bullet-2l_g^i}\Big)^G$ yields in account of \eqref{filtrationoftotcomplex} the short exact sequence of complexes
\begin{align*}
&0\rightarrow\Big(\Tot^{\bullet}(\check{\C}^{\bullet}(\mc{U}, \sigma_{\geq1}\pi_{*}\Omega_{T^*X}^{\bullet}))\Big)^G\oplus\Big(\oplus_{i, g}\Tot^{\bullet}\big(\check{\C}^{\bullet}(\mc{U}, j_{i*}^g\pi_{i*}^g\Omega_{T^*X_i^g}^{\bullet-2l_g^i})\big)\Big)^G\\
&\xrightarrow{\mc{I}}\Big(\oplus_{i, g}\Tot^{\bullet}\big(\check{\C}^{\bullet}(\mc{U}, j_{i*}^g\pi_{i*}^g\Omega_{T^*X_i^g}^{\bullet-2l_g^i})\big)\Big)^G\xrightarrow{\mc{P}}\Big(\check{\C}^{\bullet}(\mc{U}, \pi_*\OO_{T^*X})\Big)^G\rightarrow0.
\end{align*}
In turn it induces a long exact sequence of \v Cech hypercohomology groups 
 \begin{align}
 \label{lesofholomorphicdiffforms}
&\cdots\rightarrow\Big(\check{\h}^1(\mc{U}, \sigma_{\geq1}\pi_*\Omega_{T^*X}^{\bullet})\Big)^G\xrightarrow{\mc{I}_*}\Big(\check{\h}^1(\mc{U}, \pi_*\Omega_{T^*X}^{\bullet})\Big)^G\xrightarrow{\mc{P}_*}\Big(\check{\h}^1(\mc{U}, \pi_*\OO_{T^*X})
\Big)^G\nonumber\\&\xrightarrow{\partial}\Big(\check{\h}^2(\mc{U}, \sigma_{\geq1}\pi_*\Omega_{T^*X}^{\bullet})\oplus_{\codim(X_i^g)=1}\check{\h}^0(\mc{U}, j_{i*}^g\pi_{i*}^g\Omega_{T^*X_i^g}^{\bullet})\Big)^G\nonumber\\
&\xrightarrow{\mc{I}_*}\Big(\check{\h}^2(\mc{U}, \pi_*\Omega_{T^*X}^{\bullet})\oplus_{\codim(X_i^g)=1}\check{\h}^0(\mc{U}, j_{i*}^g\pi_{i*}^g\Omega_{T^*X_i^g}^{\bullet})\Big)^G\xrightarrow{\mc{P}_*}\Big(\check{\h}^2(\mc{U}, \pi_*\OO_{T^*X})\Big)^G\rightarrow\cdots
 \end{align}
where $\partial$ denotes the so-called connecting morphism. A decomposition in terms of short exact sequences yields in degree $2$ the following short exact sequence
\begin{align}
\label{ses1}
&0\rightarrow\frac{\check{\h}^{1}(\mc{U}, \pi_*\OO_{T^*X})^G}{\Ima(\mc{P}_*)}\xrightarrow{\partial}\check{\h}^2(\mc{U}, \sigma_{\geq1}\pi_*\Omega_{T^*X}^{\bullet})^G\oplus(\oplus_{g, i}\check{\h}^0(\mc{U}, j_{i*}^g\pi_{i*}^g\Omega_{T^*X_i^g}^{\bullet}))^G\xrightarrow{\mc{I}_*}\ker(\mc{P}_*)\rightarrow0.
\end{align}
In a similar fashion the short exact sequence $0\rightarrow\bb C_X\xrightarrow{i}\pi_*\OO_{T^*X}\xrightarrow{p}\pi_*\OO_{T^*X}/\bb C_X\rightarrow0$ induces the long exact sequence
\begin{align*}
\cdots\rightarrow\check{\h}^1(\mc{U}, \bb C_X)&\xrightarrow{i_*}\check{\h}^1(\mc{U}, \pi_*\OO_{T^*X})\xrightarrow{p_*}\check{\h}^1(\mc{U}, \pi_*\OO_{T^*X}/\bb C_X)\xrightarrow{\partial}\check{\h}^2(\mc{U}, \bb C_X)\\&\xrightarrow{i_*}\check{\h}^2(\mc{U}, \pi_*\OO_{T^*X})\xrightarrow{p_*}\check{\h}^2(\mc{U}, \pi_*\OO_{T^*X}/\bb C_X)\rightarrow\cdots
\end{align*}
in which by abuse of notation $\partial$ again denotes the connecting morphism. It induces the short exact sequence
\begin{align}
\label{ses2}
&0\rightarrow\frac{\check{\h}^1(\mc{U}, \pi_*\OO_{T^*X})}{\Ima(i_*)}\xrightarrow{p_*}\check{\h}^1(\mc{U}, \pi_*\OO_{T^*X}/\bb C_X)\xrightarrow{\partial}\ker{i_*}\rightarrow0.
\end{align}
The pullback of the zero section $s_0: X\rightarrow T^*X$ defines a cochain map of sheaves $s_0^*: \pi_*\Omega_{T^*X}^{\bullet}\rightarrow\Omega_{X}^{\bullet}$. By definition, $s_0^*\circ\pi^*=\id$. On the other hand a holomorphic homotopy operator $K: \pi_*\Omega_{T^*X}^p\rightarrow\Omega_X^{p-1}$ can be constructed following verbatim Chapter $4$ in \cite{BT82} by means of which it can be shown that $\pi^*\circ s_0^*$ is cochain homotopic to the identity of $\pi_*\Omega_{T^*X}^{\bullet}$. Thus, $s_0^*$ is a quasi-isomorphism. Ergo, $\check{\h}^{\bullet}(\mc{U}, \pi_*\Omega_{T^*X}^{\bullet})\cong\check{\h}^{\bullet}(\mc{U}, \Omega_{X}^{\bullet})$. Consequently, 
from the fact that $\Omega_X^{\bullet}$ is a resolution of $\bb C_X$ we infer that $\Ima(\mc{P_*})\cong\Ima(i_*)$ and $\ker(\mc{P}_*)\cong\ker(i_*)$, respectively. 
The isomorphism $\pi_*\OO_{T^*X}/\bb C_X\cong\pi_*\Omega_{T^*X, \cl}^1$
yields a morphism of \v Cech cochain complexes $\kappa: \check{\C}^{\bullet}(\mc{U}, \pi_*\OO_{T^*X}/\bb C_X)\rightarrow\Tot^{\bullet}\check{\C}^{\bullet}(\mc{U}, \sigma_{\geq1}\pi_*\Omega_{T^*X}^{\bullet})[1]$, given by \[\underline{f \mod \bb C}\mapsto(-1)^nd_{dR}(\underline{f})\]
for every $\underline{f\mod\bb C}\in\check{\C}^n(\mc{U}, \pi_*\OO_{T^*X}/\bb C_X)$. Indeed, let $D$ be the differential in the total complex $\Tot^{\bullet}\check{\C}^{\bullet}(\mc{U}, \pi_*\Omega_{T^*X}^{\bullet})$. Then, 
\[\bar{D}_n:=(-1)^1D_n[1]:=\sum_{p+q=n+1}-\delta^{p,q}-(-1)^pd_{dR}^{p,q}\] 
is the differential in degree $n$ of the shifted total complex $\Tot^{\bullet}\check{\C}^{\bullet}(\mc{U}, \sigma_{\geq1}\pi_*\Omega_{T^*X}^{\bullet})[1]$. Then, for every $\underline{f\mod\bb C}\in\check{\C}^n(\mc{U}, \pi_*\OO_{T^*X}/\bb C_X)$, %
we have
\begin{align*}
\label{problem}
\kappa(\delta_n(\underline{f\mod \bb C}))&=(-1)^{n+1}d_{dR}(\delta_n(\underline{f}))\nonumber\\
&\hspace{-2em}\underbrace{=}_{\delta d_{dR}-d_{dR}\delta=0}\hspace{-2em}-\delta_n((-1)^nd_{dR}(\underline{f}))\nonumber\\
&=-(\delta_n+(-1)^{n}d_{dR})(-1)^nd_{dR}(\underline{f})\nonumber\\
&=\bar{D}_{n}\big(\kappa(\underline{f\mod \bb C})\big).
\end{align*}  
Subsequently, we fuse the short exact sequence \eqref{ses1} for the special case $G=\{\id_G\}$ with the short exact sequence \eqref{ses2} in the following commutative diagram  
\begin{equation*}
 \begin{tikzcd}[row sep=tiny]
&& \check{\h}^{1}(\mc{U}, \pi_*\OO_{T^*X}/\bb C_X) \arrow[dd, "\kappa_*"]\arrow[dr, "\partial"] \\
 0\arrow[r]&\frac{\check{\h}^1(\mc{U}, \pi_*\OO_{T^*X})}{\Ima(i_*)} \arrow[ur, "p_*"] \arrow[dr, "\partial"] & &\ker(i_*)\arrow[r]&0. \\
&&\check{\h}^2(\mc{U}, \sigma_{\geq1}\pi_*\Omega_{T^*X}^{\bullet})\arrow[ur, "\mc{I}_*"]
        \end{tikzcd}
\end{equation*}
Indeed, given an element $\underline{f}\in\check{\C}^1(\mc{U}, \pi_*\OO_{T^*X})$, for every $\underline{\alpha}^{01}\in\check{\C}^0(\mc{U}, \pi_*\Omega_{T^*X}^1)$, the cochain $\underline{\alpha}=\underline{f}+\underline{\alpha}^{01}\in\Tot^1\check{\C}^{\bullet}(\mc{U}, \pi_*\Omega_{T^*X}^{\bullet})$ satisfies $\mc{P}_1(\underline{\alpha})=\underline{f}$. Then, we have
\begin{align*}
\kappa_{*}\circ p_{*}([\underline{f}])&=\kappa_*([\underline{f \mod \bb C}])\\
&=[d_{dR}(\underline{f})]\\
&=[d_{dR}(\underline{f})+D_1(\underline{\alpha}^{01})]\\
&=[\mc{I}_2^{-1}\circ D_1(\underline{\alpha})]\\
&=\partial[\underline{f}].
\end{align*} 
Similarly, by virtue of the above and the isomorphism $\check{\h}^1(\mc{U}, \pi_*\Omega_{T^*X}^{\bullet})\cong\check{\h}^1(\mc{U}, \bb C_X)$, we have 
\begin{align*}
\mc{I}_*\circ\kappa_*([\underline{f \mod \bb C}])&=[D_1(\underline{\alpha})]\\
&=\partial[\underline{f}].
\end{align*} 
By the $5$-Lemma the linear morphism $\kappa_*$ is in fact a linear isomorphism. This coupled to the fact that $\pi_*\OO_{T^*X}/\bb C_X\rightarrow\pi_*\Omega_{T^*X}^{\geq1}[1]$ is a resolution of $\pi_*\OO_{T^*X}/\bb C_X$ implies consequently
 \begin{equation}
 \label{firsttruncatediso}
 \check{\h}^2(\mc{U}, \sigma_{\geq1}\pi_*\Omega_{T^*X}^{\bullet})\cong\check{\h}^1(\mc{U}, \pi_*\OO_{T^*X}/\bb C_X)\cong\check{\h}^2(\mc{U}, \pi_*\Omega_{T^*X}^{\geq1})\cong\check{\h}^2(\mc{U}, \Omega_{X}^{\geq1}).
 \end{equation} 
The morphisms $\mc{X}$ and $\mc{X}_{\geq1}:=\sigma_{\geq1}(\mc{X})$ induce correspondingly a quasi-isomorphism 
 \begin{equation}
 \label{totcech}
 \bar{\mc{X}}: \Tot^{\bullet}\check{\C}^{\bullet}\big(\mc{U}, \mc{C}^{\bullet}(\mc{D}_X\rtimes G, \mc{D}_X\rtimes G)\big)\rightarrow\Big(\oplus_{i, g}\Tot^{\bullet}\big(\check{\C}^{\bullet}(\mc{U}, j_{i*}^g\pi_{i*}^g\Omega_{T^*X_i^g}^{\bullet-2l_g^i})\big)\Big)^G
 \end{equation}
and a morphism\footnote{The brutally truncated map $\bar\chi_{\geq1}$ is not a quasi-isomorphism in general.}  
 \begin{equation*}
 \label{truncatedtotcech}
 \bar{\mc{X}}_{\geq1}: \Tot^{\bullet}\check{\C}^{\bullet}\big(\mc{U}, \sigma_{\geq1}\mc{C}^{\bullet}(\mc{D}_X\rtimes G, \mc{D}_X\rtimes G)\big)\rightarrow\Big(\oplus_{i, g}\Tot^{\bullet}\big(\check{\C}^{\bullet}(\mc{U}, \sigma_{\geq1}j_{i*}^g\pi_{i*}^g\Omega_{T^*X_i^g}^{\bullet-2l_g^i})\big)\Big)^G.
 \end{equation*}
 
 Let $K^{\bullet\bullet}$ 
be the \v Cech double complex of $\mc{C}_f^{\bullet}(\mc{D}_X\rtimes G, \mc{D}_X\rtimes G)$, associated to the cover $\mc{U}$, 
in which $\delta$ denotes the \v Cech differential, $d$ is the standard Hochschild differential with $\delta d-d\delta=0$. By definition, we have $\mc{C}_f^0(\mc{D}_X\rtimes G, \mc{D}_X\rtimes G)=\sHom_{\bb C}(\bb C_X, \OO_X\rtimes G)\cong\OO_X\rtimes G$. The total complex $\Tot^{\bullet}(K^{\bullet\bullet})$ has a differential $D'=\delta+(-1)^pd$ in bidegree $(p, q)$.

The natural filtration \eqref{filtration} of $\Tot^{\bullet}(K^{\bullet\bullet})$ yields in accordance with \eqref{filtrationses} and \eqref{filtrationoftotcomplex} the short exact sequence
\begin{equation}
\label{sesofdsmashg}
0\rightarrow\Tot^{\bullet}\Big(\check{\C}^{\bullet}\big(\mc{U}, \sigma_{\geq1}\mc{C}_f^{\bullet}(\mc{D}_X\rtimes G, \mc{D}_X\rtimes G)\big)\Big)\xrightarrow{\mc{I}'_*}\Tot^{\bullet}(K^{\bullet\bullet})\xrightarrow{\mc{P}'_*}\check{\C}^{\bullet}(\mc{U}, \OO_{X}\rtimes G)\rightarrow0.
\end{equation}
The canonical quasi-isomorphism $\mf{i}: \mc{C}_f^{\bullet}(\mc{D}_X\rtimes G, \mc{D}_X\rtimes G)\hookrightarrow\mc{C}^{\bullet}(\mc{D}_X\rtimes G, \mc{D}_X\rtimes G)$ from Thereom \ref{caninclqiso} induces a natural inclusion of \v Cech double complexes 
\begin{equation}
\label{inclofcdc}
\bar{\mf{i}}: \check{\C}^{\bullet}(\mc{U}, \mc{C}_f^{\bullet}(\mc{D}_X\rtimes G, \mc{D}_X\rtimes G))\hookrightarrow\check{\C}^{\bullet}(\mc{U}, \mc{C}^{\bullet}(\mc{D}_X\rtimes G, \mc{D}_X\rtimes G))
\end{equation} 
which yields an isomorphism at the level of \v Cech hypercohomologies. Combining \eqref{inclofcdc} with the quasi-isomorphism \eqref{totcech}, we obtain the  maps
\begin{align*}
&\mf{p}:=\bar{\mc{X}}\circ\bar{\mf{i}}, \quad\mf{p}_{\geq1}:=
\bar{\mc{X}}_{\geq1}\circ\bar{\mf{i}}
\end{align*}
With the help of $\mf{p}$ we arrive from  \eqref{sesofdsmashg} at the long exact sequence of \v Cech hypercohomology groups 
\begin{align*}
&\cdots\rightarrow\check{\h}^1(\mc{U}, \sigma_{\geq1}\mc{C}_f^{\bullet}(\mc{D}_X, \mc{D}_X))\xrightarrow{\mc{I}'_*}\check{\h}^1(\mc{U}, \pi_*\Omega_{T^*X}^{\bullet})^G\xrightarrow{\mc{P}'_*}\check{\h}^1(\mc{U}, \pi_*\OO_{T^*X})^G\\
&\xrightarrow{\partial'}\check{\h}^2(\mc{U}, \sigma_{\geq1}\mc{C}_f^{\bullet}(\mc{D}_X\rtimes G, \mc{D}_X\rtimes G)\xrightarrow{\mc{I}'_*}\Big(\check{\h}^2(\mc{U}, \pi_*\Omega_{T^*X}^{\bullet})\hspace{-1em}\bigoplus_{\codim(X_i^g)=1}\hspace{-1em}\check{\h}^0(\mc{U}, j_{i*}^g\pi_{i*}^g\Omega_{T^*X_i^g}^{\bullet})\Big)^G\\
&\xrightarrow{\mc{P}'_*}\check{\h}^2(\mc{U}, \pi_*\OO_{T^*X})^G\rightarrow\cdots
\end{align*}
Comparison between the long exact sequence \eqref{lesofholomorphicdiffforms} and \eqref{sesofdsmashg} delivers  $\ker(\mc{P}_*)=\ker(\mc{P}'_*)$. The last fact allows us to extract   from the above long exact sequence the following short exact sequence
\begin{align}
\label{ses3}
&0\rightarrow\frac{\Big(\check{\h}^{1}(\mc{U}, \pi_*\OO_X)\Big)^G}{\Ima(\mc{P}_*)}\xrightarrow{\partial'}\check{\h}^2(\mc{U}, \sigma_{\geq1}\mc{C}_f^{\bullet}(\mc{D}_X\rtimes G, \mc{D}_X\rtimes G))\xrightarrow{\mc{I}'_*}\ker(\mc{P}_*)\rightarrow0.
\end{align}
By virtue of these morphism we can now merge the short exact sequences \eqref{ses1} and \eqref{ses3} in the ensuing diagram  
 \begin{equation}
 \label{fundamentaldiagram}
 \begin{tikzcd}[row sep=tiny]
&& M\arrow[dd, "\mf{p}_{\geq1*}"]\arrow[dr, "\mc{I}'_*"] \\
 0\arrow[r]& \frac{\check{\h}^1(\mc{U}, \pi_*\OO_{T^*X})^G}{\Ima(\mc{P}_*)} \arrow[ur, "\partial'"] \arrow[dr, "\partial\circ\mf{p}"] & &\ker(\mc{P}_*)\arrow[r]&0.           \\
&&N\arrow[ur, "\mc{I}_*"]
        \end{tikzcd}
\end{equation} 
where 
\begin{align*}
M&:=\check{\h}^{2}(\mc{U}, \sigma_{\geq1}\mc{C}_f^{\bullet}(\mc{D}_X\rtimes G, \mc{D}_X\rtimes G)),\quad N:=\big(\check{\h}^2(\mc{U}, \sigma_{\geq1}\pi_*\Omega_{T^*X}^{\bullet})\hspace{-0.4em}\bigoplus_{\codim(X_i^g)=1}\hspace{-0.4em}\check{\h}^0(\mc{U}, j_{i*}^g\pi_{i*}^g\Omega_{T^*X_i^g}^{\bullet})\big)^G.
\end{align*}
In order to show that diagram \eqref{fundamentaldiagram} commutes, consider the diagram
\begin{equation}
\small\small\begin{tikzcd}
\label{mapofexactsequences}
0\arrow[r]&\Tot^{\bullet}\check{\C}^{\bullet}(\mc{U},  \sigma_{\geq1}\mc{C}_f^{\bullet}(\mc{D}_X, \mc{D}_X))\arrow[d, "\mf{p}_{\geq1}"]\arrow[r, "\mc{I}'"]&\Tot^{\bullet}(K^{\bullet\bullet})\arrow[d, "\mf{p}"]\arrow[r, "\mc{P}'"]&K^{\bullet, 0}\arrow[d, "\mf{p}"]\arrow[r]&0\\
0\arrow[r]&\Tot^{\bullet}\check{\C}^{\bullet}\big(\mc{U}, \big(\oplus_{i,g\in G}\sigma_{\geq1}j_{i*}^g\pi_{i*}^g\Omega_{T^*X_i^g}^{\bullet-2l_g^i}\big)^G\big)\arrow[r, "\mc{I}"]&\Tot^{\bullet}L^{\bullet\bullet}\arrow[r, "\mc{P}"]&\check{\C}^{\bullet}(\mc{U}, \pi_*\OO_{T^*X})^G\arrow[r]&0.
\end{tikzcd}
\end{equation}
with $L^{\bullet\bullet}:=\check{\C}^{\bullet}\big(\mc{U}, (\oplus_{i,g\in G}j_{i*}^g\pi_{i*}^g\Omega_{T^*X_i^g}^{\bullet-2l_g^i})^G\big)$ where both horizontal diagrams are short exact sequences. The left-hand square of the diagram commutes because $\mc{I}$ and $\mc{I}'$ are the canonical inclusions. From this we immediately infer that the right-hand square of diagram \eqref{fundamentaldiagram} is commutative, too. On the other hand the right-hand square in diagram \eqref{mapofexactsequences} commutes, since for any $\underline{\beta}\in\Tot^{\bullet}(K^{\bullet\bullet})$, we have
\begin{align}
\label{rhsquare}
\mf{p}\circ\mc{P}'(\underline{\beta})&=\mf{p}(\underline{\beta} \mod \Ima(\mc{I}'))\nonumber\\
&=\mf{p}(\underline{\beta}) \mod \Ima(\mf{p}\circ\mc{I}')\nonumber\\
&=\mf{p}(\underline{\beta}) \mod \Ima(\mc{I}\circ\mf{p}_{\geq1})\nonumber\\
&=\mc{P}\circ\mf{p}(\underline{\beta}).
\end{align}
where in the third line we used the commutativity of the left square in Diagram \eqref{mapofexactsequences}. Finally, let the cohomology class $[\underline{f}]$ be an arbitrary representative in $\frac{\check{\h}^1(\mc{U}, \pi_*\OO_{T^*X})^G}{\Ima(\mc{P}_*)}$. There is an element $\underline{\alpha}\in\Tot^{\bullet}(K^{\bullet\bullet})$ such that $\mc{P}'_1(\underline{\alpha})=\underline{f}$. Then, invoking the definition of the connecting morphism $\partial'$, we obtain 
\begin{align*}
\mf{p}_{\geq1*}\circ\partial'([\underline{f}])&=\mf{p}_{\geq1*}\circ[\mc{I}_2'^{-1}D'_1(\underline{\alpha})]\\
&=[\mf{p}_{\geq1}\circ D'_1(\underline{\alpha})]\\
&=[D_1(\mf{p}_{\geq1}(\underline{\alpha}))]\\
&=[\mc{I}_2^{-1}\circ D_1(\mf{p}_{\geq1}(\underline{\alpha}))]\\
&=\partial\circ\mf{p}([\underline{f}])
\end{align*}
where in the second line we used that $\mc{I}'$ is the canonical inclusion map and  $D'_1(\underline{\alpha})\in\Ima(\mc{I}'_2)$\footnote{Since $f$ is by definition a cocycle, $D_1(\underline{\alpha})\in\ker(\mc{P}'_2)=\Ima(\mc{I}'_2)$.}, in the third the fact that $\mf{p}_{\geq1}$ is a map of cocoain complexes and in the last line we used Equality \eqref{rhsquare}. We conclude that the left-hand side of diagram \eqref{fundamentaldiagram} commutes,  whence the whole diagram is commutative. Then by the $5$-Lemma, the map $\mf{p}_{\geq1*}$ is a linear isomorphism. This combined with Isomorphism \eqref{firsttruncatediso} proves the claim. 
\end{proof}
Theorem \ref{filtdeformationspace} coupled with Proposition \ref{scctrhcc} yields the following important isomorphism.
\begin{corollary}
\label{spaceoffiltdefs}
There is an isomorphism of $\bb C$-vector spaces 
\[\sDef(\mc{D}_X\rtimes G)_f\cong\bb H^{2}(X, \Omega_X^{\geq1})^G\oplus\big(\bigoplus\limits_{\codim_{\bb C}(X_i^g)=1}\hspace{-1em}\h^0(X_i^g, \bb C)\big)^G.\] 
\end{corollary}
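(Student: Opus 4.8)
The plan is to read the corollary off directly from the two quoted results, by first replacing the abstract hypercohomology group of Theorem~\ref{filtdeformationspace} with the explicit direct sum of Proposition~\ref{scctrhcc}, and then identifying each summand of that sum with the cohomology groups appearing in the statement. I would begin with the definition $\sDef(\mc{D}_X\rtimes G)_f := \exal_f(\mc{D}_X\rtimes G, \mc{D}_X\rtimes G)$ introduced after the proof of Theorem~\ref{filtdeformationspace}, and then apply Theorem~\ref{filtdeformationspace} in the case $\Lambda = \mc{A} = \mc{D}_X\rtimes G$. This yields
\[
\sDef(\mc{D}_X\rtimes G)_f \cong \bb H^2\big(X, \sigma_{\geq1}\mc{C}_f^{\bullet}(\mc{D}_X\rtimes G, \mc{D}_X\rtimes G)\big),
\]
reducing the claim to a computation of the right-hand hypercohomology group.

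Next I would pass from \v Cech to genuine hypercohomology. Proposition~\ref{scctrhcc} computes $\check{\h}^2$ with respect to a fixed cover $\mc{U}$, so I would take the direct limit over refinements of $\mc{U}$; since $X$ is paracompact this limit recovers honest hypercohomology, exactly as in the closing step of the proof of Theorem~\ref{filtdeformationspace}. Because direct limits are exact and, $G$ being finite, the functor $(-)^G$ is exact over $\bb C$ and commutes with these limits, the isomorphism of Proposition~\ref{scctrhcc} survives in the limit and gives
\[
\bb H^2\big(X, \sigma_{\geq1}\mc{C}_f^{\bullet}(\mc{D}_X\rtimes G, \mc{D}_X\rtimes G)\big) \cong \Big(\bb H^2(X, \Omega_X^{\geq1}) \oplus \hspace{-1em}\bigoplus_{\codim(X_i^g)=1}\hspace{-1em} \bb H^0\big(X, j_{i*}^g\pi_{i*}^g\Omega_{T^*X_i^g}^{\bullet}\big)\Big)^G.
\]
It then remains only to identify the transverse summands. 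For these I would invoke the quasi-isomorphism $s_0^*$ established in the proof of Proposition~\ref{scctrhcc}, which shows $\pi_{i*}^g\Omega_{T^*X_i^g}^{\bullet} \simeq \Omega_{X_i^g}^{\bullet}$; since $j_{i*}^g$ is exact, being pushforward along the closed embedding $j_i^g$, this gives $\bb H^0(X, j_{i*}^g\pi_{i*}^g\Omega_{T^*X_i^g}^{\bullet}) \cong \bb H^0(X_i^g, \Omega_{X_i^g}^{\bullet}) \cong \h^0(X_i^g, \bb C)$, the last step because the holomorphic de Rham complex $\Omega_{X_i^g}^{\bullet}$ resolves the constant sheaf $\bb C_{X_i^g}$. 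Feeding these identifications back, together with the self-evident $\bb H^2(X,\Omega_X^{\geq1})$ already appearing in the first summand, produces precisely the asserted isomorphism.

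The genuinely substantive point, as opposed to the purely formal first two steps, is the identification of the degree-$0$ transverse summands $\bb H^0(X, j_{i*}^g\pi_{i*}^g\Omega_{T^*X_i^g}^{\bullet}) \cong \h^0(X_i^g, \bb C)$: here one must use both that the holomorphic de Rham complex computes constant-sheaf cohomology and that the cotangent projection $\pi_i^g$ is a homotopy equivalence, so that the connected-component count of $T^*X_i^g$ agrees with that of $X_i^g$. A secondary check, which I expect to be routine, is that the direct-limit passage to hypercohomology is compatible with the connecting morphisms and the $5$-lemma splitting used inside Proposition~\ref{scctrhcc}; since every isomorphism constructed there is natural in the cover $\mc{U}$ and $G$-equivariant, this compatibility — and hence the commutation of the whole argument with $(-)^G$ — is automatic.
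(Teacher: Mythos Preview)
Your proposal is correct and follows essentially the same route as the paper: combine Theorem~\ref{filtdeformationspace} with Proposition~\ref{scctrhcc}, pass from \v Cech to genuine hypercohomology by a direct limit over covers, and then identify each summand $\bb H^0(X, j_{i*}^g\pi_{i*}^g\Omega_{T^*X_i^g}^{\bullet})$ with $\h^0(X_i^g,\bb C)$ using that $\pi_{i*}^g\Omega_{T^*X_i^g}^{\bullet}\simeq\Omega_{X_i^g}^{\bullet}$ resolves $\bb C_{X_i^g}$. The paper's own proof singles out this last identification as the only nontrivial step and carries it out in the same way, routing through the smooth de Rham complex $\mc{A}_{X_i^g,\bb C}^{\bullet}$ rather than invoking $s_0^*$ directly, but this is a cosmetic difference.
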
 
\begin{proof}
The only non-trivial part in the statement is the identification of 
$\check{\h}^0(X, j_{i*}^g\pi_{i*}^g\Omega_{T^*X_i^g}^{\bullet})$ with $\h^0(X_i^g, \bb C)$. We demonstrate that now. Let $\mc{U}$ denote a $G$-invariant open cover of $X$ and let the notation $X_i^g\cap\mc{U}$ on $X_i^g$ denote the induced open cover on $X_i^g$. With this, we have 
\begin{align*}
\check{\h}^{\bullet}(X, j_{\langle g\rangle*}\pi_{i*}^g\Omega_{T^*X_i^g}^{\bullet})&=\colim_{\mc{U}}\check{\h}^{\bullet}(\mc{U}, j_{\langle g\rangle*}\pi_{i*}^g\Omega_{T^*X_i^g}^{\bullet})\\
&=\colim_{\mc{U}}\check{\h}^{\bullet}(X_i^g\cap\mc{U}, \pi_{i*}^g\Omega_{T^*X_i^g}^{\bullet})\\
&\cong\colim_{\mc{U}}\check{\h}^{\bullet}(X_i^g\cap\mc{U}, \mc{A}_{X_i^g, \bb C}^{\bullet})\\
&\cong\bb{H}^{\bullet}(X_i^g,  \mc{A}_{X_i^g, \bb C}^{\bullet})\\
&\cong\h^{\bullet}(X_i^g, \bb C)
\end{align*}
where in the third isomorphism we applied Poincare's Lemma and the fact that $ \mc{A}_{X_i^g}^{\bullet}$ and $ \Omega_{X_i^g}^{\bullet}$ are both resolutions of $\bb C$. 
\end{proof}
We arrive at the main theorem of the paper. 
\begin{theorem}
\label{univfiltformdefthm}
Let $X$ be a smooth algebraic variety or a smooth analytic variety equipped with a finite subgroup $G\subset\Aut(X)$ acting faithfully on $X$. The sheaf of twisted Cherednik algebras $\mc{H}_{1, c, \psi, X, G}$ on the quotient orbifold $X/G$ with formal $c$ and $\psi$ is a universal formal filtered deformation of the sheaf of filtered skew-group algebra $\mc{D}_X\rtimes G$. 
\end{theorem}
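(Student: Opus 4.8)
The plan is to identify the formal parameter space of $\mc{H}_{1, c, \psi, X, G}$ with the space of filtered first-order deformations $\sDef(\mc{D}_X \rtimes G)_f$ computed in Corollary \ref{spaceoffiltdefs}, to exhibit the sheaf of Cherednik algebras as a flat filtered formal deformation over that space, and finally to upgrade the resulting first-order isomorphism to a statement of universality. First I would recall that the formal parameter space of $\mc{H}_{1, c, \psi, X, G}$ decomposes as the space of class functions $c$ on the set of complex reflections of $G$ together with the class $\psi \in \bb{H}^2(X, \Omega_X^{\geq 1})^G$. Since by definition a complex reflection is an element $g$ whose fixed-point locus has a connected component $X_i^g$ of codimension $1$, specifying a class function on reflections amounts to a $G$-invariant assignment of a scalar to each such component, that is, an element of $\big(\bigoplus_{\codim_{\bb C}(X_i^g)=1} \h^0(X_i^g, \bb C)\big)^G$. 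Hence the parameter space is canonically isomorphic to
\[
\bb{H}^2(X, \Omega_X^{\geq 1})^G \oplus \Big(\bigoplus_{\codim_{\bb C}(X_i^g)=1} \h^0(X_i^g, \bb C)\Big)^G,
\]
which by Corollary \ref{spaceoffiltdefs} is precisely $\sDef(\mc{D}_X \rtimes G)_f$.

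Next I would verify that $\mc{H}_{1, c, \psi, X, G}$ with formal $c, \psi$ is a genuine filtered formal deformation of $\mc{D}_X \rtimes G$ over the complete local base $R := \widehat{\Sym}\Big(\big(\sDef(\mc{D}_X \rtimes G)_f\big)^{\vee}\Big)$, specializing to $\mc{D}_X \rtimes G$ at $c = \psi = 0$. The essential input is the PBW-type flatness of the sheaf of Cherednik algebras over the full base: its associated graded is $\Sym^{\bullet}(\mc{T}_X) \rtimes G$ base-changed to $R$, independent of the parameters, so that $\mc{H}_{1, c, \psi, X, G}$ is $R$-flat with the prescribed special fibre while respecting the order filtration. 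Thus it constitutes a filtered formal deformation over the smooth base $\Spf(R)$, and in particular the deformation is unobstructed. Differentiating the family at the origin produces the Kodaira--Spencer map from the tangent space of $\Spf(R)$ to $\sDef(\mc{D}_X \rtimes G)_f = \exal_f(\mc{D}_X \rtimes G, \mc{D}_X \rtimes G)$; reading off the linear terms of the commutation relations, one checks that it carries $\psi$ to its class in the first summand and $c$ to the corresponding tuple of scalars in the second, hence coincides with the identity under the identification above and is in particular an isomorphism.

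Finally I would deduce universality. By Theorem \ref{filtdeformationspace} and the definition $\sDef(\mc{D}_X \rtimes G)_f = \exal_f(\mc{D}_X \rtimes G, \mc{D}_X \rtimes G)$, the filtered first-order deformations are classified by $\sDef(\mc{D}_X \rtimes G)_f$, and the bijectivity of Kodaira--Spencer says that each such class is realized exactly once inside the Cherednik family. Given an arbitrary filtered formal deformation of $\mc{D}_X \rtimes G$ over a complete local $\bb{C}$-algebra $S$, I would construct the classifying local homomorphism $R \to S$ order by order: its first-order part is fixed by the inverse of Kodaira--Spencer, while at each higher order the obstruction to extending the map lies in a filtered analogue of $\bb{H}^3(\mc{D}_X \rtimes G, \mc{D}_X \rtimes G)$ and is annihilated by the existence of the flat Cherednik family over all of $\Spf(R)$. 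Injectivity of Kodaira--Spencer then forces the lift at each stage to be unique, so that every filtered formal deformation is induced from $\mc{H}_{1, c, \psi, X, G}$ along a unique morphism of the base; equivalently, $R$ pro-represents the filtered deformation functor and $\mc{H}_{1, c, \psi, X, G}$ is its universal object.

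The hard part will be this last step, namely upgrading the first-order isomorphism to genuine universality, that is, pro-representability rather than mere versality of the Cherednik family. The two delicate points are to make the inductive lifting of the classifying morphism precise within the filtered square-zero extension formalism of Theorem \ref{filtdeformationspace}, and to rule out nontrivial infinitesimal automorphisms of $\mc{D}_X \rtimes G$ that could spoil uniqueness of the classifying map. In Etingof's affine setting both issues are bypassed by $D$-affineness, which reduces the whole question to the classical deformation theory of the associative algebra $\Gamma(X, \mc{D}_X \rtimes G)$; in the present non-affine situation there is no such reduction, and one must instead argue sheaf-theoretically, leaning on the quasi-isomorphisms of Theorem (B) and the finite-dimensionality of the relevant cohomology sheaves established in Proposition \ref{caninclqiso} and Corollary \ref{spaceoffiltdefs}.
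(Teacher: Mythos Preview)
Your approach is essentially the paper's, but considerably more conscientious. The paper's entire proof reads: ``The claim follows from the fact that the dimension of the parameter space $\{(\psi, c)\}$ is the same as the dimension of $\sDef(\mc{D}_X\rtimes G)_f$.'' That is, the paper stops exactly at your first paragraph: it matches the Cherednik parameter space against the description in Corollary~\ref{spaceoffiltdefs} and declares the theorem proved.

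Everything you add beyond that---the PBW/flatness check, the Kodaira--Spencer argument, the order-by-order lifting, the worry about infinitesimal automorphisms and pro-representability versus versality---is not in the paper. You are right that a bare dimension count does not by itself yield universality: one needs at minimum that the Kodaira--Spencer map is an isomorphism (not just a map between spaces of equal dimension), and some control over automorphisms or an unobstructedness/smoothness argument to propagate to all orders. The paper leaves all of this implicit, presumably regarding it as standard once the first-order space has been identified; your proposal makes the hidden steps explicit and correctly flags where the genuine work would lie in a fully rigorous treatment.
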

\begin{proof}
The claim follows from the fact that the dimension of the parameter space $\{(\psi, c)\}$ is the same as the dimension of $\sDef(\mc{D}_X\rtimes G)_f$. 
\end{proof}
\section*{Acknowledgement}
I would like to express my gratitude to the Department of Mathematics at MIT for the excellent research conditions which enabled this work at first place. I would like to extend my sincere gratitude to Prof. Pavel Etingof for many valuable comments and suggestions which he generously provided in the course of my work on this note and which substantially improved its quality. In particular, I thank him for giving me the idea to use filtration preserving Hochschild cochains- a trick which "unlocked" the central problem. I also thank Prof. Giovanni Felder, Prof. Ajay Ramadoss,  Prof. Valery Lunts and Dr. Konstantin Jacob for a useful exchange and valuable comments.  This work is financially supported by a \emph {Swiss National Science Foundation Early Postdoc.Mobility Fellowship} number 188014.     
\printbibliography
\vspace{5ex}
\textsc{Department of Mathematics, Massachusetts Institute of Technology, 77 Massachusetts Ave.,\\ Cambridge, MA 02139, USA}\\
\\
\textsc{E-mail address}: \textit{avitanov@mit.edu}

\end{document}